\newcommand{\R}{\mathbb{R}}
\newcommand{\SO}{\mathrm{SO}}
\newcommand{\h}{\mathbb{H}}
\newcommand{\PSL}{\mathbb{P}\mathrm{SL}}
\newcommand{\Sp}{\mathrm{Sp}}
\newcommand{\T}{\mathcal{T}}
\newcommand{\Pp}{\mathbb{P}}
\newcommand{\Ind}{\mathrm{Ind}}
\newcommand{\SL}{\mathrm{SL}}
\newcommand{\Mix}{\mathrm{Mix}}
\newcommand{\Flat}{\mathrm{Flat}}
\newcommand{\Area}{\mathrm{Area}}
\newcommand{\sys}{\mathrm{sys}}
\newcommand{\diag}{\mathrm{diag}}
\renewcommand{\Re}{\operatorname{Re}}
\newtheorem{defi}{Definition}[section] 
\newtheorem{thm}[defi]{Theorem}
\newtheorem{cor}[defi]{Corollary} 
\newtheorem{lemma}[defi]{Lemma}
\newtheorem{prop}[defi]{Proposition}
\newtheorem{bigthm}{Theorem}
\theoremstyle{definition}
\newtheorem{rmk}{Remark}[section]
\DeclareMathAlphabet{\mathpzc}{OT1}{pzc}{m}{it}
\numberwithin{equation}{section}
\begin{document}




\title[Compactification $\SO_{0}(2,3)$-Hitchin component]{Length spectrum compactification of the $\SO_{0}(2,3)$-Hitchin component}
\author{Charles Ouyang and Andrea Tamburelli}

\begin{abstract}
We find a compactification of the $\SO_{0}(2,3)$-Hitchin component by studying the degeneration of the induced metric on the unique equivariant maximal surface in the 4-dimensional pseudo-hyperbolic space $\h^{2,2}$. In the process, we establish the closure in the space of projectivized geodesic currents of the space of flat metrics induced by holomorphic quartic differentials on a Riemann surface. As an application, we describe the behavior of the entropy of Hitchin representations along rays of quartic differentials.
\end{abstract}

\maketitle
\setcounter{tocdepth}{1}
\tableofcontents

\section*{Introduction and Main Results}

This article is the third in a series of papers (\cite{Charles_dPSL}, \cite{OT}) on the study of length spectrum compactifications of special connected components of representation varieties, called Hitchin components. These were first discovered by Hitchin using Higgs bundle techniques (\cite{Hitchin_selfdual}, \cite{Hitchin_Teichmuller}) and have been studied since then by many authors (\cite{Wienhard_ICM}) because of their beautiful geometric and dynamical properties (\cite{Guichard_Hitchin}, \cite{GW_PSL4}).
In particular, Labourie (\cite{Labourie_cyclic}) showed that for every Hitchin representation $\rho: \pi_{1}(S) \rightarrow G$ of the fundamental group of a closed surface $S$ into a semi-simple Lie group $G$ of rank $2$, there is a unique $\rho$-equivariant minimal embedding of the universal cover of $S$ into the symmetric space $G/K$. In the case of $G=\Pp\Sp(4,\R)$, this implied a parametrization of the $\Pp\Sp(4,\R)$-Hitchin component by the bundle $\mathcal{Q}^{4}(S)$ of holomorphic quartic differentials over the Teichm\"uller space $\T(S)$ of $S$. Later, exploiting the exceptional isomorphism $\Pp\Sp(4,\R)\cong \SO_{0}(2,3)$, a new interpretation of Labourie's result was found in terms of pseudo-Riemannian geometry (\cite{BTT}): the unique $\rho$-equivariant minimal immersion into the symmetric space arises as the Gauss map of the unique $\rho$-equivariant maximal surface (i.e with vanishing mean curvature) in the pseudo-hyperbolic space $\h^{2,2}$. Moreover, the data in $\mathcal{Q}^{4}(S)$ parametrizing $\SO_{0}(2,3)$-Hitchin representations correspond to the embedding data of the maximal surfaces: the point in Teichm\"uller space is the conformal class of the induced metric and the holomorphic quartic differential determines the second fundamental form. \\

In this paper, we start from this pseudo-Riemannian interpretation in order to describe a compactification of the $\SO_{0}(2,3)$-Hitchin component, generalizing Bonahon's approach to Thurston's compactification of Teichm\"uller space by way of geodesic currents. We consider the space $\Ind(S)$ of induced metrics on maximal surfaces in $\h^{2,2}$ that are equivariant under an $\SO_{0}(2,3)$-Hitchin representation and we prove the following:

\vspace{0.2cm}
\begin{bigthm}\label{thmA} The space of induced metrics $\Ind(S)$ is parametrized by $\mathcal{Q}^{4}(S)/S^{1}$ where $S^{1}$ acts by multiplication on the quartic differentials. Moreover, all metrics in $\Ind(S)$ are negatively curved.
\end{bigthm}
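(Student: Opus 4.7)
The plan is to combine the parametrization of the $\SO_0(2,3)$-Hitchin component by the embedding data of maximal surfaces from \cite{BTT} with the Gauss equation for spacelike surfaces in $\h^{2,2}$.

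I would start by recalling that each Hitchin representation $\rho$ gives rise to a unique $\rho$-equivariant maximal surface $\sigma\colon \tilde S \to \h^{2,2}$; in a conformal chart I would write $g_\rho = 2e^{2u}|dz|^2$ and $q = f(z)\, dz^4$, where $q$ is the holomorphic $(4,0)$-part of the trace-free second fundamental form $I\!I$. The first technical step is to derive the Gauss equation in this setting. Since $\h^{2,2}$ has constant sectional curvature $-1$ and the normal bundle of a spacelike surface is negative definite of rank two, I would expect a direct calculation using $\tr_{g_\rho} I\!I = 0$ to yield an identity of the form
\begin{equation*}
K_{g_\rho} = -1 - \|q\|^2_{g_\rho},
\end{equation*}
where $\|q\|^2_{g_\rho} \geq 0$ denotes the pointwise squared norm of $q$ with respect to $g_\rho$. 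This formula already settles the curvature claim: $K_{g_\rho} \leq -1 < 0$ pointwise.

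The parametrization would then follow from three observations. First, the Gauss-Codazzi system for the maximal immersion reduces to a single elliptic PDE in $u$ that depends on $q$ only through $|f|^2$: the Codazzi equation gives $\bar\partial q = 0$ while the Gauss equation above involves only the norm of $q$. Hence replacing $q$ by $e^{i\theta}q$ leaves the conformal factor, and therefore $g_\rho$, unchanged, so the natural map $\cQ^4(S)/S^1 \to \Ind(S)$ is well defined and surjective. For injectivity, I would note that $g_\rho$ determines its conformal class, and hence the Riemann surface $(S, J)$, and that the Gauss equation recovers $\|q\|^2_{g_\rho}$ pointwise from $K_{g_\rho}$. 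Given two holomorphic quartic differentials $q, q'$ on $(S, J)$ of equal pointwise norm, the ratio $q'/q$ extends to a holomorphic function on the compact surface $S$ of constant modulus one (the orders of vanishing at common zeros agree because they are detected by $\|\cdot\|^2$), which by the maximum principle must be a constant in $S^1$.

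The hard part will be the sign bookkeeping in the Gauss equation step: I must check that, once $I\!I$ is expressed in terms of $q$ and $g_\rho$ via the maximal condition, the pseudo-Riemannian contribution of the negative definite normal bundle produces a curvature correction of the right sign $-\|q\|^2_{g_\rho}$, as opposed to $+\|q\|^2_{g_\rho}$ which would allow positive curvature. Beyond this, the rest of the argument follows essentially formally from the BTT parametrization and the maximum principle on the closed Riemann surface $S$.
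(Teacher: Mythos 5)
There is a genuine gap, and it sits exactly where you flagged the risk: the Gauss equation for a maximal surface in $\h^{2,2}$ does \emph{not} take the form $K_{g}=-1-\|q\|^{2}_{g}$. Because the normal bundle of a spacelike surface in $\h^{2,2}$ is negative definite of rank two, the second-fundamental-form term in the Gauss equation enters with the \emph{opposite} sign from the anti-de Sitter/$\h^{2,1}$ case you are implicitly modelling this on: writing things out, one gets $K=-1+(\text{nonnegative terms})$, so negative curvature is not automatic and is in fact the delicate part of the theorem. Concretely, the paper's Proposition \ref{prop:neg_curv} computes
\[
\kappa(h)=2\Bigl(\tfrac{e^{-2\psi_{1}}\|q\|_{\sigma}^{2}}{e^{\psi_{1}-\psi_{2}}}+\tfrac{e^{2\psi_{2}}}{e^{\psi_{1}-\psi_{2}}}-2\Bigr)=2(f_{1}+f_{2}-2)
\]
with $f_{1},f_{2}\geq 0$, and negativity requires proving $f_{1}+f_{2}<2$ via a nontrivial maximum-principle argument applied to $\Delta_{h}\log(f_{1}+f_{2})$. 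Your claim that the formula ``already settles the curvature claim'' therefore does not go through; the sign bookkeeping you deferred is precisely where the proposed route fails.

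A related structural problem affects your parametrization argument. The embedding data of the maximal surface is not governed by a single Liouville-type PDE in the conformal factor $u$: the Hitchin/Gauss--Codazzi system here is a genuinely coupled system in two unknowns $(\psi_{1},\psi_{2})$ (Equation \ref{eq:PDE}), with the induced metric only seeing the combination $\psi_{1}-\psi_{2}$, while the curvature and the quartic differential involve $\psi_{1}$ and $\psi_{2}$ separately. Consequently you cannot ``recover $\|q\|^{2}_{g}$ pointwise from $K_{g}$'' by inverting one scalar equation; the paper's injectivity proof instead runs a maximum-principle argument on the coupled system (first forcing $\psi_{2}=\widetilde{\psi}_{2}$, then $\psi_{1}=\widetilde{\psi}_{1}$, then $\|q\|_{\sigma}=\|\widetilde{q}\|_{\sigma}$). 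Two pieces of your plan do survive intact: the system depends on $q$ only through $|q|^{2}$, so the map is well defined on $\mathcal{Q}^{4}(S)/S^{1}$; and your final step --- two holomorphic quartic differentials on a closed Riemann surface with equal pointwise norm differ by a unimodular constant, by the maximum modulus principle --- is exactly how the paper concludes once $|q|=|\widetilde{q}|$ has been established.
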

\vspace{0.2cm}

A similar phenomenon has been described for the induced metrics on minimal surfaces in $\h^{2}\times \h^{2}$ which are equivariant under pairs of discrete and faithful representations into $\PSL(2,\R)$ (\cite{Charles_dPSL}) and for the induced metrics on affine spheres in $\R^{3}$ in the context of $\SL(3,\R)$-Hitchin representations (\cite{OT}). However, this is the first case in which the differential equation determining the induced metric starting from the data $(\sigma, q) \in \mathcal{Q}^{4}(S)$ is a fully coupled system of nonlinear PDEs, which makes the analysis more challenging. \\

By a result of Otal (\cite{Otal}), we can then uniquely associate to any $g\in \Ind(S)$ a geodesic current $\mu_{g}$, which is a $\pi_{1}(S)$-invariant Radon measure on the space of unoriented biinfinite geodesics of $S$ encoding the length spectrum of $g$, that is, the collection of lengths of closed geodesics for $g$. Because the space of projectivized geodesic currents $\Pp\mathcal{C}(S)$ is compact, in order to find a compactification of $\Ind(S)$, and thus of the $\SO_{0}(2,3)$-Hitchin component, it is sufficient to describe the behavior of the length spectrum of induced metrics corresponding to diverging sequences of Hitchin representations up to scalar multiplication. We can show the following:

\vspace{0.1cm}
\begin{bigthm}\label{thmB} Let $(\sigma_{n}, q_{n})\in \mathcal{Q}^{4}(S)$ be a diverging sequence and let $g_{n}$ denote the corresponding sequence of induced metrics with associated geodesic currents $\mu_{n}$. Then there is a sequence of constants $c_{n}>0$ such that $c_{n}\mu_{n}$ converges, up to subsequences, to a mixed structure $\mu_{\infty} \in \Pp\Mix(S) \subset \Pp\mathcal{C}(S)$. Moreover, any mixed structure can be attained as the limit of a suitable diverging sequence.
\end{bigthm}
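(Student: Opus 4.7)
The plan is to reduce the analysis of the asymptotic length spectrum of $g_n$ to that of the flat metric $|q_n|^{1/2}$ associated to the quartic differential, and then to invoke the closure result for such flat metrics (the "application" of Theorem~A already advertised in the abstract). First I would pass to a subsequence and fix a normalization: choose $c_n>0$ so that $c_n\mu_n$ has bounded, non-trivial total mass on a fixed compact subset of the space of geodesic currents (for example by normalizing the $L^{2}$-norm of $q_n$, or by normalizing the systole of $g_n$). Because $\mathbb{P}\mathcal{C}(S)$ is compact, some projective limit $\mu_\infty$ exists; what remains is to identify it.

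The next step is a comparison estimate between $g_n$ and the flat metric $g^{\flat}_n := |q_n|^{1/2}$. From Theorem~A one has a PDE system coupling the conformal factor $e^{2u_n}$ of $g_n$ (with respect to a background metric on $\sigma_n$) to $q_n$; this system, together with the negative curvature statement of Theorem~A, should be exploited via a maximum principle to derive two-sided bounds of the form $C^{-1}|q_n|^{1/2}\le g_n \le C\,(|q_n|^{1/2} + \kappa_n)$, with $\kappa_n$ a quantity controlled by the hyperbolic metric in $\sigma_n$ and becoming negligible (after rescaling by $c_n$) away from a thin part. Such estimates are standard in the affine-sphere / minimal-surface literature (\cite{OT}, \cite{Charles_dPSL}) but the coupling here makes them the main technical obstacle: one must bootstrap between the Gauss equation and the Codazzi equation to prevent the conformal factor from concentrating faster than $|q_n|^{1/2}$ allows. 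Once such comparison is in hand, the closed-geodesic length of $c_n\mu_n$ converges, on compact subsets disjoint from the zero set of the limiting quartic differential, to the corresponding length in the limit flat metric.

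On the regions where $g^{\flat}_n$ collapses—typically carried by a subsurface along whose boundary $\sigma_n$ pinches—one passes to a rescaling centered at the collapsing region. There the induced metric rescales to a hyperbolic-like limit whose associated current is a measured lamination, by the classical Thurston--Bonahon argument. Assembling the two regimes, I expect $\mu_\infty$ to decompose as a sum of a flat-quartic piece on one subsurface and a measured lamination piece on the complement, that is, precisely a point of $\mathbb{P}\Mix(S)$. Crucially this uses the closure of the space of flat quartic metrics in $\mathbb{P}\mathcal{C}(S)$, which is the content of the earlier structural result in this paper.

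For the realization part, one constructs explicit families $(\sigma_n,q_n)$ tailored to a prescribed mixed structure $\nu=\nu^{\mathrm{flat}}+\nu^{\mathrm{lam}}$. On the subsurface supporting $\nu^{\mathrm{flat}}$, choose Riemann surface structures converging to one whose half-translation structure (associated to a fixed quartic differential) has current $\nu^{\mathrm{flat}}$; on the complementary subsurface, pinch along the support of $\nu^{\mathrm{lam}}$ at rates proportional to the measure of its components. A plumbing/grafting construction, analogous to those used in \cite{Charles_dPSL} and \cite{OT}, glues these pieces into a diverging sequence in $\mathcal{Q}^4(S)/S^1$. Applying the forward part of the theorem to this sequence and using the continuity of the current-assignment on compact sets, one checks that the limit current is the prescribed $\nu$. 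The hardest technical point throughout remains the a priori control of the conformal factor $u_n$ from the coupled PDE system; once that is secured the rest of the argument follows the geodesic-currents template established in the earlier papers of the series.
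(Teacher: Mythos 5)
Your overall strategy --- compare $g_n$ with the flat metric $|q_n|^{1/2}$ and feed the result into the closure theorem for $\Flat_{4}(S)$ --- is the same as the paper's, but the comparison you propose is too weak to carry the argument. A two-sided bound $C^{-1}|q_n|^{1/2}\le g_n\le C\,(|q_n|^{1/2}+\kappa_n)$ with a uniform constant $C$ only shows that the limiting current is bi-Lipschitz comparable to the limiting flat current; it does not give convergence of the length spectrum of the rescaled metrics to that of the limit flat metric, which is what you need to identify the non-laminar part of $\mu_\infty$ with a quartic differential current. The paper proves the much sharper statement that the conformal factor $g_n/(4|q_n|^{1/2})=e^{u_{1,n}-u_{2,n}}$ tends to $1$ uniformly on compact sets away from the singularities of the limiting differential (Lemma \ref{lm:decay} and Corollary \ref{cor:conformal_factor}), via a maximum-principle comparison with a radial Bessel-function barrier that yields exponential decay of the error terms $u_{1,n},u_{2,n}$ at rate $\|q_n\|^{1/2}$ times the distance to the zeros. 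Without this decay, your step ``the closed-geodesic length of $c_n\mu_n$ converges to the corresponding length in the limit flat metric'' does not follow from the stated estimates.

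The second gap is the laminar part. You assert that on the collapsing regions a rescaling produces a ``hyperbolic-like limit whose associated current is a measured lamination by the classical Thurston--Bonahon argument,'' but nothing forces the rescaled induced metrics to converge to anything hyperbolic, and no such local analysis is carried out (or seems available). The paper identifies the residual current $\lambda$ globally: using the area asymptotics $\Area(S,g_n)/\Area(S,4|q_n|^{1/2})\to 1$ (Lemma \ref{lm:area_asymptotics}) together with the formula $i(L_{g},L_{g})=\frac{\pi}{2}\Area(S,g)$, one finds that the flat pieces already exhaust the total self-intersection of the limit, whence $i(\lambda,\lambda)=0$ and $\lambda$ is a measured lamination. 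You need either this bookkeeping or a substitute for it; note also that the case where $\|q_n\|$ stays bounded while $\sigma_n$ degenerates (purely laminar limit, handled in the paper via Lemma \ref{lm:domination_constant}) must be treated separately. Finally, your realization step is more complicated than necessary: since the boundary of $\overline{\Ind(S)}$ is closed and the rays $(\sigma,tq)$ already converge to $L_q$, it suffices to observe that the boundary contains $\Flat_{4}(S)$ and hence its closure $\Pp\Mix(S)$; no plumbing construction is needed.
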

\vspace{0.1cm}

Here a mixed structure is a geodesic current that comes from a flat metric with cone angles $2\pi+\frac{k\pi}{2}$ on a subsurface and forms a measured lamination on the complement.\\

The main idea behind the proof of Theorem \ref{thmB} lies in a comparison between the induced metric on the maximal surface associated to the data $(\sigma, q)\in \mathcal{Q}^{4}(S)$ and the flat metric  $|q|^{\frac{1}{2}}$ with cone singularities. We show that for any diverging sequence in $\mathcal{Q}^{4}(S)$, their limiting geodesic currents are mixed structures that enjoy the same decomposition into subsurfaces and coincide in their non-laminar part. Thus Theorem \ref{thmA} follows once we establish that the closure in $\Pp\mathcal{C}(S)$ of the space of flat metrics with conic singularities $\Flat_{4}(S)$ coming from holomorphic quartic differentials is exactly $\Pp\Mix(S)$. This extends previous results for singular flat metrics coming from quadratic (\cite{DLR_flat}) and cubic differentials (\cite{OT}). \\

As an application of our estimates, we also describe the behavior of the volume entropy of the induced metric on the equivariant maximal surfaces along rays of quartic differentials:

\begin{bigthm}\label{thmC} Along a ray of quartic differentials on a fixed Riemann surface, the volume entropy of the induced metrics decreases monotonically to $0$.
\end{bigthm}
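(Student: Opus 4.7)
Fix a Riemann surface $(S,\sigma)$ and a nonzero holomorphic quartic differential $q$ on $S$. For $t>0$, set $g_t := g_{\sigma,tq}$ and $h_t := h(g_t)$; we must show that $t \mapsto h_t$ is monotone decreasing with $h_t \to 0$ as $t \to \infty$. The plan is to deduce both assertions from sharper versions of the comparison between $g_t$ and the flat metric $|tq|^{1/2}$ already central to the proof of Theorem~\ref{thmB}.

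For the decay to zero, the key input is a pointwise lower bound of the form $g_t \ge c\,t^{1/2}\,|q|^{1/2}$ as bilinear forms on $TS$, with $c>0$ independent of $t$. This is the natural $\SO_0(2,3)$-analogue of the well-known comparison estimates between the induced metric on an equivariant affine sphere and the flat metric of the associated cubic differential, and should follow from the maximum principle applied to the conformal factor in the PDE system of Theorem~\ref{thmA}. Granted such an estimate, one gets $d_{g_t} \ge c^{1/2}\,t^{1/4}\,d_{|q|^{1/2}}$ on the universal cover; comparing orbit counts in metric balls and using that the flat cone metric $|q|^{1/2}$ has finite volume entropy $h_{\mathrm{flat}}:=h(|q|^{1/2})$ (its universal cover being a $\mathrm{CAT}(0)$ surface of controlled ball growth), one concludes
\begin{equation*}
h_t \;\le\; \frac{h_{\mathrm{flat}}}{c^{1/2}\,t^{1/4}} \;\longrightarrow\; 0 \qquad \text{as } t\to\infty.
\end{equation*}

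For the monotonicity, write $g_t = e^{2u_t}|dz|^2$ in a local conformal coordinate. Together with the auxiliary unknown encoding the second fundamental form, $u_t$ solves the coupled nonlinear elliptic system characterizing $\Ind(S)$ in Theorem~\ref{thmA}. Differentiating the system in $t$ produces a linear elliptic system for $\partial_t u_t$ and its auxiliary partner, and a cooperative-type maximum principle should then force $\partial_t u_t > 0$ pointwise on $S$. This gives the strict pointwise increase of $g_t$ in $t$, hence a strict increase of $d_{g_t}$, and by the same orbit comparison used in the decay step, a strict decrease of $h_t$.

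The main obstacle is the monotonicity step. As emphasized in the introduction, the PDE system governing $g_t$ is genuinely coupled --- in contrast with the scalar Wang equation of the $\SL(3,\bbR)$-Hitchin setting of \cite{OT} --- so the scalar maximum principle does not apply directly. One must either establish a vector-valued cooperative maximum principle for the linearization (whose validity should reflect the convex variational nature of the equivariant maximal-surface problem), or argue more globally by realizing $g_t$ as the minimizer of a convex energy functional depending monotonically on $tq$ and invoke monotone-operator techniques to conclude monotonicity of the minimizer in $t$.
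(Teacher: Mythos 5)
Your decay-to-zero step is essentially the paper's argument: the pointwise bound $g_t\ge 4\,|tq|^{1/2}=4t^{1/2}|q|^{1/2}$ is exactly Proposition \ref{prop:sub_ineq}, and combining it with the scaling and monotonicity of volume entropy gives $\mathrm{Ent}(g_t)\to 0$. That half is fine.

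The monotonicity half has a genuine gap, and it is precisely the step you flag as ``the main obstacle'': the strict pointwise monotonicity of $t\mapsto g_t=4e^{\psi_1^t-\psi_2^t}\sigma$. You do not prove it; you only propose two strategies (a cooperative maximum principle for the linearized system, or a convex variational characterization of $g_t$), neither of which is carried out, and neither is routine. Differentiating the system in $t$ already presupposes smooth dependence of $(\psi_1^t,\psi_2^t)$ on $t$, which itself requires showing the linearized operator is invertible (an implicit function theorem argument, Lemma \ref{lm:smooth_dependence} in the paper). More seriously, even granting that, the linearized system is cooperative (the off-diagonal entries $e^{\psi_1-\psi_2}$ are positive), but a cooperative maximum principle naturally controls the signs of $\dot\psi_1$ and $\dot\psi_2$ separately, whereas what you need is positivity of the \emph{difference} $\dot\psi_1-\dot\psi_2$; extracting that from the coupled system is exactly the delicate point. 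The paper does not reprove this: it imports the pointwise comparison $g_{t_1}<g_{t_2}$ for $0<t_1<t_2$ from Li's work on cyclic Higgs bundles (\cite[Proposition 4.1]{QL_cyclic}, quoted in Lemma \ref{lm:monotonicity}). The convex-energy alternative you sketch is not substantiated and should not be treated as a fallback.

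Two smaller remarks. First, your route from pointwise metric monotonicity to strict entropy decrease differs from the paper's, which invokes the first-variation formula for entropy with respect to the Bowen--Margulis measure (Theorem \ref{thm:variation_entropy}); your elementary ball-growth comparison does work, but as written it only yields $\mathrm{Ent}(g_{t_2})\le \mathrm{Ent}(g_{t_1})$ --- to get strict decrease you must upgrade the pointwise strict inequality to a uniform gap $g_{t_2}\ge(1+\epsilon)g_{t_1}$ (available by compactness of $T^1S$) and use $\mathrm{Ent}(g_{t_1})>0$. With those two lines added, this is a legitimately more elementary conclusion than the paper's. Second, the exponent in your decay bound ($t^{-1/4}$ versus the paper's $t^{-1/2}$) is a harmless discrepancy of normalization; both tend to zero.
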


Again, similar results have been proved for the induced metrics on affine spheres (\cite{Nie_rays}) and equivariant maximal surfaces in anti-de Sitter space (\cite{Tambu_rays}), but this is the first time this analysis involves a system of PDEs. Moreover, it follows from Theorem \ref{thmC} that the volume entropy is maximal at the Fuchsian locus, thus recovering one of the results in \cite{BTT} with completely different techniques.

\subsection*{Acknowledgement} We would like to kindly thank Qiongling Li for communicating to us the proof of Proposition \ref{prop:neg_curv} and for her invitation to Nankai where some of this work was done. The initial stages of the project were carried out when both authors were visiting MSRI during the Fall 2019 program "Holomorphic Differentials in Mathematics and Physics": we thank the organizers and the institute for their hospitality. The first author acknowledges support from the National Science Foundation through grants DMS-1564374 and DMS-1745670. The second author acknowledges support from the National Science Foundation through grant DMS-2005501.

\section{Background}\label{sec:background}

\subsection{Higgs bundles}
In the study of Hitchin representations into rank 2 semi-simple Lie groups, $\Pp\Sp(4,\R)$ is the first instance where the study of a particular class of geometric objects follows chronologically from the analysis on the equations governing the Higgs bundle, namely the Hitchin equations. In the previous cases of $\PSL(2,\R)\times \PSL(2,\R)$ and $\SL(3, \R)$, the study of the associated geometric objects precedes  the study of their corresponding Higgs bundles. In the case of $\PSL(2,\R)\times \PSL(2,\R)$, the associated geometric objects are minimal Lagrangians in $\h^{2}\times \h^{2}$, which was first studied by Schoen (\cite{Schoenharmonic}) as part of a way to define a midpoint map on Teichm\"uller space using harmonic maps. The Hitchin equations in this setting are precisely the Bochner formulas for harmonic maps between hyperbolic surfaces (see \cite{Hitchin_selfdual}, \cite{Wolf_harmonic}). For $\SL(3, \R)$, the geometric objects are affine spheres in $\R^{3}$, the study of which dates back to \textcommabelow{T}i\textcommabelow{t}eica, followed by Blaschke, Calabi, Cheng-Yau, and others (see \cite{loftin2010survey}). In the Higgs bundle interpretation, the Hitchin equations in this setting are the structural equations for hyperbolic affine spheres in $\R^{3}$. As Higgs bundles play the motivating role of our study of equivariant maximal surfaces in $\h^{2,2}$, we provide a cursory introduction to the theory of Higgs bundles. Many excellent references in the literature exist (\cite{guichard2018introduction},\cite{QL_survey}, \cite{wentworth2016higgs}, \cite{Wienhard_ICM}), and we make no attempt to count this section among them.\\

Let $X$ be a Riemann surface structure on $S$. If $K_{X}$ denotes its canonical bundle, then a \textit{Higgs bundle} is a pair $(\mathcal{E}, \Phi)$, where $\mathcal{E}$ is a holomorphic vector bundle on $X$ and $\Phi$, called the \textit{Higgs field}, is a holomorphic section of the bundle $\mathrm{End}(\mathcal{E}) \otimes K_{X}$.
We will primarily be concerned with Higgs bundles which are both (poly)-stable and cyclic. We say a Higgs bundle $(\mathcal{E}, \Phi)$ is \textit{stable} if for every non-zero $\Phi$-invariant subbundle $\mathcal{F}$, the slope condition
$$\frac{\mathrm{deg}(\mathcal{F})}{\mathrm{rank}(\mathcal{F})} < \frac{\mathrm{deg}(\mathcal{E})}{\mathrm{rank}(\mathcal{E})} $$
is satisfied. A Higgs bundle is said to be \textit{polystable} if it is a direct sum of stable Higgs bundles.\\

To the pair $(\mathcal{E}, \Phi)$, the Hitchin equations are the coupled pair of equations
\begin{align}
\begin{split}
&\overline{\partial}_{\mathcal{E}} \Phi = 0 \\
&F_{\nabla_{\overline{\partial}_{\mathcal{E}}, H}} + [\Phi, \Phi^{*_{H}}]= 0,
\end{split}
\end{align}
where $\overline{\partial}_{\mathcal{E}}$ is the complex structure on the bundle $\mathcal{E}$, and $H$ is a Hermitian metric on $\mathcal{E}$. Here $\nabla_{\overline{\partial}_{\mathcal{E}}, H}$ is the Chern connection, which is the unique connection compatible with $H$ 
and whose (0,1) part is the complex structure on $\mathcal{E}$. 
The (poly)stability condition guarantees that there is a solution to the Hitchin equations (\cite{Simpson_Higgs}, \cite{Hitchin_selfdual}). \\

A Higgs bundle $(\mathcal{E}, \Phi)$ is said to be $\textit{cyclic}$ if the bundle $\mathcal{E}$ splits holomorphically into a sum of line bundles, that is,
$$ \mathcal{E} = \mathcal{L}_{1} \oplus \mathcal{L}_{2} \oplus ... \oplus \mathcal{L}_{n} $$
and the Higgs field is of the form
$$ \Phi = \begin{pmatrix}
0 &  &  & \gamma_{n}\\
\gamma_{1} & 0 &  & \\
 & \ddots & \ddots & \\
 & & \gamma_{n-1} & 0
\end{pmatrix}.$$
In particular, a Higgs field may be regarded as a bundle map $\Phi: \mathcal{E} \to \mathcal{E} \otimes K$, so the cyclicity condition says the Higgs field is a collection of graded bundle maps $\gamma_{i}: \mathcal{L}_{i} \to \mathcal{L}_{i+1} \otimes K$, for $1\leq i \leq n-1$ and $\gamma_{n}: \mathcal{L}_{n} \to \mathcal{L}_{1} \otimes K$. For cyclic Higgs bundles, the Hitchin equations are more treatable: one can show (\cite{Baraglia_cyclic}, \cite{Collier_thesis}) that the Hermitian metric $H$ satisfying the Hitchin equations must be diagonal $H=\diag(h_{1},h_{2},\dots, h_{m}, h_{m}^{-1}, \cdots, h_{1}^{-1})$ (we assume here $n=2m$, which is the case of interest in this paper) in the holomorphic splitting described above and the Hitchin equations reduce to the following system of PDEs
\begin{equation}\label{eq:cyclic}
\begin{cases}
    \Delta \log(h_{1})+|\gamma_{1}|^{2}h_{1}^{-1}h_{2}-|\gamma_{n}|^{2}h_{1}^{2}=0 \\
    \Delta \log(h_{k})+|\gamma_{k}|^{2}h_{k}^{-1}h_{k+1}-|\gamma_{k-1}|h_{k-1}^{-1}h_{k}=0 \ \ \  \ \ \text{for $k=2,\dots, m-1$} \\
    \Delta \log(h_{m})+|\gamma_{m}|^{2}h_{m}^{-2}-|\gamma_{m-1}|^{2}h_{m-1}^{-1}h_{m}=0
\end{cases}
\end{equation}
where $\gamma_{i}$ is holomorphic. Here the equations are written in local coordinate charts $U$ of $S$ and we have denoted by $\Delta$ the operator $\bar{\partial}\partial$, which is one quarter of the usual Laplacian.

\subsection{The non-abelian Hodge correspondence}
The non-abelian Hodge correspondence furnishes a dictionary between representations and Higgs bundles by way of harmonic maps. In his seminal paper \cite{Hitchin_selfdual}, Hitchin introduced the notion of a Higgs bundle and the Hitchin equations in order to study surface group representations into $\mathbb{P} \SL (2, \mathbb{C})$, though the correspondence can be shown to hold in greater generality. Here, we describe the non-abelian Hodge correspondence for $G= \textnormal{GL}(n, \mathbb{C})$ (for particular subgroups of $\textnormal{GL}(n, \mathbb{C})$, additional properties must be imposed on the Higgs bundle: for instance if $G = \SL(n, \mathbb{C})$, the trace of the Higgs field $\phi$ must vanish; in the present setting where $G=\textnormal{Sp}(4, \R)$, the associated flat connection $\nabla$ will preserve a real subbundle $\mathcal{E}_{\R}$ and a symplectic form $\omega$, see Section 1.4). \\

The representation variety $\textnormal{Hom}(\pi_{1}(S),\textnormal{GL}(n, \mathbb{C}))/ \textnormal{GL}(n, \mathbb{C}) $ consists of representations of the fundamental group to the group $G =\textnormal{GL}(n, \mathbb{C})$ up to conjugation. From such a representation $\rho$, one may form an associated smooth rank $n$ complex vector bundle as follows: first form the trivial bundle $\widetilde{S} \times \mathbb{C}^{n}$, then take the quotient $E_{\rho} := \widetilde{S} \times_{\rho} \mathbb{C}^{n}$ by the identification $(\widetilde{x}, v) \sim (\gamma \cdot \widetilde{x}, \rho(\gamma) \cdot v)$, for $\gamma \in \pi_{1}(S)$. The flat structure on the trivial bundle descends to a flat structure on $E_{\rho}$, and hence equips $E_{\rho}$ with a flat connection $\nabla$. From a flat connection on a rank $n$ complex vector bundle $E$ over $S$, one may recover the (conjugacy class of the) representation by taking its holonomy representation. The flatness condition guarantees the holonomy depends solely on the homotopy class of the loop, and not just the loop itself.\\

Given a Hermitian metric on $(E, \nabla)$, one may uniquely write the connection $\nabla$ as a sum of a unitary connection $\nabla_{H}$ and a $H$-Hermitian 1-form $\psi_{H}$ with coefficients in End$(E)$. If a Riemann surface structure $X$ is chosen on $S$, then one may further decompose $\nabla_{H}$ and $\psi_{H}$ by type. In short, one has
\begin{align*}
    \nabla &= \nabla_{H} + \psi_{H}\\
    &= \nabla_{H}^{1,0} + \nabla_{H}^{0,1} + \psi_{H}^{1,0} + \psi_{H}^{0,1}.
\end{align*}
Work of Donaldson and Corlette shows how one can construct a stable Higgs bundle from this data.
\begin{thm}[\cite{Donaldson_selfdual}, \cite{Corlette}]
If $\nabla$ is a flat irreducible $\mathrm{GL}(n, \mathbb{C})$ connection on a vector bundle $E$ over $X$, then there exists a unique (up to a scalar multiple) Hermitian metric $H$
so that $(E, \nabla_{H}^{0,1}, \psi_{H}^{1,0})$ is a stable $\mathrm{GL}(n, \mathbb{C})$-Higgs bundle over $X$.
\end{thm}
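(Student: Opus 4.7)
The plan is to reformulate the statement in terms of harmonic metrics on the flat bundle $(E,\nabla)$ and invoke the Corlette--Donaldson existence theorem. The key observation is that a Hermitian metric $H$ on $E$ is equivalent to a $\rho$-equivariant map $f_{H}\colon \widetilde{X}\to \mathrm{GL}(n,\mathbb{C})/\mathrm{U}(n)$ from the universal cover of $X$ into the Riemannian symmetric space of positive-definite Hermitian forms on $\mathbb{C}^{n}$, where $\rho$ is the holonomy of $\nabla$. Under this dictionary the decomposition $\nabla=\nabla_{H}+\psi_{H}$ reflects the Cartan decomposition of the tangent space of the symmetric space, and the metric $H$ is said to be \emph{harmonic} precisely when $d_{\nabla_{H}}\!\star\!\psi_{H}=0$; this is exactly the harmonic map equation for $f_{H}$.

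The first main step is existence: I would apply the theorem of Corlette (and Donaldson in the rank-two case) asserting that for every reductive flat connection --- a class which includes all irreducible ones --- there is a $\rho$-equivariant harmonic map $f_{H}$. The proof minimizes the Dirichlet energy over a compact fundamental domain using a heat flow or variational argument; irreducibility is what prevents the minimizing sequence from escaping to infinity in the noncompact symmetric space, via a standard properness/center-of-mass argument. This analytic input is the principal obstacle, and also the source of the uniqueness in the theorem: by convexity of distance in the nonpositively curved symmetric space, any two harmonic metrics differ by a parallel self-adjoint endomorphism of $E$, and irreducibility forces this endomorphism to be a scalar.

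The next step is to extract the Higgs bundle. Decomposing the flatness equation $\nabla^{2}=0$ by Hodge bidegree yields four identities relating $\nabla_{H}^{1,0},\nabla_{H}^{0,1},\psi_{H}^{1,0},\psi_{H}^{0,1}$; combining these with the harmonicity equation $d_{\nabla_{H}}\!\star\!\psi_{H}=0$ and the fact that $\psi_{H}$ is $H$-Hermitian shows that $\bar{\partial}_{\mathcal{E}}\Phi=0$, where $\bar{\partial}_{\mathcal{E}}:=\nabla_{H}^{0,1}$ endows $\mathcal{E}:=(E,\bar{\partial}_{\mathcal{E}})$ with a holomorphic structure and $\Phi:=\psi_{H}^{1,0}$ defines a holomorphic section of $\mathrm{End}(\mathcal{E})\otimes K_{X}$. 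The Hitchin equation $F_{\nabla_{H}}+[\Phi,\Phi^{*_{H}}]=0$ comes out of the same decomposition, confirming $H$ is a solution to the Hitchin system.

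Finally, I would verify stability of $(\mathcal{E},\Phi)$. Any destabilizing $\Phi$-invariant holomorphic subbundle $\mathcal{F}\subset\mathcal{E}$ would, through the harmonic metric $H$ together with its $H$-orthogonal complement, produce a $\nabla$-parallel splitting of $E$: indeed, the second fundamental form of $\mathcal{F}$ in $(\mathcal{E},H)$ can be computed from the harmonicity equation to vanish, so $\mathcal{F}$ is $\nabla$-invariant, contradicting the irreducibility of $\rho$. Combined with the existence and uniqueness above, this yields the desired stable Higgs bundle, unique up to a positive scalar rescaling of $H$.
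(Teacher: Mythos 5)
The paper does not prove this statement: it is quoted as background and attributed to Donaldson and Corlette, so there is no in-paper argument to compare against. Your sketch is the standard proof of the Corlette--Donaldson theorem and is essentially correct: existence of the equivariant harmonic map for reductive (in particular irreducible) holonomy, uniqueness up to a parallel positive self-adjoint endomorphism which Schur's lemma reduces to a scalar, and the type decomposition of $\nabla^{2}=0$ combined with $d_{\nabla_{H}}\!\star\!\psi_{H}=0$ to get $\bar\partial_{\mathcal E}\Phi=0$ and the Hitchin equation. The one place where your wording is imprecise is the stability step: the second fundamental form of a $\Phi$-invariant subbundle $\mathcal F$ is not shown to vanish ``from the harmonicity equation''; rather, the Chern--Weil formula together with the Hitchin equation gives $\deg\mathcal F=-\frac{1}{2\pi}\bigl(\|\beta\|_{L^{2}}^{2}+\|\pi^{\perp}\Phi\,\pi\|_{L^{2}}^{2}\bigr)\leq 0=\deg E$, so a violation of the stability inequality forces both terms to vanish, and only then does one conclude that the $H$-orthogonal splitting is holomorphic, $\Phi$-invariant on both factors, and hence $\nabla$-parallel, contradicting irreducibility. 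With that correction the argument is the standard one and complete at the level of a sketch; the genuinely hard analytic content (the existence of the harmonic metric via heat flow or energy minimization, with properness coming from irreducibility) is correctly identified and appropriately deferred to the cited sources.
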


The theorem is proven using harmonic maps, and the role of the Riemann surface structure chosen along with the associated Hermitian metric and the harmonic map will play an important role in the sequel and so we make a brief digression to discuss the basics of harmonic maps. Recall a harmonic map $f: (M,g) \to (N,h)$ between two closed Riemannian manifolds is a Lipchitz map which is a critical point of the energy functional
$$ E(f) = \frac{1}{2} \int_{M} |df|^{2} dA_{g}. $$
When the domain is a surface, the energy of a map is invariant when replacing the domain metric with a different metric in the same conformal class. Hence, harmonic maps are well-defined when the domain is a Riemann surface.\\

A Hermitian metric on $E_{\rho}$ is equivalent to a $\rho$-equivariant map from $\widetilde{X}$ into the symmmetric space $\textnormal{GL}(n, \mathbb{C}) / \textnormal{U}(n)$. Donaldson and Corlette's theorem may thus be rephrased as follows.

\begin{thm}[\cite{Donaldson_selfdual}, \cite{Corlette}]
Let $\nabla$ be a flat irreducible $\mathrm{GL}(n, \mathbb{C})$-connection on a vector bundle $E$ over $X$ with holonomy representation $\rho: \pi_{1}(S) \to\textnormal{GL}(n, \mathbb{C})$, there there exists a unique $\rho$-equivariant harmonic map $f_{\rho}: \widetilde{X} \to \textnormal{GL}(n, \mathbb{C}) / \textnormal{U}(n)$.

\end{thm}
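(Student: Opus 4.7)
The plan is to establish existence and uniqueness of the $\rho$-equivariant harmonic map $f_{\rho}: \widetilde{X} \to N$, where $N = \mathrm{GL}(n,\mathbb{C})/\mathrm{U}(n)$, by exploiting the fact that $N$ is a nonpositively curved symmetric space on which $\mathrm{GL}(n,\mathbb{C})$ acts by isometries. Since $\rho$-equivariant maps have energy density invariant under $\pi_{1}(S)$, the total energy
\begin{equation*}
E(f) = \frac{1}{2}\int_{S} |df|^{2}\, dA
\end{equation*}
is well-defined by integrating over a fundamental domain, and a harmonic map is by definition a critical point of $E$.

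For existence, I would use either the Eells--Sampson parabolic heat flow $\partial_{t} f_{t} = \tau(f_{t})$, where $\tau$ is the tension field, or the direct method of minimizing $E$ over a suitable Sobolev space of equivariant maps starting from a smooth initial equivariant map. Nonpositive curvature of the target yields the Bochner-type inequality $\Delta |df|^{2} \geq -C |df|^{2}$ which supplies the a priori $L^{\infty}$ estimates and regularity needed to extract a smooth limit. The main obstacle, and the genuine content of the theorem, is preventing the minimizing sequence (or the heat flow trajectory) from escaping to infinity in the noncompact target $N$. Here the irreducibility of $\nabla$ is essential: following Corlette's argument, if escape occurs one extracts an asymptotic direction in the visual boundary $\partial_{\infty} N$ via a center-of-mass or ultralimit construction, and the stabilizer of this direction in $\mathrm{GL}(n,\mathbb{C})$ is a proper parabolic subgroup $P$ which would then have to contain the image of $\rho$, contradicting irreducibility. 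Once compactness is secured, elliptic regularity promotes the resulting weak harmonic map to a smooth one.

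For uniqueness, I would exploit convexity: given two $\rho$-equivariant harmonic maps $f_{0}, f_{1}$, the function $x \mapsto d_{N}(f_{0}(x), f_{1}(x))^{2}$ is subharmonic on $\widetilde{X}$ (a classical Hessian comparison consequence of nonpositive curvature), and it is $\pi_{1}(S)$-invariant, so it descends to the compact surface $S$ and must be constant. A second variational argument along the geodesic homotopy $f_{s}$ between $f_{0}$ and $f_{1}$ then forces $f_{0} \equiv f_{1}$; the only failure mode is that the images sweep out a totally geodesic flat of $N$, producing a nontrivial subspace preserved by the image of $\rho$ (modulo the centralizer), which is once again ruled out by irreducibility. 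The principal difficulty throughout is the first one, namely the non-escape-to-infinity argument, since it is here that one must transform an analytic compactness question into the algebraic condition on $\rho$.
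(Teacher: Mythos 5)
The paper does not prove this statement: it is quoted as a classical theorem of Donaldson and Corlette, obtained by translating the harmonic-metric existence theorem stated just above it into the language of equivariant maps, so there is no in-paper argument to compare yours against. Judged on its own, your outline is the standard and correct route (Eells--Sampson/Corlette heat flow or Donaldson's direct minimization; the Bochner inequality for targets of nonpositive curvature supplying a priori bounds; reductivity of $\rho$ --- which irreducibility implies --- converted, via stabilizers of points in the visual boundary being parabolic, into the non-escape-to-infinity statement; subharmonicity of $d_{N}(f_{0},f_{1})^{2}$ for uniqueness). Two caveats are worth recording. First, the symmetric space $\mathrm{GL}(n,\mathbb{C})/\mathrm{U}(n)$ splits as $\mathrm{SL}(n,\mathbb{C})/\mathrm{SU}(n)\times\mathbb{R}$, and the central $\mathbb{R}$-factor is a genuine flat on which the image of $\rho$ acts by translations; by Schur's lemma irreducibility leaves exactly this flat alive, so your uniqueness argument only gives uniqueness up to translation in that factor --- which is consistent with the ``unique up to a scalar multiple'' qualifier in the Hermitian-metric formulation of the theorem, but means the statement as literally phrased (and your claim that irreducibility rules out all flats) needs this one-dimensional exception. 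Likewise, the boundary point at infinity determined by escape in the central direction is stabilized by all of $\mathrm{GL}(n,\mathbb{C})$, not by a proper parabolic, so that direction must be handled separately (it is harmless: the $\mathbb{R}$-component of the problem is a twisted linear Laplace equation, solvable directly, and minimizing sequences can be recentred). Second, the correct hypothesis driving Corlette's compactness argument is that $\rho$ is reductive; irreducibility is a sufficient but stronger condition, and phrasing the escape argument in terms of parabolics containing $\rho(\pi_{1}(S))$, as you do, is exactly how one sees the implication.
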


In light of the preceding theorem, the Hermitian metric $H$ is sometimes referred to as the harmonic metric. Finally, to complete the correspondence, the following result of Hitchin and Simpson shows how to reconstruct a flat connection given a Higgs bundle.

\begin{thm}[\cite{Hitchin_selfdual}, \cite{Simpson_Higgs}]
Let $(\mathcal{E}, \varphi)$ be a polystable $\textnormal{GL}(n, \mathbb{C})$-Higgs bundle, then there exists a unique Hermitian metric $H$ on $\mathcal{E}$ such that
$$ \nabla = \nabla_{\overline{\partial}_{\mathcal{E}}, H} + \varphi + \varphi^{*_{H}}$$
is flat, where $\nabla_{\overline{\partial}_{\mathcal{E}}, H}$ is the Chern connection determined by $H$ and $\overline{\partial}_{\mathcal{E}}$, and $\varphi^{*_{H}}$ is the Hermitian adjoint of $\phi$.
\end{thm}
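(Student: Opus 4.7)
The plan is to reduce flatness of $\nabla = \nabla_{\overline{\partial}_{\mathcal{E}}, H} + \varphi + \varphi^{*_{H}}$ to the Hitchin equation, then solve that equation for $H$ using the polystability hypothesis. First I would compute the curvature $F_{\nabla}$ by expanding $\nabla \wedge \nabla$ in type components $(2,0),(1,1),(0,2)$. The $(2,0)$ and $(0,2)$ parts vanish because $\varphi$ is holomorphic (so $\overline{\partial}_{\mathcal{E}} \varphi = 0$, which gives $\partial_H \varphi^{*_H} = 0$ by adjoining), $\varphi \wedge \varphi = 0$ for type reasons, and similarly $\varphi^{*_{H}} \wedge \varphi^{*_{H}} = 0$. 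The $(1,1)$ part is $F_{\nabla_{\overline{\partial}_{\mathcal{E}}, H}} + [\varphi, \varphi^{*_{H}}]$. Thus finding a flat $\nabla$ is equivalent to finding a Hermitian metric $H$ on $\mathcal{E}$ satisfying
\begin{equation*}
F_{\nabla_{\overline{\partial}_{\mathcal{E}}, H}} + [\varphi, \varphi^{*_{H}}] = 0.
\end{equation*}

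For existence, the strategy I would adopt is the variational one. Fix a background Hermitian metric $H_0$ on $\mathcal{E}$; any other metric can be written as $H = H_0 e^{s}$ with $s$ an $H_0$-Hermitian endomorphism of $\mathcal{E}$. Define the Donaldson functional $\mathcal{M}(H_0, H)$, whose first variation is the left-hand side of the Hitchin equation (up to a trace term one removes by normalizing the determinant metric separately). The main steps would be: (i) show $\mathcal{M}$ is convex along geodesics in the space of metrics; (ii) prove a Donaldson-type lemma bounding $\sup |s|$ in terms of the $L^1$-norm; (iii) use these to deduce that on any minimizing sequence $H_i$, the endomorphism $s_i$ stays $C^0$-bounded unless a destabilizing $\varphi$-invariant subsheaf emerges in the weak limit, which is ruled out by polystability. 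Standard elliptic regularity then yields a smooth minimizer $H$ solving the Hitchin equation. An alternative that I would keep in mind is the Donaldson heat flow $H^{-1}\partial_t H = -i\Lambda(F_{\nabla_{\overline{\partial}_{\mathcal{E}}, H}} + [\varphi, \varphi^{*_H}])$, whose long-time existence is standard and whose convergence at $t \to \infty$ is exactly the content of stability.

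For uniqueness, suppose $H_1, H_2$ both solve the Hitchin equation and write $H_2 = H_1 e^{s}$ with $s$ self-adjoint with respect to $H_1$. A direct computation (subtracting the two Hitchin equations and pairing with $s$) gives a Bochner-type identity of the form
\begin{equation*}
\Delta \tr(e^{s}) \geq \bigl|\bigl[\varphi, e^{s/2}\bigr]\bigr|^{2}_{H_1} + \text{non-negative terms},
\end{equation*}
where the non-negative terms control the $\overline{\partial}_{\mathcal{E}}$-derivative of $e^{s/2}$. Applying the maximum principle on the closed Riemann surface $X$ forces $s$ to be $\nabla_{\overline{\partial}_{\mathcal{E}}, H_1}$-parallel and to commute with $\varphi$; polystability of $(\mathcal{E},\varphi)$ then forces $s$ to act by scalars on each stable summand, and the normalization fixes it to be zero on the determinant, giving $H_1 = H_2$ in the irreducible case and the expected moduli in the polystable case.

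The hard step is existence, specifically the proof that stability implies the $C^0$-bound on $s$ along the minimizing sequence (equivalently, the properness of the Donaldson functional). This is where Uhlenbeck--Yau-type weak compactness and the construction of a destabilizing subsheaf from a blow-up limit of $s_i/\|s_i\|_\infty$ enter, and where the presence of the Higgs field $\varphi$ forces the limiting subsheaf to be $\varphi$-invariant. Once this obstruction is navigated, the rest of the argument (curvature computation, heat flow regularity, maximum principle for uniqueness) is comparatively routine.
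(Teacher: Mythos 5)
This theorem is quoted in the paper as background and attributed to \cite{Hitchin_selfdual} and \cite{Simpson_Higgs}; the paper contains no proof of its own, so there is nothing internal to compare your argument against. Your outline is the standard Hitchin--Simpson argument from those references, and as a roadmap it is correct: the reduction of flatness to the equation $F_{\nabla_{\overline{\partial}_{\mathcal{E}},H}}+[\varphi,\varphi^{*_H}]=0$ is right (on a Riemann surface the $(2,0)$ and $(0,2)$ components vanish for degree reasons alone; holomorphicity of $\varphi$ is what kills the extra $(1,1)$ terms $\overline{\partial}\varphi$ and $\partial_H\varphi^{*_H}$, so your type bookkeeping is slightly misattributed but the conclusion stands), and existence via the Donaldson functional or heat flow, with polystability entering through the non-existence of a destabilizing $\varphi$-invariant subsheaf in the blow-up limit, is exactly how Simpson proceeds. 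Two caveats: first, the entire analytic core --- properness of the functional, the Uhlenbeck--Yau construction of the destabilizing subsheaf from $s_i/\|s_i\|_{\infty}$, and regularity of the weak limit --- is named rather than proved, so this is a correct skeleton rather than a proof; second, as you implicitly note in your uniqueness discussion, for $\mathrm{GL}(n,\mathbb{C})$ the metric is only unique up to a positive constant scale on each stable summand (since $\varphi^{*_{cH}}=\varphi^{*_H}$ and the Chern connection is scale-invariant), so the uniqueness claim in the statement should be read modulo that normalization, consistent with the ``unique up to a scalar multiple'' phrasing in the Donaldson--Corlette theorem quoted just above it.
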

The unique Hermitian metric, which solves the Hitchin equations (1.1) is the harmonic metric discussed previously, once again showing the importance of the equivariant harmonic maps.\\

We conclude our discussion here by highlighting an important class of representations. It is well known that there is a unique irreducible representation $\iota: \PSL(2, \R) \hookrightarrow \PSL(n, \R),$ where $n \geq 2$. Thus to any Fuchsian representation $\rho$, its image under postcomposition by $\iota$ is an element of the $\PSL(n, \R)$ representation variety. In fact, through postcomposition by $\iota$, Teichm{\"u}ller space injects into a distinguished component of the $\PSL(n, \R)$ character variety (and hence the moduli space of Higgs bundle via the non-abelian Hodge correspondence), called the \textit{Hitchin component}. Elements of this particular distinguished component are called \textit{Hitchin representations}, and so may be thought of as smooth deformations of Fuchsian representations. 

In the present setting of $n=4$, the image of $\PSL(2, \R)$ by $\iota$ lies entirely in $\Pp\Sp(4,\R)$. Under the non-abelian Hodge correspondence, the associated Higgs bundles will have a rather nice expression (see Section 1.4).

\subsection{Labourie's conjecture}

In defining a Higgs bundle, a Riemann surface structure must first be placed on the underlying smooth surface $S$. The choice of the Riemann surface $X$ may be arbitrary as the correspondence between the representation variety and the moduli space of Higgs bundles is constructed through analysis of the unique $\rho$-equivariant harmonic map $f$ from $\widetilde{X}$ to the symmetric space $G/K$. A choice of a different Riemann surface structure $X'$ yields a different unique $\rho$-equivariant harmonic map $f'$ from $\widetilde{X'}$ to $G/K$, from which the Higgs bundle is constructed. \\ 

Harmonicity of the map $f$ produces a holomorphic object, known as the \textit{Hopf differential}, which is the (2,0)-part of the pullback metric $f^{*}h$, and so is a holomorphic quadratic differential. When the map is both harmonic and conformal, the corresponding Hopf differential vanishes.\\

To this end, Labourie has conjectured there is a natural choice of a Riemann surface structure for each representation. To each Hitchin representation $\rho$ into a split real Lie group $G$, there is a unique Riemann surface structure $X$, so that the unique $\rho$-equivariant harmonic map $f_{\rho} : \widetilde{X} \to G/K$ is conformal. The conjecture has been completely resolved in rank 2. For example, for the group $G=\PSL(2,\R)\times \PSL(2,\R)$, the conjecture is equivalent to the uniqueness of minimal lagrangians in the bidisc $\mathbb{H}^{2} \times \mathbb{H}^{2}$, first studied by Schoen \cite{Schoenharmonic}.\\

Conformality of the unique $\rho$-equivariant harmonic map ensures the image is a $\rho$-equivariant minimal surface inside the symmetric space $G/K$. Hence Labourie's conjecture produces minimal surfaces inside the associated symmetric space from the data of Hitchin representations. These geometric objects bridge the gap between surface group representations, which are topological and algebraic objects, and Higgs bundles, which are holomorphic objects.\\

The mapping class group acts on the space of conjugacy classes of Hitchin representations by remarking. In the absence of a preferred complex structure to each such Hitchin representation, the parameterization of Hitchin components using an arbitrary Riemann surface surface structure does not admit a natural mapping class group action. Labourie's conjecture, when true, restores this symmetry, which is broken in the non-abelian Hodge correspondence. \\

Finally, to each Hitchin representation $\rho$ in a rank-two Lie group $G$, if the corresponding stable Higgs bundle is constructed using the unique conformal structure $X_{\rho}$, then the Higgs bundle is cyclic. Notice that \textit{a priori}, a Higgs bundle constructed from a Hitchin representation may also be cyclic for several different Riemann surfaces, but Labourie \cite{Labourie_cyclic} shows that in rank two, this cannot happen; the Higgs bundle is cyclic only for a unique Riemann surface.

\subsection{Maximal surfaces in pseudo-hyperbolic space} We now specialize to Higgs bundles in the $\Sp(4,\R)$-Hitchin component and explain their relation with maximal surfaces in the pseudo-hyperbolic space $\h^{2,2}$. \\

Let $X$ be a Riemann surface structure on $S$ and let $K=K_{X}$ be its canonical bundle. An $\Sp(4,\R)$-Higgs bundle in the Hitchin component is a pair $(\mathcal{E}, \varphi)$ where 
\[
\mathcal{E}=K^{\frac{3}{2}}\oplus K^{-\frac{1}{2}}\oplus K^{-\frac{3}{2}}\oplus K^{\frac{1}{2}} \ \ \ \text{and} \ \ \ 
    \varphi=\begin{pmatrix}
            0 & 0 & q_{4} & q_{2} \\
            0 & 0 & q_{2} & 1 \\
            0 & 1 & 0 & 0 \\
            1 & q_{2} & 0 & 0 
            \end{pmatrix}
\]
with $q_{2}$ and $q_{4}$ being holomorphic quadratic and quartic differentials respectively. By Labourie's work (\cite{Labourie_cyclic}), we can assume that the complex structure $X$ has been chosen so that the associated equivariant harmonic map is conformal and that $q_{2}$ vanishes identically. In this way, we obtain a cyclic $\Sp(4,\R)$-Higgs bundle which only depends on the Riemann surface $X$ and the holomorphic quartic differential $q=q_{4}$. Therefore, the $\Sp(4,\R)$-Hitchin component is parametrized, via the non-abelian Hodge correspondence, by the bundle $\mathcal{Q}^{4}(S)$ of holomorphic quartic differentials over the Teichm\"uller space of $S$. We already mentioned that, in the cyclic setting, the Hermitian metric $H$ that solves the Hitchin equations is diagonal. If we write $H=\diag(h_{1}, h_{2}^{-1}, h_{1}^{-1}, h_{2})$, then $h_{1}$ and $h_{2}$ are local solutions of the system of PDEs
\begin{equation}\label{eq:cyclic_sp4}
\begin{cases}
    \Delta \log(h_{1})+h_{1}^{-1}h_{2}-|q|^{2}h_{1}^{2}=0 \\
    \Delta \log(h_{2})+h_{2}^{-2}-h_{1}^{-1}h_{2}=0 \ .
\end{cases}
\end{equation}
It is convenient to rewrite these equations with respect to a background hyperbolic metric $2\sigma$ in the conformal class. We regard $\sigma$ as a section of $K\otimes \overline{K}$ so that in local coordinates $\sigma=\sigma(z)dz\otimes d\bar{z}$ and the corresponding Riemannian metric on $S$ is locally $2\sigma=2\sigma(z)(dx^{2}+dy^{2})$. The functions $\psi_{1}$ and $\psi_{2}$ such that
\[
    h_{1}^{-1}=e^{\psi_{1}}\sigma^{\frac{3}{2}} \ \ \ \text{and} \ \ \ 
    h_{2}^{-1}=e^{\psi_{2}}\sigma^{\frac{1}{2}}
\]
are globally defined on $S$ and solve the equations
\begin{equation}\label{eq:cyclic_sp4_global}
    \begin{cases}
    \Delta_{\sigma} \psi_{1} =e^{\psi_{1} - \psi_{2}}-e^{-2\psi_{1}} \|q\|^{2}_{\sigma} +\frac{3}{4}\kappa(\sigma) \\ 
    \Delta_{\sigma} \psi_{2} =e^{2\psi_{2}}- e^{\psi_{1} - \psi_{2}} + \frac{1}{4}\kappa(\sigma)
    \end{cases} \ . 
\end{equation}

\noindent A solution $H$ to Hitchin equations gives a flat connection $\nabla=\nabla_{\bar{\partial}_{\mathcal{E}},H}+\varphi+\varphi^{*H}$ with holonomy in $\Sp(4,\R)$. This means, in particular, that $\nabla$ preserves a real subbundle $\mathcal{E}_{\R}$ of $\mathcal{E}$ and a symplectic form on $\mathcal{E}_{\R}$, that we are now going to describe briefly. The bundle $\mathcal{E}$ is equipped with the orthogonal structure 
\[
    Q=\begin{pmatrix}
        0 & \mathrm{Id} \\
        \mathrm{Id} & 0 
        \end{pmatrix} \ ,
\]
which induces a real involution $\tau:\mathcal{E}\rightarrow \overline{\mathcal{E}}$ given by $\tau(v)=H^{-1}Q\bar{v}$. The subbundle $\mathcal{E}_{\R}=\mathrm{Fix}(\tau)$ is parallel for the flat connection $\nabla$ and is generated by the local sections
\[
    u_{1}(z)=\frac{1}{\sqrt{2}}\begin{pmatrix}
            0 \\ h_{2}^{\frac{1}{2}} \\ 0 \\ h_{2}^{-\frac{1}{2}} 
            \end{pmatrix}
    \ \ \ 
    u_{2}(z)=\frac{1}{\sqrt{2}}\begin{pmatrix}
            h_{1}^{-\frac{1}{2}} \\ 0 \\ h_{1}^{\frac{1}{2}} \\ 0 
            \end{pmatrix}
\]
\[
    u_{3}(z)=\frac{1}{\sqrt{2}}\begin{pmatrix}
            0 \\ ih_{2}^{\frac{1}{2}} \\ 0 \\ -ih_{2}^{-\frac{1}{2}} 
            \end{pmatrix}
    \ \ \ 
    u_{4}(z)=\frac{1}{\sqrt{2}}\begin{pmatrix}
            ih_{1}^{-\frac{1}{2}} \\ 0 \\ -ih_{1}^{\frac{1}{2}} \\ 0 
            \end{pmatrix} \ .
\]
Moreover, the connection $\nabla$ preserves the symplectic form $\omega_{\R}$ on $\mathcal{E}_{\R}$, which in the local frame $\{u_{1}, u_{2}, u_{3}, u_{4}\}$ is given by
\[
    \omega_{\R}=\begin{pmatrix}
            0 & -\mathrm{Id} \\
            \mathrm{Id} & 0 
            \end{pmatrix} \ ,
\] 
and consequently $\nabla$ preserves the volume form $-\frac{1}{2}\omega_{\R}\wedge \omega_{\R}=u_{1}\wedge u_{2}\wedge u_{3}\wedge u_{4}$ on $\mathcal{E}_{\R}$.
This data induces a $\nabla$-parallel bilinear form of signature $(3,3)$ on the bundle $\Lambda^{2}\mathcal{E}_{\R}$ of skew-symmetric 2-tensors on $\mathcal{E}_{\R}$ by setting
\[
    -2\langle \phi, \psi \rangle u_{1}\wedge u_{2}\wedge u_{3}\wedge u_{4}= \phi \wedge \psi \ 
\]
for every $\phi, \psi \in \Lambda^{2}\mathcal{E}_{\R}$. Let $\omega^{*}$ be the skew-symmetric 2-tensor defined by the property that 
\[
    -2\langle \omega^{*}, v \wedge w \rangle = \omega_{\R}(v,w)
\]
for every $v,w \in \mathcal{E}_{\R}$. Because $\omega_{\R}$ is parallel for $\nabla$, the  
5-dimensional subbundle $V \subset \Lambda^{2}\mathcal{E}_{\R}$ obtained by taking the orthogonal to $\omega^{*}(z)$ at every $z\in S$ is preserved by $\nabla$ and is naturally endowed with a parallel bilinear form of signature $(2,3)$. Consider now the section $f(z)=u_{1}(z)\wedge u_{3}(z)-u_{2}(z)\wedge u_{4}(z)$, which takes values into the space of unit time-like vectors of $V$. If we denote by $\rho:\pi_{1}(S) \rightarrow \Sp(4,\R)$ the holonomy of the flat connection $\nabla$, the section $f$ induces a $\rho$-equivariant map $\tilde{f}:\tilde{S} \rightarrow \h^{2,2}$ by fixing a base point $z_{0} \in \tilde{S}$ and setting $\tilde{f}(z)$ to be the $\nabla$-parallel transport of a lift of $f$ to the fiber over $z_{0}$. Here we identified $\h^{2,2}$ with the set of unit time-like vectors of $V_{z_{0}}$. It can be shown (\cite{TW}, \cite{Baraglia_thesis}) that $\tilde{f}$ is conformal and harmonic and thus parametrizes a maximal surface in the pseudo-hyperbolic space $\h^{2,2}$. Moreover, from the computations in \cite[Proposition 4.5]{TW}, we know that the induced metric on the maximal surface $\tilde{f}(\tilde{S})/\rho(\pi_{1}(S))$ is $g=4h_{1}^{-1}h_{2}=4e^{\psi_{1}-\psi_{2}}\sigma$, where $\psi_{1}$ and $\psi_{2}$ are the solutions to Equation (\ref{eq:cyclic_sp4_global}).

\subsection{Geodesic currents}
Historically, the notion of a geodesic current was first defined by Bonahon (\cite{Bonahon_currents}, \cite{bonahonbouts}) as a way to study the Thurston compactification of Teichm{\"u}ller space. In higher Teichm{\"u}ller theory, geodesic currents have been used extensively to study limits of representations (see \cite{BIPP}, \cite{burger2020real}, \cite{martone2019sequences}, \cite{Charles_dPSL}, \cite{OT}). Here we provide a cursory review of geodesic currents.\\

Let $S$ be a closed surface of negative Euler characteristic, and let $S$ be endowed with a fixed auxiliary hyperbolic metric $\sigma$. By the Cartan-Hadamard theorem, the universal cover $(\widetilde{S}, \widetilde{\sigma})$ is isometrically diffeomorphic to $\mathbb{H}^{2}$. Consider then the space $\mathpzc{G}(\widetilde{S}, \widetilde{\sigma})$ of unoriented biinfinite geodesics of $(\widetilde{S}, \widetilde{\sigma})$. It is not difficult to see that
$$\mathpzc{G}(\widetilde{S}, \widetilde{\sigma}) \cong( (S^{1} \times S^{1}) - \Delta) \backslash \mathbb{Z}_{2}.$$
A \textit{geodesic current} is a $\pi_{1}(S)$-equivariant Radon measure on $\mathpzc{G}(\widetilde{S}, \widetilde{\sigma})$. The space $\mathcal{C}(S)$ of geodesic currents is endowed with the weak-$*$ topology, that is, $\mu_{n} \to \mu$ if and only if
$$\int_{\mathcal{G}(\widetilde{S})} f d\mu_{n} \to \int_{\mathcal{G}(\widetilde{S})} f d \mu$$
for any continuous function $f$ on $\mathpzc{G}(\widetilde{S},\tilde{\sigma})$ of compact support. While it may appear that the space of geodesic currents strongly depends upon the initial choice of an auxiliary metric, we remark that if a different hyperbolic were chosen, the lifts of the two metrics to their universal covers would be $\pi_{1}(S)$-equivariantly quasi-isometric to one another, and this quasi-isometry would extend to their ideal boundaries. Hence, there would be a $\pi_{1}(S)$-equivariant homeomorphism between the two spaces of bi-infinite unoriented geodesics. \\

A natural first example of a geodesic current is obtained by taking a single closed geodesic $\gamma$ on $(S, \sigma)$ and lifting it to $(\widetilde{S}, \widetilde{\sigma})$. The Dirac measure supported on the lifts of $\gamma$ is a geodesic current, denoted $\delta_{\gamma}$. This construction may be repeated to show that weighted multicurves and measured laminations also give rise to geodesic currents. Moreover, weighted closed curves are dense in the space of geodesic currents. \\

The space of geodesic currents comes equipped with a natural continuous bilinear form $i: \mathcal{C}(S) \times \mathcal{C}(S) \to \mathbb{R}_{\geq 0}$, which restricts to Thurston's intersection form for measured laminations. In particular, when evaluated on pairs of geodesic currents $\delta_{\alpha}$ and $\delta_{\beta}$ associated to simple closed curves, the bilinear pairing $i$ gives the geometric intersection number of $\alpha$ and $\beta$. Moreover, measured laminations can be characterized as geodesic currents with vanishing self-intersection number (\cite{Bonahon_currents}).\\

Let $C(S)$ denote the set of isotopy classes of closed curves on S. Then by the preceding discussion, the set $C(S)$ may be regarded as a subset of $\mathcal{C}(S)$. The marked length spectrum of a geodesic current $\mu$ is the list of pairs 
$$ \{\gamma, i(\mu, \delta_{\gamma}) \}_{\gamma \in C(S)}. $$
Here, the marking comes from recording the data of the curve $\gamma$ alongside the intersection number $i(\mu, \delta_{\gamma})$. The unmarked length spectrum lists solely the intersection numbers, without any mention of the curve $\gamma$. In what follows, by length spectrum of a geodesic current $\mu$, it will be understood that it is the marked length spectrum of $\mu$. The distinction is a fairly important one, as marked length spectra distinguish geodesic currents, while the unmarked spectrum does not (\cite{Otal}). \\

An important facet of geodesic currents is that it contains a host of several geometric objects. Historically, the first non-trivial geodesic currents constructed were those coming from hyperbolic metrics, called Liouville currents (\cite{Bonahon_currents}), whose length spectrum is given by the hyperbolic length of the unique geodesic representative of $\gamma \in \pi_{1}(S)$.  Later it was shown (\cite{Otal}) that more generally, all negatively curved Riemannian metrics on $S$ can be realized as geodesic currents in the above sense. The self-intersection number in this case is always equal to $\frac{\pi}{2}\Area(S)$ (\cite{Otal}). More recently, this construction has been generalized to locally CAT(-1) metrics on $S$ (\cite{Hers_Paulin}) and non-positively curved Riemannian metrics with conic singularities (\cite{Frazier_thesis}, \cite{Rigidity_flat}).\\

The space $\mathbb{P}\mathcal{C}(S)$ of projectivized currents is defined by removing the zero current from $\mathcal{C}(S)$ and identifying currents that are positive multiple of each other. Bonahon (\cite{Bonahon_currents}) proved the space $\mathbb{P}\mathcal{C}(S)$ of projectivized currents is compact. 

\section{Length spectrum compactification of quartic differential metrics}

\subsection{Quartic differential metrics}
 Let $q$ be a holomorphic quartic differential over a Riemann surface $X$. In the local coordinates $z$, the differential $q$ may be written locally as $q(z) dz^{4}$, for some holomorphic function $q(z)$ on $X$. A singular flat metric $|q|^{\frac{1}{2}}$ is obtained from $q$, by first taking the absolute value $|q|$ and then taking the square root. The metric is smooth away from the zeros of $q$. Holomorphicity of the differential away from the zeros is precisely the condition that the curvature of $|q|^{\frac{1}{2}}$ is zero. At a zero of $q$ of order $k$, the metric $|q|^{\frac{1}{2}}$ has a conic singularity of angle $2\pi + \frac{k \pi}{2}$. However, there is a converse to the construction above. Given any flat metric $m$ with conic singularities with angle $2\pi + \frac{k \pi}{2}$ and holonomy in the subgroup of $\SO(2)$ generated by a rotation of angle $\frac{\pi}{2}$, there is a complex structure on $S$ and a holomorphic quartic differential $q$ so that $|q|^{\frac{1}{2}} = m$  (\cite{strata}).\\


Let $\mathcal{Q}^{4}(S)$ denote the bundle of holomorphic quartic differentials over Teichm{\"u}ller space. Then there is a natural circle action on $\mathcal{Q}^{4}(S)$, which fixes the Riemann surface structure, but multiplies a quartic differential by $e^{i \theta}$. If Flat$_{4}({S})$ denotes the space of quartic differential metrics on $S$, then Flat$_{4}({S}) \cong \mathcal{Q}^{4}(S)/S^{1}$, as $|q| = |e^{i\theta} q|$. Likewise, if Flat$_{4}^{1}({S})$ denotes the space of unit area quartic differential metrics, then Flat$_{4}^{1}({S}) \cong \mathcal{Q}^{4}(S)/\mathbb{C}^{*}$. Notice that the space Flat$_{2}^{1}({S})$ of unit area quadratic differential metrics is a proper subset of Flat$_{4}^{1}({S})$; from a quadratic differential $r$, the differential $r \otimes r$ is a quartic differential. 

\subsection{Geometric convergence of quartic differentials}

In this section we review the notion of geometric convergence for Riemann surfaces and holomorphic quartic differentials. This will closely follow work of McMullen \cite{McMullen_Poincareseries} in the setting of holomorphic quadratic differentials. Indeed, most of the results generalize immediately to higher order differentials. However, some care will be required to obtain the analogue of Proposition A.3.2 of \cite{McMullen_Poincareseries}. Extending these results specifically to the case of holomorphic cubic differentials was done in \cite{OT}.\\

Let $\kappa \in [-1, 0)$ and consider the metric
$$ g_{\kappa} = \left(\frac{4}{4+\kappa |z|^{2}} \right)^{2} |dz|^{2}, $$
which is a complete metric of constant curvature $\kappa$ on the domain $U_{\kappa} = \{z \in \mathbb{C} : |z| < R\}$, where $R= \frac{2}{\sqrt{- \kappa}}$. For $\kappa =0$, the metric $g_{\kappa}$ is a complete metric on the entire complex plane $\mathbb{C} = U_{0}$. In both instances, the pair $(U_{\kappa}, g_{\kappa})$ is a complete Riemannian manifold.\\

Let $\mathcal{X}$ denote the space of pairs $(U_{\kappa} , \Gamma)$, where $\Gamma$ is a discrete subgroup of the group of M{\"o}bius transformations acting freely on $U_{\kappa}$. From the data of $(U_{\kappa}, \Gamma)$, one may construct a framed Riemannian manifold as follows: the quotient $X = U_{\kappa} / \Gamma$ is a complete Riemannian manifold; the image of the unit vector in $U_{\kappa}$ at the origin pointing along the real axis, gives a framing $v$ on $X$. We may rescale the metric to ensure the injectivity radius at $v$ is at least 1. As $X$ comes endowed with a natural complex structure (independent of scaling), the aforementioned construction gives a framed Riemann surface $(X,v)$ with injectivity radius at least 1 at $v$. Conversely, given a Riemann surface $X$ admitting a complete metric of constant curvature $\kappa$ along with a framing $v$ with injectivity at least one, this uniquely determines an element of $\mathcal{X}$.\\

The \textit{geometric topology} on $\mathcal{X}$ is defined as follows: a sequence of pairs $(U_{\kappa_{n}}, \Gamma_{n})$ converges to $(U_{\kappa}, \Gamma)$ if and only if $\kappa_{n} \to \kappa$ and $\Gamma_{n} \to \Gamma$ in the Hausdorff topology on closed subsets of $\mathbb{P}\SL(2, \mathbb{C})$. We record the following well-known results.

\begin{prop}
The space $\mathcal{X}$ endowed with the geometric topology is compact.
\end{prop}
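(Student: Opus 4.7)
The plan is to prove sequential compactness by combining two classical facts: the compactness of the parameter $\kappa \in [-1,0]$ and Chabauty's theorem that the space of closed subsets of a locally compact Hausdorff space (here $\Pp\SL(2,\bbC)$), equipped with the Hausdorff topology on compacts, is itself compact. Given a sequence $(U_{\kappa_n},\Gamma_n) \in \mathcal{X}$, I would first pass to a subsequence so that $\kappa_n \to \kappa_\infty \in [-1,0]$, and then pass to a further subsequence so that the closed subsets $\Gamma_n \subset \Pp\SL(2,\bbC)$ converge in the Hausdorff topology to a closed subset $\Gamma_\infty$. The domains $U_{\kappa_n}$ converge to $U_{\kappa_\infty}$ as open subsets of $\bbC$, and the metrics $g_{\kappa_n}$ converge smoothly to $g_{\kappa_\infty}$ on compact subsets, so it remains to verify that the candidate limit $(U_{\kappa_\infty}, \Gamma_\infty)$ lies in $\mathcal{X}$.

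Next I would verify that $\Gamma_\infty$ is a subgroup of $\Pp\SL(2,\bbC)$. This is a standard property of Chabauty limits: if $\alpha,\beta \in \Gamma_\infty$, there exist sequences $\alpha_n,\beta_n \in \Gamma_n$ converging to them, and continuity of the group operations on $\Pp\SL(2,\bbC)$ yields $\alpha_n\beta_n^{-1} \in \Gamma_n$ converging to $\alpha\beta^{-1}$, which then lies in the closed set $\Gamma_\infty$. The next and most important step is to use the injectivity radius bound to rule out pathological degenerations. For each $n$, every non-identity element $\gamma \in \Gamma_n$ moves the origin in $(U_{\kappa_n}, g_{\kappa_n})$ by a distance of at least $2$ (twice the injectivity radius lower bound on the quotient at the framing). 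Since distances in $(U_{\kappa_n}, g_{\kappa_n})$ on compact subsets converge uniformly to distances in $(U_{\kappa_\infty}, g_{\kappa_\infty})$, this lower bound persists in the limit. Consequently, no sequence of non-identity elements of $\Gamma_n$ can converge to the identity, so $\Gamma_\infty$ is discrete; moreover the same bound shows $\Gamma_\infty$ acts freely on $U_{\kappa_\infty}$ and that the injectivity radius of the resulting framed quotient at $v$ is still at least $1$.

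Finally I would check that $\Gamma_\infty$ actually preserves the domain $U_{\kappa_\infty}$. In the case $\kappa_\infty < 0$, each $\gamma \in \Gamma_\infty$ is a limit of Möbius transformations preserving the discs $U_{\kappa_n}$; since $\partial U_{\kappa_n} \to \partial U_{\kappa_\infty}$ and $\Pp\SL(2,\bbC)$ acts continuously on $\bbC\bbP^1$, the limit $\gamma$ preserves $\partial U_{\kappa_\infty}$, hence preserves the disc $U_{\kappa_\infty}$. In the case $\kappa_\infty = 0$, the discs $U_{\kappa_n}$ exhaust $\bbC$, and one needs to show $\Gamma_\infty$ consists of parabolic or elliptic-free isometries of $\bbC$; this is where one uses that any loxodromic (or large elliptic) Möbius transformation would eventually move the framing too little or have a fixed point in a compact subset of $\bbC$, contradicting the injectivity radius bound as above.

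The main obstacle is the transition $\kappa_n \to 0$, where the model spaces $U_{\kappa_n}$ change type from hyperbolic discs to the flat plane. Here both the underlying domain and the allowed symmetry group shrink in a coordinated way, and one must argue that the Chabauty limit of the discrete groups $\Gamma_n \subset \Pp\SL(2,\bbC)$ lands in the correct subgroup of Euclidean motions preserving $\bbC$; the uniform injectivity radius lower bound at the framing is precisely what makes this transition well-behaved, ruling out the appearance of extra hyperbolic-type elements in the limit. All other steps reduce to routine applications of Chabauty's theorem and the smooth convergence of the constant-curvature models.
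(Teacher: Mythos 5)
The paper does not actually prove this proposition; it records it as a well-known fact and defers to the appendix of McMullen's paper on Poincar\'e series, where essentially your argument appears. Your outline --- compactness of $\kappa\in[-1,0]$, Chabauty compactness of the space of closed subsets of $\Pp\SL(2,\bbC)$, the subgroup property of Chabauty limits, and the uniform lower bound ``every nontrivial $\gamma\in\Gamma_n$ displaces the origin by $g_{\kappa_n}$-distance at least $2$'' to control discreteness and the degeneration $\kappa_n\to 0$ --- is the standard and intended route, and the discussion of why the limit group preserves $U_{\kappa_\infty}$ (boundary circles converge when $\kappa_\infty<0$; exteriors of the discs collapse to $\{\infty\}$ when $\kappa_\infty=0$, forcing limits to be affine) is correct.

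One step is asserted too quickly: you claim that the displacement bound at the origin also shows $\Gamma_\infty$ acts freely. By itself it does not --- a limit element could in principle fix a point far from the framing while still moving the origin by at least $2$. The standard repair is to note that freeness of the $\Gamma_n$-actions means the $\Gamma_n$ contain no elliptic elements; since being elliptic is an open condition (trace in $(-2,2)$ for the disc-preserving case, and derivative of modulus $\ne 1$ at the fixed point in the affine case), and since limits of isometries of the uniformly converging metrics $g_{\kappa_n}$ are isometries of $g_{\kappa_\infty}$, no elliptic element can arise in $\Gamma_\infty$. The same ``limits of isometries are isometries'' observation is also the cleanest way to finish your $\kappa_\infty=0$ case, replacing the slightly hand-wavy appeal to loxodromic elements ``moving the framing too little.'' With these two points tightened, the proof is complete.
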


If $\mathcal{X}_{g,n} \subset \mathcal{X}$ denotes the space of finite type Riemann surfaces of genus $g$ with $n$ punctures, one may obtain its compactification by taking its closure in $\mathcal{X}$. 

\begin{prop}
For $n > 0$,
$$\overline{\mathcal{X}_{g,n}} = \bigcup\, \{\mathcal{X}_{h,m}: 2h+m \leq 2g+n, 1 \leq m \},$$
while for $n=0$ and g>0,
$$\overline{\mathcal{X}_{g,0}} = \mathcal{X}_{g,0} \cup \overline{\mathcal{X}_{g-1, 2}},$$
and
$$\overline{\mathcal{X}_{0,0}} = \mathcal{X}_{0,0} \cup \mathcal{X}_{0,1}.$$
\end{prop}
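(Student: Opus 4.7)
The plan is to prove both inclusions separately, using the Margulis thick--thin decomposition and pointed Gromov--Hausdorff convergence. I focus on the main case $n>0$; the closed case $\mathcal{X}_{g,0}$ and the sphere case $\mathcal{X}_{0,0}$ will follow from the same argument with a little extra bookkeeping.

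For the inclusion $\supseteq$ in the case $n>0$, given $(Y,w)\in\mathcal{X}_{h,m}$ with $2h+m\leq 2g+n$ and $m\geq 1$, I construct a sequence $(X_k,v_k)\in\mathcal{X}_{g,n}$ with $(X_k,v_k)\to(Y,w)$ via a plumbing construction. Choose a subset of the $m$ cusps of $Y$ and either pair them with each other (each pair raises the genus by $1$, corresponding to a non-separating pinch in the limit) or pair them with cusps of an auxiliary finite-type piece (gluing on a separated component, corresponding to a separating pinch). Between paired cusps, insert a thin hyperbolic collar whose core geodesic has length $\ell_k\to 0$. The inequality $2h+m\leq 2g+n$ provides exactly the room needed to choose the pairings and auxiliary pieces so that the resulting topological type is $\mathcal{X}_{g,n}$, and $m\geq 1$ ensures there is at least one cusp of $Y$ on which to plumb. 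As $\ell_k\to 0$, the uniformizing Kleinian groups converge, after a conjugation keeping the frame $v_k$ in standard position, to the group uniformizing $Y$.

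For the converse inclusion $\subseteq$, take $(X_k,v_k)\to(Y,w)$ in $\mathcal{X}$ with $X_k\in\mathcal{X}_{g,n}$. Fix a Margulis constant $\epsilon_0<1$. By the Margulis lemma, the $\epsilon_0$-thin part of each $X_k$ is a disjoint union of cusp and collar neighborhoods of simple closed geodesics of length less than $\epsilon_0$, and the number of such short curves is bounded in terms of $2g-2+n$. Since the injectivity radius at $v_k$ is at least $1>\epsilon_0$, the basepoint lies in the $\epsilon_0$-thick part. Passing to a subsequence, one may assume each short curve either has length bounded below by some $\delta>0$ or pinches (length $\to 0$). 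Pointed Gromov--Hausdorff convergence of the $\delta$-thick parts, combined with the fact that a collar with pinching core converges to a pair of cusps, identifies $(Y,w)$ with the component containing the basepoint of the surface obtained from $X_k$ by pinching the pinching family. An Euler-characteristic count then gives $2h+m\leq 2g+n$: non-separating pinches lying entirely in the basepoint component preserve $2h+m$, while separating pinches or pinches outside the basepoint component strictly decrease it. The bound $m\geq 1$ follows from $n\geq 1$, or from the fact that any pinching creates at least one new cusp in the basepoint component.

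The main technical obstacle is rigorously transferring the Hausdorff convergence of Kleinian subgroups of $\mathbb{P}\SL(2,\mathbb{C})$ to pointed Gromov--Hausdorff convergence of the associated thick parts, and ruling out spurious additional components or infinitely many cusps appearing in the geometric limit from collars whose cores pinch. This combines the compactness of $\mathcal{X}$ from the preceding proposition, the uniform geometric control on $\epsilon_0$-thick parts provided by the Margulis lemma, and the explicit collapsing geometry of a hyperbolic collar of length $\ell_k\to 0$. The remaining two cases follow at once: for $\mathcal{X}_{g,0}$ any nontrivial geometric limit requires at least one pinching, after which the basepoint component has at least one cusp and complexity at most $2g$, so by the $n>0$ case it lies in $\overline{\mathcal{X}_{g-1,2}}$; while $\mathcal{X}_{0,0}$ is degenerate in our setting, since the sphere admits no complete metric of constant non-positive curvature, and the only geometric limit available is the flat once-punctured sphere representing $\mathcal{X}_{0,1}$.
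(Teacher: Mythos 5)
The paper does not actually prove this proposition: it records it as a well-known fact, deferring to the appendix of McMullen's paper on Poincar\'e series (and its adaptation in \cite{OT}), so there is no in-paper argument to compare against. Judged on its own, your outline follows the standard route (plumbing for $\supseteq$, Margulis thick--thin decomposition and an Euler-characteristic count for $\subseteq$), and for limits that remain hyperbolic the skeleton is sound: the count ``non-separating pinches preserve $2h+m$, separating pinches strictly decrease it'' is correct, since an essential separating pinch replaces $2h+m$ by two quantities summing to $2h+m+2$, each at least $3$.

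The genuine gap is that your argument only treats the case where the limit has curvature bounded away from $0$. Recall the normalization: $\kappa\in[-1,0]$ and the metric is rescaled so that the injectivity radius at the frame is at least $1$; a sequence of hyperbolic surfaces whose basepoint has (unrescaled) injectivity radius tending to $0$ is therefore rescaled to curvature $\kappa_k\to 0$, and its geometric limit is one of the flat strata. These strata are an essential part of the statement --- $\mathcal{X}_{0,1}$ (the plane) and $\mathcal{X}_{0,2}$ (the punctured plane) always satisfy $2h+m\le 2g+n$ and must be shown to lie in the closure, while the torus $\mathcal{X}_{1,0}$, which also satisfies the numerical inequality, must be excluded (this is precisely what the condition $m\ge 1$ encodes). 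Your $\subseteq$ argument starts from ``the basepoint lies in the $\epsilon_0$-thick part,'' which fails in exactly this regime: when $\kappa_k\to 0$ the basepoint sits in a degenerating collar or cusp, no pinched hyperbolic component contains it, and the identification of the limit must instead use that the stabilizer of the thin component is cyclic, so the limit group is cyclic or trivial and the limit is $\mathbb{C}^{*}$ or $\mathbb{C}$ (never a torus). Symmetrically, your plumbing construction for $\supseteq$ produces hyperbolic limits only; to reach $\mathcal{X}_{0,1}$ and $\mathcal{X}_{0,2}$ you must additionally let the frame descend into a collar whose core length tends to $0$ (or deep into a cusp) at a rate chosen relative to the core length, so that after renormalization the group converges to $\{1\}$ or to an infinite cyclic group. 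Until both directions of this $\kappa\to 0$ case are incorporated, the proof covers only the hyperbolic part of the boundary; the same omission is why the closed-surface and genus-zero cases cannot be dismissed as ``extra bookkeeping,'' since for tori the entire degeneration is of this flat type.
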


The hyperbolic Riemann surfaces are the finite type Riemann surfaces of genus $g$ with $n$ punctures for which $2-2g-n <0$. The compactification result then says the hyperbolic Riemann surfaces are compactified by the plane, the punctured plane, and hyperbolic surfaces of lower complexity.\\

The bundle $\mathcal{Q}^{4}$ of holomorphic quartic differentials over $\mathcal{X}$ is the space of triples $(U_{\kappa},\Gamma, q)$, where $(U_{\kappa},\Gamma) \in \mathcal{X}$ and $q$ is a holomorphic quartic differential for the Riemann surface $U_{\kappa}/\Gamma$. This space is endowed with the geometric topology as follows: a sequence of holomorphic quartic differentials $q_{n}$ converges geometrically to $q$ if and only if the domain Riemann surfaces converge geometrically in $\mathcal{X}$ and the lifts of $q_{n}$ to the universal covers converge uniformly on compact sets to the lift of $q$. We denote by $\mathcal{Q}_{g}^{4}\subset \mathcal{Q}^{4}$ the subspace of holomorphic quartic differentials on a surface of genus $g$. The main result we need for the rest of the paper is the following
\begin{thm}\label{thm:geometric_limit} The space $\mathbb{P}\mathcal{Q}_{g}^{4}$ has compact closure in $\mathbb{P}\mathcal{Q}^{4}$. 
\end{thm}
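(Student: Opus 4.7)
The plan is to follow the strategy used by McMullen for quadratic differentials (\cite{McMullen_Poincareseries}, Proposition A.3.2) and adapted to the cubic case in \cite{OT}, with the natural adjustment of exponents for order-four differentials. Given a sequence $[(U_{\kappa_n}, \Gamma_n, q_n)] \in \mathbb{P}\mathcal{Q}_g^4$, the compactness of $\mathcal{X}$ in the geometric topology lets me pass to a subsequence along which the underlying framed Riemann surfaces $(U_{\kappa_n}, \Gamma_n)$ converge geometrically to a pair $(U_\kappa, \Gamma) \in \overline{\mathcal{X}_g}$. It remains to choose projective representatives $q_n$ whose $\Gamma_n$-invariant lifts $\tilde q_n$ subconverge uniformly on compact subsets of $U_\kappa$ to a $\Gamma$-invariant nonzero holomorphic function.

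The natural normalization uses the pointwise norm weighted for a quartic differential. Since $X_n = U_{\kappa_n}/\Gamma_n$ is a closed surface of genus $g$, the continuous function $\|q_n\|(p) := |q_n(p)|/g_{\kappa_n}(p)^2$ attains its supremum at some $p_n \in X_n$. I rescale each $q_n$ within its projective class so that $\sup_{X_n}\|q_n\| = 1$, then replace the framing so that the basepoint is a lift $\tilde p_n$ of $p_n$; concretely, I conjugate $\Gamma_n$ and transport $\tilde q_n$ by a M\"obius transformation of $U_{\kappa_n}$ carrying $\tilde p_n$ to the origin, rescaling the metric if necessary to keep the injectivity radius there at least $1$. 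After a further diagonal extraction, the rechosen framed surfaces again converge geometrically in $\overline{\mathcal{X}}$, and the new lifts satisfy
\[
|\tilde q_n(z)| \le g_{\kappa_n}(z)^2 \quad \text{on } U_{\kappa_n},
\]
with equality at the origin.

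Since $g_{\kappa_n} \to g_\kappa$ uniformly on compact subsets of $U_\kappa$, the family $\{\tilde q_n\}$ is uniformly bounded on every such compact set. Montel's theorem produces a subsequential locally uniform limit $\tilde q$, holomorphic on $U_\kappa$. The $\Gamma$-invariance of $\tilde q$ follows from Hausdorff convergence $\Gamma_n \to \Gamma$ together with the $\Gamma_n$-invariance of $\tilde q_n$, while the equality $|\tilde q(0)| = g_\kappa(0)^2$ forces $\tilde q \not\equiv 0$. Hence $\tilde q$ descends to a nonzero holomorphic quartic differential on $U_\kappa/\Gamma$ whose projective class is the required limit of $[q_n]$ in $\mathbb{P}\mathcal{Q}^4$.

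The hard part is the framing-rechoice step: one must track how the basepoints $p_n$ interact with the thin parts of $X_n$, since if $p_n$ drifts into thin necks the geometric limit of the rechosen framed surfaces moves into a lower-dimensional stratum of $\overline{\mathcal{X}_g}$ (punctured surfaces of smaller complexity, or $\mathbb{C}$ and $\mathbb{C}^\ast$ in the lowest-genus cases). In each of these strata one must verify that the pointwise bound above still yields a nontrivial holomorphic limit; for instance, on $\mathbb{C}$ with $\kappa=0$ it reduces to a bounded entire function and Liouville's theorem furnishes a nonzero constant quartic differential. The bookkeeping is parallel to \cite{OT}, with the weight exponent in the normalized pointwise norm changing from $3/2$ (cubic) to $2$ (quartic).
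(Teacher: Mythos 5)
Your normal-families framework is the right skeleton, and the exponent change (measuring a quartic differential against $g_{\kappa}^{2}$) is correct. But there is a genuine gap at the ``framing-rechoice step,'' and it is not bookkeeping --- it is the whole difficulty. A point of $\mathcal{X}$ is a \emph{framed} surface $(U_{\kappa},\Gamma)$, so conjugating $\Gamma_{n}$ to move the basepoint to the maximum point $p_{n}$ of $\|q_{n}\|$ produces a \emph{different} sequence in $\mathbb{P}\mathcal{Q}^{4}_{g}$. Compactness of the closure requires the \emph{original} sequence $[(U_{\kappa_{n}},\Gamma_{n},q_{n})]$, with its given framings, to subconverge; your argument only shows that the re-framed sequence does. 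The two statements genuinely differ: if the $L^{\infty}$-mass of $q_{n}$ concentrates on thin collars, or on components of the stable limit not containing the original basepoint, then the sup-normalized lifts converge to $0$ locally uniformly on $U_{\kappa}$, and one must produce some other sequence of scalars $c_{n}$ for which $c_{n}\tilde q_{n}$ stays locally uniformly bounded \emph{and} has a nonzero limit. Normalizing by the supremum over a fixed ball about the original basepoint gives no a priori control on larger compact sets, and an exhaustion/diagonal argument by itself cannot rule out that every choice of $c_{n}$ degenerates.

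This missing case is exactly where the Poincar\'e series enters, and it is the one ingredient the paper actually supplies (the remainder of the argument is cited from \cite{McMullen_Poincareseries} and \cite{OT}). When the mass escapes into the degenerating annuli, the leading behavior of $q_{n}$ there is modeled on the pushed-forward differentials $(p_{i})_{*}\phi(Y_{i},w_{i})=(p_{i})_{*}h^{*}\left(\frac{dz^{4}}{z^{4}}\right)$ of the annular covers of the pinching curves, and one needs these Poincar\'e series to be nonzero and linearly independent to conclude that the renormalized limit does not vanish; this is Proposition A.3.2 of \cite{McMullen_Poincareseries} and Proposition 4.10 of \cite{OT}, and the paper flags it as the only step that does not carry over verbatim. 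For quartic differentials the non-vanishing is not formal: Lemma 2.4 proves it by passing through the $\Sp(4,\R)$-Hitchin component, using that the length functions $\ell_{1},\ell_{2}$ of a pants curve system belong to a coordinate system and invoking the Labourie--Wentworth generalization of Gardiner's formula to identify $dm_{i}$ at the Fuchsian locus with the pairing against $(p_{i})_{*}\phi(Y_{i},w_{i})$. None of this appears in your proposal, so as written it establishes a weaker statement (subconvergence after re-framing) and omits the step that required new input.
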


We leave the details of the proof for the interested reader, as it follows the same arguments of \cite[Appendix]{McMullen_Poincareseries} and \cite[Appendix]{OT}. In both papers there is only one technical statement (\cite[Proposition A.3.2]{McMullen_Poincareseries} and \cite[Proposition 4.10]{OT}) about the linear independence of Poincar\'e series of quadratic and cubic differentials, where hyperbolic and convex real-projective geometry are used. Here we explain how to obtain the analogous result for quartic differentials.\\

Let us first introduce some notation. Let $(Y,w)\in \mathcal{X}$ be a framed Riemannian surface with constant curvature $\kappa$ and fundamental group $\mathbb{Z}$. Standard models for these surfaces are described in \cite[Section 4.3]{OT}. Choose a biholomorphism 
\[
h:Y \rightarrow A(r,R) =\{z \in \mathbb{C} \ | \ r<|z|<R\}
\]
with $0\leq R \leq +\infty$. We set $\phi(Y,w) =h^{*}\left(\frac{dz^{4}}{z^{4}}\right)$. Let $S$ be a finite system of pairwise distinct, non-trivial isotopy classes of disjoint simple closed curves on a framed Riemannian surface $(X,v)\in \mathcal{X}$. Each element $[\gamma_{i}]\in S$ determines a covering $p_{i}: (Y_{i},w_{i}) \rightarrow (X,v)$, where $Y_{i}$ is a Riemann surface with fundamental group $\mathbb{Z}$. We denote
\[
    \theta(X,v,S)=\sum_{i} (p_{i})_{*}\phi(Y_{i}, w_{i})
\]
the quartic differential on $X$ associated to the system of curves $S$. 

\begin{lemma} The differential $\theta(X,v,S)$ is holomorphic with poles of order at most $4$ at the punctures of $X$. Moreover, $\theta\neq 0$, whenever $X$ is closed or $X$ has punctures and $S$ contains a peripheral curve.
\end{lemma}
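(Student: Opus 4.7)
I plan to treat the lemma in two stages --- first the holomorphy and pole order bound, then the nonvanishing, which splits into an easy residue argument in the punctured case and a more delicate estimate in the closed case. The latter is the main obstacle.

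For the analytic half, write $\tilde\phi_i$ for the lift of $\phi(Y_i,w_i)=h_i^{*}(dz^4/z^4)$ to the universal cover $\tilde X$: this is a $\langle\tilde\gamma_i\rangle$-invariant holomorphic quartic differential, and the pushforward
\[
(p_i)_*\phi(Y_i,w_i)=\sum_{[g]\in\pi_1(X)/\langle\gamma_i\rangle} g^{*}\tilde\phi_i
\]
is the associated relative Poincar\'e series. For weight-$4$ differentials on a hyperbolic surface this series converges absolutely and locally uniformly by the same estimates used verbatim in \cite[Appendix]{McMullen_Poincareseries} and \cite[Appendix]{OT}, defining a holomorphic quartic differential on $X$ minus punctures. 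At a puncture $x_0$ of $X$, if $\gamma_i$ is peripheral to $x_0$ then $p_i$ restricts to a biholomorphism from a horoball neighborhood of the cusp of $Y_i$ over $x_0$ onto a punctured neighborhood of $x_0$, so the order-$4$ pole of $dz^4/z^4$ at that cusp transfers to an order-$4$ pole of $(p_i)_*\phi(Y_i,w_i)$ at $x_0$; the remaining cosets contribute only a holomorphic remainder by absolute convergence. If $\gamma_i$ is not peripheral to $x_0$, every preimage of $x_0$ in $Y_i$ lies in the interior of the annulus, where $\phi(Y_i,w_i)$ is holomorphic, and the local contribution is regular. Summing over $i$ yields the pole order bound.

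For the nonvanishing in the punctured case, suppose some $\gamma_{i_0}\in S$ is peripheral to a puncture $x_0$ of $X$, and extract the $w^{-4}$ coefficient of $\theta$ at $x_0$ in a uniformizing coordinate $w$. By the previous paragraph only peripheral curves contribute to this coefficient, and by disjointness of the curves in $S$ at most one element of $S$ is peripheral to $x_0$, namely $\gamma_{i_0}$. The coefficient is then the nonzero $z^{-4}$-coefficient of $dz^4/z^4$ transported via $h_{i_0}$, so $\theta\neq 0$.

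In the closed case every contribution to $\theta$ is holomorphic on all of $X$ and no principal part is available. My plan is to evaluate $\theta$ on a tangent vector $v_0$ at a point $p_0$ lying on the closed geodesic representing $\gamma_{i_0}$ (for some $i_0\in S$), with $v_0$ tangent to that geodesic. Parametrizing the core circle of the model annulus $A(r_{i_0},R_{i_0})$ by $z=\rho e^{it}$ gives $dz^4/z^4=dt^4$, so the identity-coset contribution is a universal nonzero constant; the non-identity cosets of $(p_{i_0})_*\phi(Y_{i_0},w_{i_0})$, together with all cosets of $(p_i)_*\phi(Y_i,w_i)$ for $i\neq i_0$, are controlled by the standard weight-$4$ Poincar\'e series decay, with the disjointness of the curves in $S$ ensuring that the relevant axes in $\tilde X$ are uniformly separated from the axis of $\tilde\gamma_{i_0}$. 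The main technical hurdle is to make the strict domination of the identity term over this tail quantitative; this is where hyperbolic geometry entered \cite[Appendix]{McMullen_Poincareseries} and convex real-projective geometry entered \cite[Appendix]{OT}, and an analogous geometric input adapted to quartic differentials is required --- I expect it to come from the flat cylinder that $|dz/z|^2$ places around $\gamma_{i_0}$, possibly combined with passage to a finite cover along which the lifted geodesic has arbitrarily large injectivity radius (to which vanishing of $\theta$ is insensitive, since $p^{*}\theta=0$ forces $\theta=0$).
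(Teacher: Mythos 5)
The analytic half (convergence of the relative Poincar\'e series, order-$4$ poles at punctures) and the punctured case are fine and essentially match the paper. The problem is the closed case, which is precisely the crux of the lemma --- the paper itself flags it as the one point where the arguments of \cite[Appendix]{McMullen_Poincareseries} and \cite[Appendix]{OT} do not transfer verbatim --- and there your proposal stops at a plan. You propose to evaluate $\theta$ on the axis of $\gamma_{i_0}$ and show the identity coset dominates the tail of the Poincar\'e series, but you explicitly leave the required quantitative domination as an expectation. There is no reason to believe such a pointwise estimate holds on a fixed closed surface: the non-identity cosets of $(p_{i_0})_*\phi$ and the full series for the other $\gamma_i$ are controlled only by the geometry of $X$, which you do not get to choose, and the flat cylinder of $|dz/z|^2$ gives no lower bound on the separation of the translates in $\tilde X$. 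The finite-cover escape is also not set up correctly as stated: $p^{*}\theta(X,v,S)$ is not $\theta(\hat X,\hat v,\hat S)$ for the lifted curve system but a weighted sum of relative Poincar\'e series with weights $d^4$ depending on the degrees of the induced annulus covers, and the lifted geodesics are longer, hence have \emph{thinner} collars, so passing to covers does not obviously improve the domination. As written, the closed case is a genuine gap, not a proof.

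For comparison, the paper avoids any direct estimation of the series. It identifies $H^0(X,K^4)$ with a complement of the tangent space to the Fuchsian locus inside the tangent space to the $\Sp(4,\R)$-Hitchin component at the uniformizing representation $\rho_0$, and uses Labourie--Wentworth's generalization of Gardiner's formula \cite[Corollary 4.0.5]{LW_Fuchsianlocus} to express the pairing of $(p_i)_*\phi(Y_i,w_i)$ against $q\in H^0(X,K^4)$ as the differential at $\rho_0$ of an explicit combination $m_i$ of the eigenvalue length functions $\ell_1(\rho(\gamma_i))$, $\ell_2(\rho(\gamma_i))$, chosen so that $dm_i$ kills the Fuchsian directions. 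Vanishing of $\theta$ would then force a nontrivial linear relation among the differentials of length functions of disjoint simple closed curves, contradicting the fact that these belong to a coordinate system on the Hitchin component. If you want to complete your write-up, the most economical route is to replace your domination scheme by this deformation-theoretic argument; otherwise you must actually prove the tail estimate, which is the hard analytic content you have deferred.
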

\begin{proof}If $Y_{i}$ is an annulus, then $\phi(Y_{i},w_{i})$ is integrable and hence its push-forward is integrable. Otherwise $Y_{i}$ is a punctured plane or a punctured disk. Each puncture of $Y_{i}$ has a neighborhood that is mapped injectively to a neighborhood of a puncture in $X$, creating a pole of order $4$ for $\theta$. Since $\phi(Y_{i},w_{i})$ is integrable outside a neighborhood of the punctures, it push-forward its holomorphic. 

It is clear that $\theta$ is not identically zero when $S$ contains a peripheral curve, as $\theta$ has a pole of order $4$ at the corresponding puncture. Therefore, we are only left to consider the case of $X$ closed and $S$ consisting of homotopy classes of disjoint simple closed curves. This will follow from the fact that the quartic differentials $(p_{i})_{*}\phi(Y_{i},w_{i})$ are linearly independent. Let $\rho_{0}:\pi_{1}(X)\rightarrow \Sp(4,\R)$ denote the Fuchsian representation uniformizing $X$. Recall that a Hitchin representation $\rho:\pi_{1}(X) \rightarrow \Sp(4,\R)$ has the property that every simple closed curve is mapped to a diagonalizable matrix with distinct eigenvalues $\lambda_{1}>\lambda_{2}>\lambda_{2}^{-1}>\lambda_{1}^{-1}>0$. The length functions
\[
    \ell_{1}(\rho(\gamma_{i}))=\log\left(\frac{\lambda_{1}(\rho(\gamma_{i}))}{\lambda_{2}(\rho(\gamma_{i}))}\right) \ \ \ \text{and} \ \ \ \ell_{2}(\rho(\gamma_{i}))=\log\left(\frac{\lambda_{2}(\rho(\gamma_{i}))}{\lambda_{3}(\rho(\gamma_{i}))}\right)
\]
for a collection of curves $\gamma_{i}$ in a pants decomposition belong to a coordinate system of the Hitchin component, hence their differentials at $\rho$ are linearly independent. Consider now the function
$m_{i}(\rho)=\ell_{1}(\rho(\gamma_{i}))+\ell_{2}(\rho(\gamma_{i}))$: note that $m_{i}$ vanishes identically at the Fuchsian locus, because for any Fuchsian representation $\lambda_{1}=\lambda^{3}$ and $\lambda_{2}=\lambda$ for some $\lambda>1$. In particular, the differentials $dm_{i}$ at $\rho_{0}$ annihilate any vector tangent to the Fuchsian locus. On the other hand, Labourie and Wentworth proved a generalization of Gardiner’s formula (\cite[Corollary 4.0.5]{LW_Fuchsianlocus}) that relates $dm_{i}$ with the quartic differentials $(p_{i})_{*}\phi(Y_{i},w_{i})$. More precisely, if we identify the tangent space at $\rho_{0}$ with $H^{0}(X,K^{2})\oplus H^{0}(X, K^{4})$, there are constants $c_{1},c_{2}\neq 0$ such that for any $q \in H^{0}(X,K^{4})$ we have
\[
    dm_{i}(q)=c_{1}\Re\left(\int_{X}\bar{q} \cdot (p_{i})_{*}\phi(Y_{i},w_{i})\sigma^{-4} dA_{\sigma}\right)+ c_{2}\Re\left(\int_{X}\bar{q} \cdot (p_{i})_{*}\phi(Y_{i},w_{i})\sigma^{-4} dA_{\sigma}\right) \ ,
\]
where $\sigma$ denotes the conformal hyperbolic metric on $X$. It follows that if $\theta$ vanished identically for some system of curves $\gamma_{i}$, we would have that $\sum_{i}dm_{i}=0$ because $H^{0}(X,K^{4})$ is a supplement of the tangent space at the Fuchsian locus, but this would contradict the fact that $d\ell_{1}(\rho_{0}(\gamma_{i}))$ and $d\ell_{2}(\rho_{0}(\gamma_{i}))$ are linearly independent.

\end{proof}

\subsection{Flat metrics as geodesic currents}
Geodesic currents have been shown to encode the data of a great host of different geometric objects: Bonahon \cite{Bonahon_currents} first demonstrated the Liouville currents captured the marked length spectra of closed hyperbolic surfaces; Otal \cite{Otal} extended the result to include metrics of negative curvature. Work of Duchin, Leininger and Rafi \cite{DLR_flat} shows the flat metrics coming from holomorphic quadratic differentials may be regarded as geodesic currents, and more recent work of Bankovic and Leininger \cite{Rigidity_flat} extends the results to include nonpositively curved Euclidean cone metrics.
Hence to any holomorphic quartic differential metric $|q|^{\frac{1}{2}}$, there is a geodesic current $L_{q}$ with the property that for any closed curve $\gamma,$
$$l_{|q|^{\frac{1}{2}}}([\gamma]) = i(L_{q}, \delta_{\gamma}),$$
where $l_{|q|^{\frac{1}{2}}}([\gamma])$ denotes the length of a geodesic representative of $[\gamma]$ for the flat metric $|q|^{\frac{1}{2}}$. Recall that the geodesic representative may not be unique. This, however, only happens when they foliate a flat cylinder, so all representatives have the same length. 
Moreover, the area formula for geodesic currents coming from negatively curved metrics also holds for holomorphic $n$ differential metrics,
$$i(L_{q}, L_{q}) = \frac{\pi}{2} \Area(S, |q|^{\frac{2}{n}}) \ . $$
In particular, whenever the holomorphic quartic differential metric $|q|^{\frac{1}{2}}$ is of unit area, we have $i(L_{q}, L_{q}) = \pi/2$.

\subsection{Degeneration of flat metrics}
As the quartic differential metrics may be regarded as geodesic currents, we obtain the following natural continuous injective (see \cite{Rigidity_flat}) map
$$\Flat_{4}^{1}(S) \xrightarrow{\widehat{\mathpzc{L}}} \mathcal{C}(S) \xrightarrow{\pi} \mathbb{P}\mathcal{C}(S)\ .$$

To obtain a length spectrum compactification of the unit area quartic differentials metrics, we will take the closure of the image of $\Flat_{4}^{1}(S)$ in $\mathbb{P}\mathcal{C}(S)$. This will extend the results of Duchin, Leininger and Rafi \cite{DLR_flat} for quadratic differentials and the authors \cite{OT} for cubic differentials: the boundary of this compactification contains \textit{mixed structures}, where the flat regions will now come from integrable meromorphic quartic differentials. \\

A \textit{mixed structure} $\eta$ is a geodesic current constructed as follows. Let $S' \subset S$ be a disjoint collection of incompressible subsurfaces each with negative Euler characteristic. Each connected component of $S'$ may be regarded as a punctured surface, upon which one may choose a punctured Riemann surface structure and a meromorphic quartic differential with possible poles of order at most 3 at the punctures. The meromorphic quartic differentials will yield finite area flat metrics, with boundary curves having length zero. For each connected component of the complement of $S'$, choose a measured lamination, whose support may include the boundary curves. If $q$ denotes a meromorphic quartic differential on $S'$, and $\lambda$ the measured lamination supported on the complement of $S'$, then the mixed structure $\eta$ is
$$ \eta = L_{q} + \lambda.$$
It is entirely possible for $S' = \emptyset$ or $S'=S$, in which case $\eta$ is a measured lamination or a quartic differential metric respectively.\\

Let $\Mix(S) \subset \mathcal{C}(S)$ denote the space of mixed structures. If $\pi: \mathcal{C}(S) \to \mathbb{P} \mathcal{C}(S)$ is the projection map, then $\pi(\Mix(S)) = \mathbb{P} \Mix(S)$.

\begin{thm}\label{thm:limit_flat}
The closure of $\Flat_{4}({S})$ is precisely $\mathbb{P}\Mix(S)$.
\end{thm}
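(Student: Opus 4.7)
The plan is to establish the two inclusions $\overline{\Flat_{4}(S)} \subseteq \mathbb{P}\Mix(S)$ and $\mathbb{P}\Mix(S) \subseteq \overline{\Flat_{4}(S)}$ separately, following the strategy pioneered for holomorphic quadratic differentials in \cite{DLR_flat} and extended to cubic differentials in \cite{OT}.

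For the first inclusion, I would take a sequence $(X_{n}, q_{n}) \in \Flat_{4}^{1}(S)$ with $[L_{q_{n}}] \to [\mu]$ in $\mathbb{P}\mathcal{C}(S)$ and apply a thick-thin decomposition of the flat surface $(S, |q_{n}|^{\frac{1}{2}})$: for $\epsilon > 0$ smaller than a Margulis-type constant, the thin part consists of maximal flat cylinders of small height and the thick part has injectivity radius bounded below by $\epsilon$. After choosing basepoints and framings on each thick component, Theorem \ref{thm:geometric_limit} yields (along a subsequence) geometric convergence of each pointed thick piece to a possibly punctured Riemann surface equipped with a meromorphic quartic differential $q_{\infty}$ having poles of order at most $3$ at the new punctures, this being the maximal order compatible with integrability of $|q_{\infty}|^{\frac{1}{2}}$. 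The currents $L_{q_{n}}$ restricted to the thick parts converge to $L_{q_{\infty}}$. Each thin flat cylinder contributes, after rescaling, a weighted copy of its core curve; these weighted curves jointly produce a measured lamination $\lambda$ supported in the complement of the limiting thick subsurface $S'$, so that $\mu = L_{q_{\infty}} + \lambda$ is a mixed structure. The main technical step is to show that for any simple closed curve $\gamma$, the intersection number $i(L_{q_{n}}, \delta_{\gamma})$ splits asymptotically into a thick-part contribution converging to $i(L_{q_{\infty}}, \delta_{\gamma})$ and a thin-part contribution converging to $i(\lambda, \delta_{\gamma})$, using that the moduli of the thin cylinders diverge.

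For the second inclusion, given $\eta = L_{q} + \lambda$ with $q$ a meromorphic quartic differential (with poles of order at most $3$) on a subsurface $S'$ and $\lambda$ a measured lamination on $S \setminus S'$, I would first approximate $\lambda$ by weighted simple multicurves $\lambda_{k} = \sum_{i} c_{i}^{(k)} \delta_{\gamma_{i}^{(k)}}$. For each $k$, choose a path in Teichm\"uller space pinching the curves $\gamma_{i}^{(k)}$, and use the Poincar\'e-series construction $\theta(X,v,S) = \sum_{i} (p_{i})_{*}\phi(Y_{i},w_{i})$ of the preceding lemma to place flat cylinders of prescribed circumferences $c_{i}^{(k)}$ in the long annuli neighborhoods, while on the complementary subsurface converging to $S'$ one inserts a holomorphic quartic differential approximating $q$ (glued compatibly via a cutoff). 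Taking the geometric limit inside each piece and rescaling, the long annuli contribute $\lambda_{k}$ and the remaining piece contributes $L_{q}$, hence the projective limit is $[L_{q} + \lambda_{k}]$. A diagonal subsequence as $k \to \infty$ yields a sequence in $\Flat_{4}(S)$ projectively converging to $\eta$, where the linear independence of Poincar\'e series established in the previous lemma ensures the approximating differentials are non-degenerate.

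The main obstacle is the closure inclusion: one must carefully justify the geometric convergence of the flat pieces and verify that the meromorphic quartic differentials appearing in the limit indeed have poles of order at most $3$, since higher order poles would violate integrability. This requires a model computation near the degenerating punctures, comparing $(X_{n}, q_{n})$ with the standard model $dz^{4}/z^{4}$ on a half-cylinder, and exploiting uniform area bounds. The second inclusion, while constructive, demands compatible gluing between the flat meromorphic piece on $S'$ and the long cylinders; here one essentially transcribes the constructions of \cite{DLR_flat} and \cite{OT}, with the role of quadratic/cubic Poincar\'e series taken over by the quartic version made available by the preceding lemma.
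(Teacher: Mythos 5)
Your overall two-inclusion strategy matches the paper's, but Part I as you describe it has a genuine gap. You decompose the flat surfaces $(S,|q_{n}|^{\frac12})$ into thick pieces (which converge geometrically) and thin flat cylinders (which contribute weighted core curves), and you assert the laminar part of the limit is built from those cylinder cores. This fails in two ways. First, the laminar part of a boundary point of $\Flat_{4}(S)$ can be an arbitrary measured lamination --- for instance an irrational one arising when the rescaled currents $t_{n}L_{q_{n}}$ converge with $t_{n}\to 0$ --- and such laminations are not produced by thin flat cylinders of the approximating surfaces; indeed a mixed structure with $S'=\emptyset$ must be in the closure by Part II, and your mechanism cannot produce it. Second, geometric convergence of a thick piece controls the unmarked geometry but not the marked length spectrum: if the lengths of fixed homotopy classes blow up (which happens exactly when the normalizing constants $t_{n}$ must tend to $0$), the pointed thick pieces may still converge geometrically while $L_{q_{n}}$ restricted there does not converge as a current. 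The paper's proof avoids both problems by working with the limit current $L_{\infty}$ itself: the dichotomy on $t_{n}$ shows that either $i(L_{\infty},L_{\infty})=0$ (so $L_{\infty}$ is a lamination and one is done), or $t_{n}$ is bounded away from $0$ and $\infty$, in which case one applies the decomposition theorem of \cite{BIPP} to $L_{\infty}$ and invokes geometric convergence only on the components $W$ where $\sys_{W}(L_{\infty})>0$, i.e.\ precisely where the marked lengths are controlled from below. You need this dichotomy and the systole criterion; the thick-thin picture of the surfaces alone does not suffice.

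In Part II, the step ``glued compatibly via a cutoff'' is not legitimate: interpolating between a meromorphic quartic differential on $S'$ and differentials carried by long annuli with a cutoff function destroys holomorphicity, and the resulting metric would not lie in $\Flat_{4}(S)$. The construction has to be a flat surgery: the paper cuts geodesic slits of length $1/n$ at the punctures of $S'$ and of $S''=S\setminus S'$, inserts flat cylinders of circumference $2/n$ and width $w_{i}$ for the weighted boundary curves, and arranges all identifications to be translations composed with rotations by multiples of $\pi/2$, so that the glued flat metric has holonomy in the correct group and cone angles $2\pi+k\frac{\pi}{2}$, hence is induced by a genuine holomorphic quartic differential (by the converse statement from \cite{strata} quoted in Section 2.1); poles of order $3$ require an extra step splitting them into poles of orders $1$ and $2$ before the slit surgery. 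Also note that the Poincar\'e series lemma you invoke is used in the paper to prove the compactness statement (Theorem \ref{thm:geometric_limit}), not to build the approximating differentials in Part II; for the laminar part the paper simply takes the DLR quadratic differentials on $S''$ with area tending to $0$ and tensors them with themselves.
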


\begin{proof}
The proof will follow by showing containment in both directions.  
\subsubsection*{\underline{\textnormal{Part I}}} We begin by showing $\partial \, \overline{\Flat_{4}({S})} \subset \mathbb{P}\Mix(S)$. As the space of projectivized currents is compact, then to any sequence $|q_{n}|^{\frac{1}{2}}$ of unit area flat metrics, (up to a subsequence) there exists a sequence of scalars $t_{n}$ so that $t_{n} L_{q_{n}} \to L_{\infty}$. We observe the following dichotomy: either $t_{n}$ converges to 0, or $t_{n}$ is bounded away from $0$ and infinity. In fact, because every geodesic current has finite self-intersection number, the sequence $t_{n}$ cannot diverge to $+\infty$. In the first case, then 
\begin{align*}
i(L_{\infty}, L_{\infty}) &= \lim_{n \to \infty} i(t_{n} L_{q_{n}}, t_{n} L_{q_{n}}) \\
&= \lim_{n \to \infty} t_{n}^{2} \, i(L_{q_{n}}, L_{q_{n}}) \\
&= 0,
\end{align*}
from which it follows $L_{\infty}$ is a measured lamination. 

If however, $t_{n}$ is bounded away from $0$ and infinity, then without loss of generality, we may take $t_{n}$ to be identically 1. The limit of $L_{q_{n}}$ then will be in the same projective class as $L_{\infty}$. The current decomposition \cite{BIPP} allows us to decompose $L_{\infty}$ as a sum 
$$L_{\infty} = \sum_{W \subset S} \mu_{W} + \sum_{\alpha \in \mathcal{E}} c_{\alpha} \delta_{\alpha}. $$
Here $\mathcal{E}$ is the set
$$\mathcal{E} =\{ \alpha \in C(S) \ | \ i(L_{\infty}, \delta_{\alpha})=0 \ \text{and} \ i(L_{\infty}, \delta_{\beta})> 0 \ \forall \beta \ \text{such that} \ i(\delta_{\alpha}, \delta_{\beta})> 0 \ \},$$ 
where the $W$'s are the connected components of $S \setminus \mathcal{E}$
and $c_{\alpha}$ are nonnegative weights. The first part of the proof is complete provided we show each $\mu_{W}$ is a current coming from a integrable quartic differential, or a measured lamination. 
Recall the \textit{systole} of $L_{\infty}$ on $W$, denoted $\sys_{W}(L_{\infty})$, is the infimum of the set
$$D(W)=\{i(L_{\infty}, \delta_{\gamma}) \ | \ \gamma \in C(W)\}.$$

\noindent There are two cases for $\mu_{W}$:
\begin{enumerate}[i)]
\item if $\sys_{W}(L_{\infty})=0$, then by (\cite[Theorem 1.1]{BIPP}) the restriction of $L_{\infty}$ to $W$ is a measured lamination;
    \item if $\sys_{W}(L_{\infty})>0$, then we have a uniform lower-bound for the $|q_{n}|^{\frac{1}{2}}$-length of any non-peripheral simple closed curve, and hence also of any nonperipheral closed curve in $W$. Since $W$ is a connected component of $S \setminus \mathcal{E}$, the $|q_{n}|^{\frac{1}{2}}$-lengths of the boundary curves in $W$ go to $0$. Therefore, after choosing a basepoint in $W$ (away from the boundary) and passing to a subsequence, we can assume that $q_{n}$ restricted to $W$ converges geometrically to a meromorphic quartic differential on $W$, which must necessarily have poles of order at most $3$ at the punctures since $L_{\infty}$ has finite self-intersection number. This in particular implies convergence of the length spectrum and thus convergence in the space of geodesic currents.
\end{enumerate}
Hence $L_{\infty}$ is a mixed structure, and this concludes the first part of the proof.

\subsubsection*{\underline{\textnormal{Part II}}} Let $\eta=\lambda'+L_{q} \in \Pp\Mix(S)$, where $q$ is a meromorphic quartic differential on an incompressible subsurface $S'\subset S$ and $\lambda'$ is a lamination whose support is disjoint from $S'$. Recall that the simple closed curves $\gamma_{i}$ homotopic to the boundaries of $S'$ may or may not be part of the lamination $\lambda'$, so we will write $\lambda'=\lambda+\sum_{i}w_{i}\delta_{\gamma_{i}}$, for some non-negative weights $w_{i}$. Let $S''=S\setminus S'$. It is clear that we can find a complex structure on $S''$ and a sequence of meromorphic quartic differentials $q_{n}$ such that: (i) the boundary components of $S''$ are conformal to punctures, (ii) the length spectrum of the flat metrics $|q_{n}|^{\frac{1}{2}}$ converges to that of the lamination $\lambda$, and (iii) its area goes to $0$. In fact, there is sequence of holomorphic quadratic differentials with such properties (see \cite{DLR_flat}) and its tensor product gives the desired sequence of quartic differentials.

If $q$ has no poles of order three, we cut a geodesic slit of length $1/n$ starting at each puncture of $S'$ and $S''$. At this step, if the weight $w_{i}\neq 0$, we also insert a flat cylinder of circumference $2/n$ and width $w_{i}$. Note that we can arrange the cuts so that the identifications of the boundaries only involve translations and rotations of $k\frac{\pi}{2}$ for some $k \in \mathbb{Z}$.  Because $q$ has no poles of order three, the flat metrics  we obtain in this way have cone angles $2\pi +k\frac{\pi}{2}$ with $k\geq 0$, and they are induced by holomorphic quartic differentials $q_{n}$. By construction $L_{q_{n}}$ converges to $\eta$.

If $q$ has a pole of order three, a general procedure described in \cite[Section 7]{strata} allows us to break up a triple pole into two arbitrarily close poles of order $1$ and $2$ by modifying the metric $|q|^{\frac{2}{3}}$ only in a neighborhood of the puncture (see also \cite[Section 3]{Rafe_flat}). We can then apply the same surgery described before, where now the slit is given by a geodesic segment connecting the two new-born singularities.
\end{proof}

\section{Limits of induced metrics}
In this section we study how the length spectrum of induced metrics on maximal surfaces in $\h^{2,2}$ degenerate along diverging sequences of  $\SO_{0}(2,3)$-Hitchin representations, thus giving a proof of Theorem \ref{thmB}. 

\subsection{The space of induced metrics} 
We start by collecting some properties of the space of induced metrics $\Ind(S)$ that will be useful in the sequel.

\begin{prop}
There is a bijection between the space $\Ind(S)$ of induced metrics and $\mathcal{Q}^{4}/S^{1}$, where $S^{1}$ acts by multiplication on the fibers.
\end{prop}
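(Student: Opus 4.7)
The plan is to exhibit the $S^{1}$-invariance of the map $(\sigma,q)\mapsto g$ and then to invert the construction by recovering, from the induced metric $g$, first the Riemann surface structure $\sigma$ and then $|q|^{2}$. First I would observe that the system \eqref{eq:cyclic_sp4_global} depends on $q$ only through $\|q\|_{\sigma}^{2}$, which is $S^{1}$-invariant; hence $(\psi_{1},\psi_{2})$, and therefore $g=4e^{\psi_{1}-\psi_{2}}\sigma$, are constant along $S^{1}$-orbits. This gives a well-defined map $\mathcal{Q}^{4}(S)/S^{1}\to\Ind(S)$, and surjectivity is immediate from the very definition of $\Ind(S)$ as the image of $(\sigma,q)\mapsto g$.

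For injectivity, suppose $(\sigma_{1},q_{1})$ and $(\sigma_{2},q_{2})$ produce the same metric $g$. Since $g$ is conformal to each $\sigma_{i}$ and every conformal class contains a unique hyperbolic representative, we obtain $\sigma_{1}=\sigma_{2}=:\sigma$. Setting $u:=\log\bigl(g/(4\sigma)\bigr)$, both pairs of solutions satisfy $\psi_{1}^{(i)}-\psi_{2}^{(i)}=u$. The crucial observation is that substituting $\psi_{2}=\psi_{1}-u$ into the \emph{second} equation of \eqref{eq:cyclic_sp4_global}---which does not involve $\|q\|^{2}$---yields the scalar elliptic equation
$$\Delta_{\sigma}\psi_{1}=e^{2(\psi_{1}-u)}-e^{u}+\Delta_{\sigma}u+\tfrac{1}{4}\kappa(\sigma)$$
for $\psi_{1}$ alone, depending only on $u$. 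The nonlinearity is strictly monotone increasing in $\psi_{1}$, so a standard maximum-principle argument on the compact surface $S$ forces uniqueness: $\psi_{1}^{(1)}=\psi_{1}^{(2)}$, and hence $\psi_{2}^{(1)}=\psi_{2}^{(2)}$. Substituting this common solution back into the first equation of \eqref{eq:cyclic_sp4_global} then recovers $\|q\|_{\sigma}^{2}$ pointwise, so $|q_{1}|=|q_{2}|$.

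To conclude, since $q_{1},q_{2}$ are holomorphic quartic differentials on the closed Riemann surface $X=(S,\sigma)$ with equal moduli, their zero divisors coincide and the ratio $q_{2}/q_{1}$ extends to a holomorphic function on $X$ of constant modulus $1$; by the maximum principle it is a unimodular constant, and $q_{2}=e^{i\theta}q_{1}$. Hence $(\sigma_{1},q_{1})$ and $(\sigma_{2},q_{2})$ lie in the same $S^{1}$-orbit. I expect the uniqueness step for the scalar Liouville-type equation above to be the main technical point, but it is tractable thanks to the monotonicity of the nonlinearity; the real insight of the argument is the decoupling trick that eliminates $|q|$ from the second PDE, allowing one to solve for $\psi_{1}$ without knowing $q$ in advance.
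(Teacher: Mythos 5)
Your proposal is correct and follows essentially the same route as the paper: both exploit the fact that the second equation of the system does not involve $\|q\|_{\sigma}^{2}$, so once $\psi_{1}-\psi_{2}$ is fixed by the metric, a maximum-principle/monotonicity argument forces $\psi_{1}$ and $\psi_{2}$ to agree, after which the first equation recovers $\|q\|_{\sigma}^{2}$ pointwise. The only differences are cosmetic (the paper applies the maximum principle to $\psi_{2}-\widetilde{\psi}_{2}$ rather than to the decoupled equation for $\psi_{1}$), and you usefully spell out the final step that $|q_{1}|=|q_{2}|$ implies $q_{2}=e^{i\theta}q_{1}$, which the paper leaves to the reader.
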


\begin{proof}
As we saw in Section \ref{sec:background}, any particular induced metric may be written as $4e^{\psi_{1}-\psi_{2}}\sigma$, where $2\sigma$ is the Riemannian metric of constant curvature $-1$ in the conformal class of the maximal surface, and $\psi_{1}$ and $\psi_{2}$ satisfy the system
\begin{equation}\label{eq:PDE}
\begin{cases}
\Delta_{\sigma} \psi_{1} =e^{\psi_{1} - \psi_{2}}-e^{-2\psi_{1}} \|q\|^{2}_{\sigma} +\frac{3}{4}\kappa(\sigma) \\ 
\Delta_{\sigma} \psi_{2} =e^{2\psi_{2}}- e^{\psi_{1} - \psi_{2}} + \frac{1}{4}\kappa(\sigma)
\end{cases} \ .
\end{equation}
The proposition amounts to showing that if $\tilde{q}$ is not in the $S^{1}$-orbit of $q$, then  $e^{\psi_{1}-\psi_{2}} \sigma\neq e^{\widetilde{\psi}_{1}-\widetilde{\psi}_{2}}\sigma$. Assume not, then $\psi_{1} - \psi_{2} = \widetilde{\psi}_{1} - \widetilde{\psi}_{2}$.
Consider the equation
$$ \Delta_{\sigma}(\psi_{2} - \widetilde{\psi}_{2}) = e^{\widetilde{\psi}_{1} - \widetilde{\psi}_{2}}-e^{\psi_{1} - \psi_{2}}+e^{2\psi_{2}}-e^{2 \widetilde{\psi}_{2}}=e^{2\psi_{2}}-e^{2 \widetilde{\psi}_{2}}.  $$
At the minimum of $\psi_{2}-\widetilde{\psi}_{2}$, we have
\[
    0\leq \Delta_{\sigma}(\psi_{2}-\widetilde{\psi}_{2})=e^{2\psi_{2}}-e^{2\widetilde{\psi}_{2}}
\]
which implies $\psi_{2}-\widetilde{\psi}_{2}\geq0$ everywhere on $S$. On the other hand, at the maximum of $\psi_{2}-\widetilde{\psi}_{2}$, we have 
$$ 0 \geq \Delta_{\sigma} (\psi_{2} - \widetilde{\psi}_{2}) = e^{2 \psi_{2}} - e^{2 \widetilde{\psi}_{2}}$$
yielding $\psi_{2}-\widetilde{\psi}_{2}\leq 0$ everywhere on $S$. We deduce that $\psi_{2}=\widetilde{\psi}_{2}$ at every point, and, consequently, $\psi_{1}=\widetilde{\psi}_{1}$ as well. Consider, now, the equation
\begin{align*}
0 &= \Delta_{\sigma} (\psi_{1} - \psi_{2}) - \Delta_{\sigma} (\widetilde{\psi}_{1} - \widetilde{\psi}_{2})\\
&= \left(2e^{\psi_{1} - \psi_{2}} - e^{2\psi_{2}} - e^{-2\psi_{1}} \|q\|_{\sigma}^{2} \right) - \left(2e^{\widetilde{\psi}_{1} - \widetilde{\psi}_{2}} - e^{2 \widetilde{\psi}_{2}} - e^{-2 \widetilde{\psi}_{1}} \|\widetilde{q}\|_{\sigma}^{2}\right) \\
&= \left(2e^{\psi_{1} - \psi_{2}} - 2e^{\widetilde{\psi}_{1} - \widetilde{\psi}_{2}} \right) - \left( e^{2\psi_{2}} -  e^{2 \widetilde{\psi}_{2}} \right) - \left(e^{-2\psi_{1}} \|q\|_{\sigma}^{2} -   e^{-2 \widetilde{\psi}_{1}} \|\widetilde{q}\|_{\sigma}^{2}   \right) \\
&=e^{-2\psi_{1}}\left(
 \|q\|_{\sigma}^{2} -\|\widetilde{q}\|_{\sigma}^{2}\right) \ .
\end{align*}
We find that $|q|=|\tilde{q}|$ at every point, which easily implies that $q$ and $\tilde{q}$ lie in the same $S^{1}$-orbit, thus obtaining a contradiction.
\end{proof}

\begin{rmk} We will show in Proposition \ref{prop:par_Ind} that the map 
\begin{align*}
    \mathcal{G}: \mathcal{Q}^{4}/S^{1} &\rightarrow \Ind(S) \\
    (\sigma, [q]) & \mapsto 4e^{\psi_{1}-\psi_{2}}\sigma \ ,
\end{align*}
where $\psi_{1}$ and $\psi_{2}$ are solutions to Hitchin equations (\ref{eq:PDE}), is continuous and proper, thus giving a parametrization of $\Ind(S)$.
\end{rmk}

A key ingredient for our construction is that all metrics in $\Ind(S)$ are negatively curved. This fact has been recently proven in (\cite{LTW}), but we provide here a different proof, communicated to us by Qiongling Li, based only on the analysis of the Hitchin equations.

\begin{prop}\label{prop:neg_curv} Let $\psi_{1}$ and $\psi_{2}$ be the solutions to Equations (\ref{eq:PDE}) with given data $(\sigma, q)\in \mathcal{Q}^{4}(S)$. Then the metric $4e^{\psi_{1}-\psi_{2}}\sigma$ is strictly negatively curved.
\end{prop}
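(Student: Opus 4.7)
The plan is to compute the Gauss curvature $K_g$ of $g=4e^{\psi_1-\psi_2}\sigma$ in closed form via the Hitchin system \eqref{eq:PDE}, rewrite the negativity of $K_g$ as a pointwise inequality for an explicit auxiliary function, and then derive that inequality via a strong maximum-principle argument that ultimately relies on the unavoidable zeros of a nonzero holomorphic quartic differential.

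Applying the standard conformal formula for Gaussian curvature to $g=4e^{u}\sigma$, with $u:=\psi_1-\psi_2$, gives $K_g=-\tfrac{1}{2e^u}(\Delta_\sigma u - \tfrac{1}{2}\kappa(\sigma))$; substituting $\Delta_\sigma u = \Delta_\sigma\psi_1-\Delta_\sigma\psi_2$ from \eqref{eq:PDE} collapses $K_g$ to the clean identity
\[
K_g=-1+\tfrac{1}{2}(X+Y), \qquad X:=e^{-3\psi_1+\psi_2}\|q\|_\sigma^2,\quad Y:=e^{-\psi_1+3\psi_2}.
\]
Both $X$ and $Y$ are smooth on $S$ ($X$ vanishing exactly at the zeros of $q$, and $Y>0$ everywhere), so proving $K_g<0$ is equivalent to showing $F:=X+Y<2$ pointwise on $S$.

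The next step is to derive equations for $\phi:=\log X$ and $\xi:=\log Y$ on the complement of the zeros of $q$. Using \eqref{eq:PDE} together with $\Delta_\sigma\log\|q\|_\sigma^2=-4$ at smooth points yields the Toda-type system
\[
\Delta_\sigma\phi=e^u(3X+Y-4), \qquad \Delta_\sigma\xi=e^u(X+3Y-4).
\]
Since $\Delta_\sigma X=X(\Delta_\sigma\phi+\tfrac{1}{4}|\nabla\phi|_\sigma^2)\ge X\Delta_\sigma\phi$ (and analogously for $Y$), summing and using $XY\le F^2/4$ leads to the key inequality
\[
\Delta_\sigma F \;\ge\; e^u(3X^2+2XY+3Y^2-4F)\;\ge\; e^u(2F^2-4F) \;=\; 2Fe^u(F-2).
\]
Setting $V:=F-2$, this reads $(\Delta_\sigma-2Fe^u)V\ge 0$, an elliptic inequality with non-positive zero-order coefficient. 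Evaluating at an interior maximum of $V$ forces $V\le 0$ (since $F,e^u>0$), giving the non-strict bound $K_g\le 0$ everywhere.

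To upgrade this to strict negativity I would invoke the Hopf strong maximum principle: if $V(p_0)=0$ at some $p_0\in S$, the closed connected surface forces $V\equiv 0$. Tracing the exact identity for $\Delta_\sigma F$, each of the non-negative summands $4e^u(1-XY)$, $\tfrac{1}{4}X|\nabla\phi|_\sigma^2$, $\tfrac{1}{4}Y|\nabla\xi|_\sigma^2$ must vanish, forcing $X\equiv Y\equiv 1$; but $X\equiv 1$ would force $\log\|q\|_\sigma^2=3\psi_1-\psi_2$ to be smooth, hence $q$ to have no zeros, contradicting the fact that a nonzero holomorphic section of $K^4$ on a surface of genus $g\ge 2$ must have $8g-8>0$ zeros (while $q\equiv 0$ gives $X\equiv 0\ne 1$). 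Hence $V<0$ and $K_g<0$ strictly. I expect the main obstacle to be pinpointing the precise pair $(X,Y)$ that both simplifies $K_g$ to $-1+\tfrac{1}{2}(X+Y)$ and whose associated system interacts optimally with AM-GM on $XY$; once this algebraic structure is unearthed, the maximum-principle step is essentially routine, and the zeros of $q$ supply the final rigidity needed to rule out the equality case.
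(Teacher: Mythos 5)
Your argument is structurally the same as the paper's: your $X,Y$ are exactly its $f_{1}=e^{-2\psi_{1}}\|q\|_{\sigma}^{2}/e^{\psi_{1}-\psi_{2}}$ and $f_{2}=e^{2\psi_{2}}/e^{\psi_{1}-\psi_{2}}$, the reduction of $K_{g}<0$ to $X+Y<2$ is identical, and both proofs run a maximum principle on $F=X+Y$ via the Toda-type equations for $\log X$ and $\log Y$. (Your constants are the right ones: $\Delta\log Y=e^{u}(X+3Y-4)$, consistent with the paper's own Lemma 3.8; the ``$-2$'' in the paper's displayed computation of $\Delta_{h}\log(f_{2})$ is a slip.) The one genuine gap is at the zeros of $q$: your inequality $\Delta_{\sigma}F\geq 2Fe^{u}(F-2)$ is derived only on $S\setminus q^{-1}(0)$, since $\phi=\log X$ is undefined there, yet you evaluate at an interior maximum of $V$ and later apply Hopf at a point $p_{0}$ without ruling out that these points lie in $q^{-1}(0)$ — and the maximum of $F$ can perfectly well occur at a zero of $q$. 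The fix is short but must be said: either note that $F$ is smooth on all of $S$, both sides of the inequality are continuous, and $S\setminus q^{-1}(0)$ is dense, so the inequality extends to $S$ by continuity and the (weak and strong) maximum principles apply globally; or argue as the paper does, that if the maximum of $F$ sits at a zero $p$ then $X(p)=0$ forces $p$ to be a maximum of $Y$ alone, whence $0\geq \Delta\log Y(p)=e^{u}(3Y(p)-4)$ gives $F\leq Y(p)\leq 4/3<2$ outright.

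Where you genuinely diverge is the strictness step, and there your version is the more careful one. With the corrected constant, the paper's chain of inequalities only yields $F\leq 2$ at an interior maximum away from the zeros (the strict ``$>$'' in its display comes from the miscomputed $-2$), so some additional rigidity input is really needed. Your upgrade — strong maximum principle forcing $F\equiv 2$, then vanishing of the three nonnegative terms $4e^{u}(1-XY)$, $\tfrac14X|\nabla\phi|^{2}$, $\tfrac14Y|\nabla\xi|^{2}$, hence $X\equiv Y\equiv 1$, hence $q$ nowhere vanishing, contradicting $\deg K^{4}=8g-8>0$ (with $q\equiv 0$ excluded directly by the $Y$-equation) — is correct and supplies exactly what the maximum-principle bound by itself does not. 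Once the behavior at $q^{-1}(0)$ is addressed as above, the proof is complete.
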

\begin{proof} It is sufficient to show that $h=e^{\psi_{1}-\psi_{2}}\sigma$ is negatively curved. The curvature of $h$ is
\[
    \kappa(h)=-2\Delta_{h}\log(h)=2\left(\frac{e^{-2\psi_{1}}\|q\|_{\sigma}^{2}}{e^{\psi_{1}-\psi_{2}}}+\frac{e^{2\psi_{2}}}{e^{\psi_{1}-\psi_{2}}}-2\right),
\]
hence it is strictly negative if and only if $f_{1}+f_{2}<2$, where 
\[
    f_{1}=\frac{e^{-2\psi_{1}}\|q\|_{\sigma}^{2}}{e^{\psi_{1}-\psi_{2}}} \ \ \ \ \text{and} \ \ \ \ f_{2}=\frac{e^{2\psi_{2}}}{e^{\psi_{1}-\psi_{2}}} \ .
\]
Now, outside the zeros of $q$ we have that
\begin{align*}
    \Delta_{h}\log(f_{1})&=-3\Delta_{h}\psi_{1}+\Delta_{h}\psi_{2}-4\Delta_{h}\log(\sigma)=3\frac{e^{-2\psi_{1}}\|q\|_{\sigma}^{2}}{e^{\psi_{1}-\psi_{2}}}+\frac{e^{2\psi_{2}}}{e^{\psi_{1}-\psi_{2}}}-4=-4+3f_{1}+f_{2} \\
    \Delta_{h}\log(f_{2})&=3\Delta_{h}\psi_{2}-\Delta_{h}\psi_{1}=3\frac{e^{2\psi_{2}}}{e^{\psi_{1}-\psi_{2}}}+\frac{e^{-2\psi_{1}}\|q\|_{\sigma}^{2}}{e^{\psi_{1}-\psi_{2}}}-2=3f_{2}+f_{1}-2 \ .
\end{align*}
Then
\begin{align*}
    \Delta_{h}\log(f_{1}+f_{2})&\geq \frac{f_{1}\Delta_{h}\log(f_{1})+f_{2}\Delta_{h}\log(f_{2})}{f_{1}+f_{2}}=\frac{-4f_{1}+3f_{1}^{2}+2f_{1}f_{2}+3f_{2}^{2}+f_{1}f_{2}-2f_{2}}{f_{1}+f_{2}}\\
    &>\frac{-4f_{1}-4f_{2}+3f_{1}^{2}+2f_{1}f_{2}+3f_{2}^{2}+f_{1}f_{2}}{f_{1}+f_{2}}\geq -4+2(f_{1}+f_{2}) \ .
\end{align*}
We deduce that, if $f_{1}+f_{2}$ takes its maximum outside the zeros of $q$, then $f_{1}+f_{2}<2$ and $\kappa_{g}<0$. On the other hand, if $f_{1}+f_{2}$ takes its maximum at a zero $p$ of $q$, then $f_{1}(p)=0$ and $p$ is a maximum of $f_{2}$. But then,
\[
    0\geq \Delta_{h}\log(f_{2})(p)=-2+3f_{2}(p)+f_{1}(p)=-2+3f_{2}(p)
\]
which implies that $f_{2}(p)\leq \frac{2}{3}$ and $f_{1}+f_{2}\leq f_{1}(p)+f_{2}(p)\leq \frac{2}{3}$ everywhere on $S$. Hence $\kappa_{g}<0$ in this case as well.  
\end{proof}

It will also be useful to compare the induced metric $g=4e^{\psi_{1}-\psi_{2}}\sigma$ with the constant curvature metric $\sigma$. To this aim we construct constant sub- and super-solutions for the system of differential equations (\ref{eq:PDE}). We refer the reader to \cite{Amann} and \cite{TW} for an overview of the sub- and super-solution theory for systems of PDEs. 

\begin{lemma}\label{lm:subsolution} The pair $(u_{1},u_{2})=\left(\frac{3}{2}\log\left(-\frac{3}{4}\kappa(\sigma)\right),\frac{1}{2}\log\left(-\frac{3}{4}\kappa(\sigma)\right)\right)$ is a sub-solution for Equation (\ref{eq:PDE}).
\end{lemma}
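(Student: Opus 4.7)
The plan is to verify the sub-solution inequalities directly by substitution, exploiting the fact that the ansatz reduces everything to an algebraic check. Because $2\sigma$ is a hyperbolic metric of constant curvature $-1$, the curvature $\kappa(\sigma)$ is a negative constant, so both components $u_1$ and $u_2$ are constant functions on $S$. Consequently $\Delta_\sigma u_1 = \Delta_\sigma u_2 = 0$, and the sub-solution condition reduces to showing that the right-hand sides of the system \eqref{eq:PDE} are pointwise nonpositive when evaluated at $(u_1, u_2)$.

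Setting $c := -\tfrac{3}{4}\kappa(\sigma) > 0$ for brevity, we have $u_1 = \tfrac{3}{2}\log c$ and $u_2 = \tfrac{1}{2}\log c$, so that $e^{u_1 - u_2} = c$, $e^{2u_2} = c$, and $e^{-2u_1} = c^{-3}$. Plugging into the first equation, the right-hand side becomes
$c - c^{-3}\|q\|^{2}_{\sigma} + \tfrac{3}{4}\kappa(\sigma) = -c^{-3}\|q\|^{2}_{\sigma} \leq 0$,
since the $+c$ term cancels exactly with $\tfrac{3}{4}\kappa(\sigma) = -c$. For the second equation, the right-hand side is $c - c + \tfrac{1}{4}\kappa(\sigma) = \tfrac{1}{4}\kappa(\sigma) < 0$. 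Both right-hand sides are therefore $\leq 0 = \Delta_\sigma u_i$, which is the required sub-solution inequality in the convention compatible with Amann's monotone iteration for cooperative systems, eventually producing solutions $\psi_i \geq u_i$.

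There is essentially no technical obstacle; the proof is a short algebraic verification. The only content lies in the ansatz itself: the exponents $3/2$ and $1/2$ are chosen precisely so that $e^{u_1 - u_2} = e^{2u_2} = -\tfrac{3}{4}\kappa(\sigma)$, which engineers the exact cancellation of the constant Gaussian curvature terms and leaves only the manifestly nonpositive remainders $-c^{-3}\|q\|^{2}_{\sigma}$ and $\tfrac{1}{4}\kappa(\sigma)$.
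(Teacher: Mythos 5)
Your proof is correct and is essentially identical to the paper's: both verify that the constants make $\Delta_\sigma u_i = 0$ and then check algebraically that $F_i(u_1,u_2)\le 0$, with the exponents chosen so that $e^{u_1-u_2}=e^{2u_2}=-\tfrac{3}{4}\kappa(\sigma)$ cancels the curvature term. The only cosmetic difference is that you compute the first right-hand side exactly as $-c^{-3}\|q\|_\sigma^2$ whereas the paper simply drops that nonpositive term before concluding.
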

\begin{proof} Let $F_{1}(\psi_{1},\psi_{2})$ and $F_{2}(\psi_{1},\psi_{2})$ denote the right-hand side of Equation (\ref{eq:PDE}). We need to check that $F_{i}(u_{1},u_{2}) \leq 0$ for $i=1,2$. This follows from straightforward computations:
\[
    F_{1}(u_{1},u_{2})=e^{u_{1}-u_{2}}-e^{-2u_{1}}\|q\|_{\sigma}^{2}+\frac{3}{4}\kappa(\sigma) \leq e^{u_{1}-u_{2}}+\frac{3}{4}\kappa(\sigma)=0
\]
and
\[
    F_{2}(u_{1},u_{2})=e^{2u_{2}}-e^{u_{1}-u_{2}}+\frac{1}{4}\kappa(\sigma)=\frac{1}{4}\kappa(\sigma)\leq 0 \ .
\]
\end{proof}

\begin{lemma}\label{lm:supersolution} There are constants $c_{1},c_{2}>0$ such that the pair $(c_{1},c_{2})$ is a super-solution of Equation (\ref{eq:PDE}).
\end{lemma}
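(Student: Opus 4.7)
The plan is to look for the super-solution in the simplest possible form, namely as a pair of constant functions $(c_1,c_2)$. Since $\Delta_\sigma c_i=0$, the condition for being a super-solution reduces to
\[
F_1(c_1,c_2)=e^{c_1-c_2}-e^{-2c_1}\|q\|_\sigma^2+\tfrac{3}{4}\kappa(\sigma)\geq 0,\qquad F_2(c_1,c_2)=e^{2c_2}-e^{c_1-c_2}+\tfrac{1}{4}\kappa(\sigma)\geq 0,
\]
in accordance with the sign convention fixed by Lemma \ref{lm:subsolution}, where the sub-solution was identified by the opposite inequalities $F_i(u_1,u_2)\leq 0$. Compactness of $S$ ensures $M:=\max_S\|q\|_\sigma^2<\infty$, and $\kappa_0:=-\kappa(\sigma)$ is a fixed positive constant.

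The main idea is to decouple these two inequalities by prescribing the gap $a:=c_1-c_2>0$ in advance and then letting $c_2$ grow. With $c_1=c_2+a$, the two requirements take the form
\[
e^{a}\geq e^{-2(c_2+a)}M+\tfrac{3}{4}\kappa_0\quad\text{and}\quad e^{2c_2}\geq e^{a}+\tfrac{1}{4}\kappa_0.
\]
First I would fix $a$ large enough that $e^{a}>\tfrac{3}{4}\kappa_0$ (for instance $a=\log(\kappa_0+1)$). With $a$ then held constant, as $c_2\to+\infty$ the summand $e^{-2(c_2+a)}M$ shrinks to $0$, so the first inequality becomes sharp in the limit and holds for all sufficiently large $c_2$; simultaneously $e^{2c_2}\to+\infty$ secures the second inequality. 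Any $c_2$ beyond a threshold thus furnishes a valid super-solution $(c_1,c_2)=(c_2+a,c_2)$.

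There is no real obstacle here: the lemma is a genuinely routine construction, once one notices that a naive choice such as $c_1=c_2$ cannot work because the term $\tfrac{3}{4}\kappa(\sigma)$ in $F_1$ is negative and must be compensated by the exponential $e^{c_1-c_2}$, while the corresponding exponential in $F_2$ must in turn be dominated by $e^{2c_2}$. Introducing the auxiliary parameter $a=c_1-c_2$ precisely separates these two opposing tendencies and makes the existence of constants trivial. One can further increase $c_2$, if needed in the sequel, to arrange that $(c_1,c_2)$ dominates the sub-solution of Lemma \ref{lm:subsolution} pointwise, since the $u_i$ there are themselves fixed constants.
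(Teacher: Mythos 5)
Your proposal is correct and takes essentially the same route as the paper: both posit a constant pair, reduce to the two algebraic inequalities $F_1(c_1,c_2)\geq 0$ and $F_2(c_1,c_2)\geq 0$, and satisfy them by sending $c_2\to+\infty$ along an affine relation between $c_1$ and $c_2$. The only (harmless) difference is the choice of relation: the paper sets $c_1=3c_2+\log\left(1+\frac{1}{4}\kappa(\sigma)\right)$ so that the second inequality is handled exactly, whereas you fix the gap $c_1-c_2=a$ with $e^a>-\frac{3}{4}\kappa(\sigma)$ and let $c_2$ grow; your constants still depend only on $\sigma$ and $\max_S\|q\|_\sigma^2$, which is the property the paper needs later.
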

\begin{proof} Using the same notation of the previous lemma, we have to find constants $c_{1},c_{2}>0$ such that
\begin{equation}\label{eq:ineq}
    \begin{cases}
        e^{c_{1}-c_{2}}-e^{-2c_{1}}\|q\|_{\sigma}^{2}+\frac{3}{4}\kappa(\sigma)\geq 0 \\
        e^{2c_{2}}-e^{c_{1}-c_{2}}+\frac{1}{4}\kappa(\sigma) \geq 0
    \end{cases}
\end{equation}
Dividing the second equation by $e^{2c_{2}}$, we get
\begin{equation}
    1-e^{c_{1}-3c_{2}}+\frac{1}{4}\kappa(\sigma)e^{-2c_{2}} \geq 1-e^{c_{1}-3c_{2}}+\frac{1}{4}\kappa(\sigma)
\end{equation}
where in the last inequality we used that $c_{2}>0$ and $\kappa(\sigma)<0$. It is easy to check that if $c_{1}=3c_{2}+\log\left(1+\frac{1}{4}\kappa(\sigma)\right)$ then 
\[
    1-e^{c_{1}-3c_{2}}+\frac{1}{4}\kappa(\sigma)=0,
\]
hence the second inequality in (\ref{eq:ineq}) holds. We determine $c_{2}$ by looking now at the first equation in (\ref{eq:ineq}):
\begin{align*}
   &e^{2c_{2}}\left(1+\frac{1}{4}\kappa(\sigma)\right)-e^{-6c_{2}}\left(1+\frac{1}{4}\kappa(\sigma)\right)^{-2}\|q\|_{\sigma}^{2}+\frac{3}{4}\kappa(\sigma) \\
   &\geq  e^{2c_{2}}\left(1+\frac{1}{4}\kappa(\sigma)\right)-e^{-6c_{2}}\left(1+\frac{1}{4}\kappa(\sigma)\right)^{-2}\max_{S}(\|q\|_{\sigma}^{2})+\frac{3}{4}\kappa(\sigma)
\end{align*}
and this last expression becomes positive as long as $c_{2}$ is sufficiently large. Note that we can 
arrange $c_{2}$ to only depend on $\sigma$ and $\max_{S}(\|q\|_{\sigma}^{2})$. 
\end{proof}

Moreover, the induced metric $g=4e^{\psi_{1}-\psi_{2}}\sigma$ dominates a metric of constant curvature in its conformal class:
\begin{lemma}\label{lm:domination_constant} Given $(\sigma,q)\in \mathcal{Q}^{4}(S)$, let $g=4e^{\psi_{1}-\psi_{2}}\sigma$ be the induced metric on the maximal surface. Then $-3\kappa(\sigma)\sigma \leq g$. 
\end{lemma}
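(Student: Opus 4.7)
The claim $g \geq -3\kappa(\sigma)\sigma$ amounts to the pointwise scalar inequality $e^{\psi_{1}-\psi_{2}} \geq -\tfrac{3}{4}\kappa(\sigma)$ on $S$, since $2\sigma$ is hyperbolic so $\kappa(\sigma)$ is a negative constant. I would observe first that $-\tfrac{3}{4}\kappa(\sigma) = e^{u_1-u_2}$, where $(u_1,u_2)$ is the constant sub-solution from Lemma \ref{lm:subsolution}, so the task is really to show that $\psi_1 - \psi_2$ lies above the difference of the sub-solution. The plan is to prove this by a bootstrap with the maximum principle applied to the coupled system (\ref{eq:PDE}) in two steps: first pin down a uniform lower bound for $e^{2\psi_2}$, then feed it into a maximum principle argument for $\psi_1 - \psi_2$.

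Concretely, I first subtract the two equations in (\ref{eq:PDE}) to get
\[
\Delta_\sigma(\psi_1-\psi_2) \;=\; 2e^{\psi_1-\psi_2}-e^{-2\psi_1}\|q\|_\sigma^{2}-e^{2\psi_2}+\tfrac{1}{2}\kappa(\sigma),
\]
and set $a = \min_S e^{\psi_1-\psi_2}$. To bound $e^{2\psi_2}$, I would evaluate the second equation of (\ref{eq:PDE}) at a minimum $x_0$ of $\psi_2$; there $\Delta_\sigma \psi_2(x_0)\geq 0$ and $e^{\psi_1-\psi_2}(x_0)\geq a$, yielding
\[
e^{2\psi_2(x_0)} \;\geq\; e^{\psi_1-\psi_2}(x_0)-\tfrac{1}{4}\kappa(\sigma) \;\geq\; a-\tfrac{1}{4}\kappa(\sigma),
\]
so that $e^{2\psi_2} \geq a - \tfrac{1}{4}\kappa(\sigma)$ holds globally on $S$.

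Next I would apply the maximum principle to $\psi_1-\psi_2$ itself: at a minimum $y_0$ of $\psi_1-\psi_2$, one has $\Delta_\sigma(\psi_1-\psi_2)(y_0)\geq 0$. Using $e^{-2\psi_1}\|q\|_\sigma^2\geq 0$ together with the bound just obtained for $e^{2\psi_2}(y_0)$, this gives
\[
2a \;\geq\; 0 + \bigl(a-\tfrac{1}{4}\kappa(\sigma)\bigr) - \tfrac{1}{2}\kappa(\sigma) \;=\; a - \tfrac{3}{4}\kappa(\sigma),
\]
from which $a\geq -\tfrac{3}{4}\kappa(\sigma)$ follows. Multiplying by $4\sigma$ then yields $g \geq -3\kappa(\sigma)\sigma$ as desired.

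The delicate point is that a single application of the maximum principle to $\psi_1-\psi_2$ in isolation only yields $e^{\psi_1-\psi_2}\geq -\tfrac{1}{4}\kappa(\sigma)$, which is weaker than the claim by a factor of three; the coupling $-e^{2\psi_2}$ on the right-hand side is what forces the coefficient to improve, so one really has to insert an a priori lower bound for $e^{2\psi_2}$, itself extracted from the equation for $\psi_2$. This is where the system structure and the sub-solution $(u_1,u_2)$ of Lemma \ref{lm:subsolution} enter substantively; the two-step bootstrap is just the scalar manifestation of comparing with that sub-solution.
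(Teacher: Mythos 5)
Your argument is correct and is essentially the paper's own proof: the paper likewise applies the maximum principle at a minimum of $\psi_1-\psi_2$ and at a minimum of $2\psi_2$ and combines the two resulting inequalities, arriving at the same constant $-\tfrac{3}{4}\kappa(\sigma)$. Your two-step bootstrap is just a slight reorganization of that same algebra, so nothing further is needed.
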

\begin{proof} Let $f=\psi_{1}-\psi_{2}$ and $g=2\psi_{2}$, then
\begin{align*}
    \Delta_{\sigma}f&=\Delta_{\sigma}\psi_{1}-\Delta_{\sigma}\psi_{2}=2e^{\psi_{1}-\psi_{2}}-e^{-2\psi_{1}}\|q\|^{2}_{\sigma}-e^{2\psi_{2}}+\frac{1}{2}\kappa(\sigma) \leq 2e^{f}-e^{g}+\frac{1}{2}\kappa(\sigma) \\
    \Delta_{\sigma}g&=2\Delta_{\sigma}\psi_{2}=2e^{2\psi_{2}}-2e^{\psi_{1}-\psi_{2}}+\frac{1}{2}\kappa(\sigma) = 2e^{g}-2e^{f}+\frac{1}{2}\kappa(\sigma)
\end{align*}
Let $x$ be a point of minimum for $f$ and $y$ be a point of minimum for $g$. From the above equations, we find that
\begin{align*}
    2e^{f} &\geq 2e^{f(x)}\geq e^{g(x)}-\frac{1}{2}\kappa(\sigma) \\
    2e^{g}&\geq 2e^{g(y)}\geq 2e^{f(y)}-\frac{1}{2}\kappa(\sigma) \ .
\end{align*}
Now, 
\[
    e^{g(x)} \geq e^{f(y)}-\frac{1}{4}\kappa(\sigma)\geq \frac{1}{2}e^{g(x)}-\frac{1}{2}\kappa(\sigma)
\]
thus $e^{g(x)}\geq -\kappa(\sigma)$ and 
\[
    g=4e^{\psi_{1}-\psi_{2}}\sigma=4e^{f}\sigma \geq (2e^{g(x)}-\kappa(\sigma))\sigma \geq -3\kappa(\sigma)\sigma
\]
as claimed. 
\end{proof}

\subsection{Comparison with the quartic differential metric} The main step towards the proof of Theorem \ref{thmB} is a comparison between the induced metric $\mathcal{G}(\sigma, [q])$ and the flat metric $|q|^{\frac{1}{2}}$. \\

Let us start with the construction of sub-solutions for the system (\ref{eq:PDE}) in terms of the flat metric $|q|^{\frac{1}{2}}$.

\begin{prop}\label{prop:sub_flat} Let $\psi_{1}$ and $\psi_{2}$ be the solution to Equation (\ref{eq:PDE}). Then $\psi_{1}\geq \log(\|q\|_{\sigma}^{\frac{3}{4}})$ and $\psi_{2}\geq \log(\|q\|_{\sigma}^{\frac{1}{4}})$. 
\end{prop}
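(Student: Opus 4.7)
The plan is a two-sided maximum principle argument applied to the differences $v_i := \psi_i - u_i$, where $u_1 := \log(\|q\|_\sigma^{3/4})$ and $u_2 := \log(\|q\|_\sigma^{1/4})$. The first step is to observe that on $S \setminus q^{-1}(0)$ the pair $(u_1, u_2)$ is an exact solution of the system \eqref{eq:PDE}: the exponential terms collapse symmetrically since
\[
e^{u_1-u_2} \;=\; \|q\|_\sigma^{1/2} \;=\; e^{-2u_1}\|q\|_\sigma^{2} \;=\; e^{2u_2},
\]
and from $\kappa(\sigma) = -2\Delta_\sigma \log\sigma$ together with the harmonicity of $\log|q|$ away from its zeros one computes $\Delta_\sigma u_1 = \tfrac{3}{4}\kappa(\sigma)$ and $\Delta_\sigma u_2 = \tfrac{1}{4}\kappa(\sigma)$, which matches the right-hand sides of \eqref{eq:PDE} evaluated at $(u_1, u_2)$.

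Subtracting then yields the reduced system on $S \setminus q^{-1}(0)$:
\[
\Delta_\sigma v_1 \;=\; \|q\|_\sigma^{1/2}\bigl(e^{v_1-v_2}-e^{-2v_1}\bigr), \qquad \Delta_\sigma v_2 \;=\; \|q\|_\sigma^{1/2}\bigl(e^{2v_2}-e^{v_1-v_2}\bigr).
\]
Assuming $q \not\equiv 0$ (otherwise the inequalities are vacuous), each $u_i$ diverges to $-\infty$ at the zeros of $q$ while $\psi_i$ stays bounded, so $v_i$ extends to a lower semicontinuous function on $S$ with value $+\infty$ on $q^{-1}(0)$. Its minimum $m_i := \min_S v_i$ is therefore finite and attained at some interior point $p_i \in S \setminus q^{-1}(0)$, where $\Delta_\sigma v_i(p_i) \geq 0$.

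Since $\|q\|_\sigma(p_1) > 0$, the first reduced equation at $p_1$ forces $e^{v_1(p_1)-v_2(p_1)} \geq e^{-2v_1(p_1)}$, i.e.\ $v_2(p_1) \leq 3\, v_1(p_1)$, hence $m_2 \leq 3 m_1$; the symmetric argument at $p_2$ yields $m_1 \leq 3 m_2$. Chaining gives $m_1 \leq 9 m_1$, so $m_1 \geq 0$, and $m_2 \geq 0$ by symmetry — exactly the claimed bounds. The main subtlety is guaranteeing that the minima of the $v_i$ lie in $S \setminus q^{-1}(0)$: this is where the controlled logarithmic blow-up of $u_i$ at the zeros of $q$ is essential, and it is precisely what allows the scalar maximum principle to be applied to each equation separately, sidestepping the fact that the system \eqref{eq:PDE} is not cooperative (both off-diagonal derivatives $\partial F_i / \partial \psi_j$ with $i \neq j$ are strictly negative).
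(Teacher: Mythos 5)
Your argument is correct, and it takes a genuinely different route from the paper. The paper's proof constructs an explicit global sub-solution pair by truncating $\left(\log\|q\|_{\sigma}^{3/4},\log\|q\|_{\sigma}^{1/4}\right)$ from below with the constant sub-solution of Lemma \ref{lm:subsolution} (so that the candidate is finite at the zeros of $q$), observes that the maximum of two sub-solutions is a sub-solution, and then invokes the sub-/super-solution comparison theory for systems to conclude $\psi_{i}\geq v_{i}$. You instead notice that $(u_{1},u_{2})$ is an \emph{exact} solution of \eqref{eq:PDE} off $q^{-1}(0)$, pass to the differences $v_{i}=\psi_{i}-u_{i}$, use the logarithmic blow-up of $v_{i}$ at the zeros to locate their minima in $S\setminus q^{-1}(0)$, and then decouple the system by chaining the two scalar inequalities $m_{2}\leq 3m_{1}$ and $m_{1}\leq 3m_{2}$ to get $m_{1},m_{2}\geq 0$. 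This is more elementary and self-contained: it avoids the monotone-iteration machinery entirely and also dispenses with the truncation device. One correction, though: your closing parenthetical is backwards. Writing the system as $-\Delta_{\sigma}\psi_{i}=-F_{i}(\psi)$, the off-diagonal derivatives $\partial(-F_{i})/\partial\psi_{j}=e^{\psi_{1}-\psi_{2}}>0$ for $i\neq j$, so the system \emph{is} cooperative (quasimonotone nondecreasing) in the sense relevant to comparison principles --- this is precisely what makes the paper's appeal to the sub-/super-solution theory of Amann legitimate. Your proof does not need this fact, but you should not cite non-cooperativity as the obstruction you are circumventing.
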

\begin{proof} We define the following functions
\[
    v_{1}=\begin{cases}
            \frac{3}{2}\log(\|q\|_{\sigma}^{\frac{1}{2}})  \ \ \ \ \ \ \ \ \ \ \ \ \ \ \ \ \ \ \ \ \ \text{if} \ \ \|q\|_{\sigma}^{\frac{1}{2}} \geq -\frac{3}{4}\kappa(\sigma) \\
            \frac{3}{2}\log\left(-\frac{3}{4}\kappa(\sigma)\right) \ \ \ \ \ \ \ \ \ \ \ \ \ \ \ \ \text{otherwise}
          \end{cases}
\]
and 
\[
    v_{2}=\begin{cases}
            \frac{1}{2}\log(\|q\|_{\sigma}^{\frac{1}{2}})  \ \ \ \ \ \ \ \ \ \ \ \ \ \ \ \ \ \ \ \ \ \text{if} \ \ \|q\|_{\sigma}^{\frac{1}{2}} \geq -\frac{3}{4}\kappa(\sigma) \\
            \frac{1}{2}\log\left(-\frac{3}{4}\kappa(\sigma)\right) \ \ \ \ \ \ \ \ \ \ \ \ \ \ \ \ \text{otherwise}
          \end{cases}
\]
The proposition follows if we show that the pair $(v_{1},v_{2})$ is a sub-solution for the system (\ref{eq:PDE}). As in Lemma \ref{lm:subsolution}, we denote by $F_{i}(\psi_{1}, \psi_{2})$ the right-hand side of Equation (\ref{eq:PDE}). Because $v_{1}$ and $v_{2}$ are defined as the maximum of two functions and the maximum of two sub-solutions is a sub-solution, it is sufficient to show that 
\[
    \begin{cases}
    \Delta_{\sigma}\log(\|q\|_{\sigma}^{\frac{3}{4}}) \geq F_{1}(\log(\|q\|_{\sigma}^{\frac{3}{4}}),\log(\|q\|_{\sigma}^{\frac{1}{4}})) \\
    \Delta_{\sigma}\log(\|q\|_{\sigma}^{\frac{1}{4}}) \geq F_{2}(\log(\|q\|_{\sigma}^{\frac{3}{4}}),\log(\|q\|_{\sigma}^{\frac{1}{4}}))
    \end{cases} \ 
\]
outside the zeros of $q$ and
\[
    \begin{cases}
    F_{1}\left(\frac{3}{2}\log\left(-\frac{3}{4}\kappa(\sigma)\right), \frac{1}{2}\log\left(-\frac{3}{4}\kappa(\sigma)\right)\right) \leq 0 \\
    F_{2}\left(\frac{3}{2}\log\left(-\frac{3}{4}\kappa(\sigma)\right), \frac{1}{2}\log\left(-\frac{3}{4}\kappa(\sigma)\right)\right) \leq 0
    \end{cases} \ .
\]
The first computation is straightforward and left to the reader. The second was carried out in Lemma \ref{lm:subsolution}.
\end{proof}

This result can be improved to a domination result between the induced metric $g$ and the flat metric $4|q|^{\frac{1}{2}}$, which already appeared in
\cite{QL_cyclic}. We provide however a proof for the convenience of the reader.

\begin{prop}\label{prop:sub_ineq} Given $(\sigma, q) \in \mathcal{Q}^{4}(S)$, let $g=4e^{\psi_{1}-\psi_{2}}\sigma$ be the induced metric on the maximal surface. Then, $4|q|^{\frac{1}{2}} \leq g$. 
\end{prop}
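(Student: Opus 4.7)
The plan is to reduce the pointwise comparison to a maximum principle argument on the auxiliary function
\[
u \;=\; \psi_{1}-\psi_{2} \;-\; \tfrac{1}{2}\log\|q\|_{\sigma},
\]
defined on the complement of the zero set $Z(q)$. Since $\psi_{1},\psi_{2}$ are smooth (hence bounded) on the closed surface $S$ and $-\tfrac{1}{2}\log\|q\|_{\sigma}\to +\infty$ on $Z(q)$, the function $u$ blows up along $Z(q)$ and thus attains its infimum at some interior point $p\in S\setminus Z(q)$. Proving $u(p)\geq 0$ is equivalent (after multiplying both sides by $\sigma$) to $\|q\|_{\sigma}^{1/2}\sigma\leq e^{\psi_{1}-\psi_{2}}\sigma$, i.e.\ to the desired inequality $4|q|^{1/2}\leq g$.

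The key step is a cancellation of curvature terms in $\Delta_{\sigma} u$. Subtracting the two equations in (\ref{eq:PDE}) yields
\[
\Delta_{\sigma}(\psi_{1}-\psi_{2}) \;=\; 2e^{\psi_{1}-\psi_{2}} - e^{2\psi_{2}} - e^{-2\psi_{1}}\|q\|_{\sigma}^{2} + \tfrac{1}{2}\kappa(\sigma).
\]
Outside $Z(q)$, holomorphicity of $q$ gives $\Delta\log|q|=0$, while the identity $\Delta_{\sigma}\log\sigma=-\tfrac{1}{2}\kappa(\sigma)$ (implicit in the passage from (\ref{eq:cyclic_sp4}) to (\ref{eq:cyclic_sp4_global})) yields $\Delta_{\sigma}\log\|q\|_{\sigma} = -2\Delta_{\sigma}\log\sigma = \kappa(\sigma)$ on $S\setminus Z(q)$. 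Subtracting one half of this identity from the previous display, the $\kappa(\sigma)$ contributions cancel exactly, leaving
\[
\Delta_{\sigma} u \;=\; 2e^{\psi_{1}-\psi_{2}} - e^{2\psi_{2}} - e^{-2\psi_{1}}\|q\|_{\sigma}^{2} \qquad\text{on }S\setminus Z(q).
\]

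At the minimum point $p$, nonnegativity of $\Delta_{\sigma} u(p)$ forces
\[
2e^{(\psi_{1}-\psi_{2})(p)} \;\geq\; e^{2\psi_{2}(p)} + e^{-2\psi_{1}(p)}\|q\|_{\sigma}(p)^{2}.
\]
The arithmetic-geometric mean inequality applied to the two positive summands on the right, whose product equals $e^{2(\psi_{2}-\psi_{1})(p)}\|q\|_{\sigma}(p)^{2}$, bounds the right-hand side below by $2e^{(\psi_{2}-\psi_{1})(p)}\|q\|_{\sigma}(p)$. Combining these two inequalities gives $e^{2(\psi_{1}-\psi_{2})(p)}\geq \|q\|_{\sigma}(p)$, i.e.\ $u(p)\geq 0$, and by the choice of $p$ this propagates to $u\geq 0$ on $S\setminus Z(q)$; the inequality extends trivially across $Z(q)$. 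I expect the main subtlety to be the bookkeeping of the normalization constants ensuring that the $\kappa(\sigma)$ contributions cancel on the nose; once that is in place, the argument reduces to a clean application of the maximum principle combined with AM-GM on the right-hand side of the PDE governing $u$.
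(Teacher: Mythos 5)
Your proof is correct and is essentially the paper's argument in a different parametrization: the paper applies the maximum principle to $e^{4\psi_{2}-4\psi_{1}}\|q\|_{\sigma}^{2}=e^{-4u}$ at its maximum, while you work with $u$ at its minimum, and both rely on the same AM--GM estimate $e^{2\psi_{2}}+e^{-2\psi_{1}}\|q\|_{\sigma}^{2}\geq 2e^{\psi_{2}-\psi_{1}}\|q\|_{\sigma}$. The curvature cancellation you verify is exactly the identity $\Delta_{\sigma}\log\sigma=-\tfrac{1}{2}\kappa(\sigma)$ that the paper absorbs into the term $-4\Delta_{\sigma}\log(\sigma)$ in its computation, so no new idea is needed.
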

\begin{proof} We consider the function $u=e^{4\psi_{2}-4\psi_{1}}\|q\|_{\sigma}^{2}$, which is well-defined everywhere on $S$. It is clear that $u\geq 0$ and takes a maximum outside the zeros of $q$. Now, outside the zeros of $q$, the function $u$ satisfies
\begin{align*}
    \Delta_{\sigma} \log(u) &= 4\Delta_{\sigma} \psi_{2}-4\Delta_{\sigma} \psi_{1}-4\Delta_{\sigma}\log(\sigma) \\
    &= 4e^{2\psi_{2}}-8e^{\psi_{1}-\psi_{2}}+4e^{-2\psi_{1}}\|q\|_{\sigma}^{2} \\
    &\geq 8e^{\psi_{2}-\psi_{1}}\|q\|_{\sigma}-8e^{\psi_{1}-\psi_{2}} \\
    &= 8e^{\psi_{1}-\psi_{2}}(e^{2\psi_{2}-2\psi_{1}}\|q\|_{\sigma}-1)\\
    &= 8e^{\psi_{1}-\psi_{2}}(\sqrt{u}-1) .
\end{align*}
Hence, at a maximum of $u$, we have
\[
    0\geq \Delta_{g}\log(u) \geq 2(\sqrt{u}-1) \ .
\]
We deduce that $u\leq 1$ which implies that
\[
    4|q|^{\frac{1}{2}}=4\|q\|_{\sigma}^{\frac{1}{2}}\sigma \leq 4e^{\psi_{1}-\psi_{2}} \sigma=g \ .
\]
\end{proof}

We will also need the following uniform bound:

\begin{lemma}[\cite{QL_cyclic}]\label{lm:bound} Let $\psi_{1}$ and $\psi_{2}$ be the solution to Hitchin equations (\ref{eq:PDE}). Then
\[
    0\leq 3\psi_{2}-\psi_{1} \leq \log\left(\frac{4}{3}\right) \ .
\]
\end{lemma}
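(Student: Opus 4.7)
The plan is to apply the maximum principle to the auxiliary function $u = 3\psi_{2} - \psi_{1}$ directly, exploiting the already-established pointwise lower bound on $\psi_{1}, \psi_{2}$ coming from Proposition \ref{prop:sub_flat}. The first step is a direct computation: combining the two equations of the system (\ref{eq:PDE}), one finds
\[
    \Delta_{\sigma} u \;=\; 3\Delta_{\sigma}\psi_{2} - \Delta_{\sigma}\psi_{1} \;=\; 3e^{2\psi_{2}} - 4e^{\psi_{1}-\psi_{2}} + e^{-2\psi_{1}}\|q\|_{\sigma}^{2},
\]
where the curvature terms $\tfrac{3}{4}\kappa(\sigma)$ from the first equation and $\tfrac{3}{4}\kappa(\sigma)$ from three times the second cancel. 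Noting that $e^{\psi_{1}-\psi_{2}} = e^{2\psi_{2}} \cdot e^{-u}$, this rewrites cleanly as
\[
    \Delta_{\sigma} u \;=\; e^{2\psi_{2}}\bigl(3 - 4 e^{-u}\bigr) + e^{-2\psi_{1}}\|q\|_{\sigma}^{2}.
\]

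For the upper bound, I would evaluate at a maximum $p_{\max}$ of $u$. Since $e^{-2\psi_{1}}\|q\|_{\sigma}^{2} \geq 0$ everywhere and $\Delta_{\sigma} u(p_{\max}) \leq 0$, one immediately obtains $e^{2\psi_{2}(p_{\max})}(3 - 4 e^{-u(p_{\max})}) \leq 0$, that is $e^{-u(p_{\max})} \geq 3/4$, which gives $u \leq \log(4/3)$ everywhere. This is the easy half and needs no further input.

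For the lower bound, the key observation is that Proposition \ref{prop:sub_flat} forces a pointwise control of the quartic-differential term by the $e^{2\psi_{2}}$ term. Indeed $\psi_{1} \geq \tfrac{3}{4}\log\|q\|_{\sigma}$ gives $e^{-2\psi_{1}}\|q\|_{\sigma}^{2} \leq \|q\|_{\sigma}^{1/2}$, while $\psi_{2} \geq \tfrac{1}{4}\log\|q\|_{\sigma}$ gives $e^{2\psi_{2}} \geq \|q\|_{\sigma}^{1/2}$, so
\[
    e^{-2\psi_{1}}\|q\|_{\sigma}^{2} \;\leq\; e^{2\psi_{2}}
\]
globally on $S$ (trivially at zeros of $q$). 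Plugging this into the formula for $\Delta_{\sigma} u$ yields
\[
    \Delta_{\sigma} u \;\leq\; e^{2\psi_{2}}\bigl(4 - 4 e^{-u}\bigr).
\]
Now evaluate at a minimum $p_{\min}$ of $u$: one has $\Delta_{\sigma} u(p_{\min}) \geq 0$, hence $4 - 4 e^{-u(p_{\min})} \geq 0$, giving $u \geq 0$ everywhere.

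The main subtlety, and the reason one cannot conclude the lower bound purely from the computation of $\Delta_{\sigma} u$ alone, is that at a minimum of $u$ the sign of the bracket $(3 - 4 e^{-u})$ is indeterminate and the remaining term $e^{-2\psi_{1}}\|q\|_{\sigma}^{2}$ is nonnegative, working in the wrong direction. Thus the essential input is the comparison $e^{-2\psi_{1}}\|q\|_{\sigma}^{2} \leq e^{2\psi_{2}}$ from Proposition \ref{prop:sub_flat}, which effectively converts the problematic term into an extra $+e^{2\psi_{2}}$ and shifts the threshold $3$ to $4$. No regularity issue arises since $u$ is smooth and $S$ is compact.
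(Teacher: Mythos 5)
Your proof is correct. For the upper bound your argument coincides with the paper's: both compute $\Delta_{\sigma}(3\psi_{2}-\psi_{1}) = 3e^{2\psi_{2}}-4e^{\psi_{1}-\psi_{2}}+e^{-2\psi_{1}}\|q\|_{\sigma}^{2}$, drop the nonnegative quartic term, and apply the maximum principle at a maximum to get $e^{3\psi_{2}-\psi_{1}}\leq 4/3$. Where you genuinely diverge is the lower bound: the paper's displayed proof stops after the upper bound (its one-sided inequality $\Delta_{\sigma}f\geq e^{\psi_{1}-\psi_{2}}(3e^{f}-4)$ gives nothing at a minimum, and the lemma's attribution to \cite{QL_cyclic} is effectively doing the work for $f\geq 0$), whereas you supply a self-contained argument. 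Your key step --- combining $\psi_{1}\geq\frac{3}{4}\log\|q\|_{\sigma}$ and $\psi_{2}\geq\frac{1}{4}\log\|q\|_{\sigma}$ from Proposition \ref{prop:sub_flat} into the pointwise comparison $e^{-2\psi_{1}}\|q\|_{\sigma}^{2}\leq\|q\|_{\sigma}^{1/2}\leq e^{2\psi_{2}}$, which converts the troublesome term into an extra $+e^{2\psi_{2}}$ and shifts the threshold from $3$ to $4$ --- is valid, and there is no circularity since Proposition \ref{prop:sub_flat} is established earlier by sub/super-solution arguments that do not invoke this lemma. You also correctly diagnose why the lower bound cannot be extracted from the Laplacian computation alone. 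As a sanity check, your two bounds are consistent with the extremes: at the Fuchsian locus ($q=0$) one computes $3\psi_{2}-\psi_{1}=\log(4/3)$, and in the large-$\|q\|$ regime $3\psi_{2}-\psi_{1}\to 0$, so both inequalities are sharp.
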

\begin{proof} Let $f=3\psi_{2}-\psi_{1}$. We compute
\begin{align*}
    \Delta_{\sigma} f&= 3\Delta_{\sigma}\psi_{2}-\Delta_{\sigma}\psi_{1} \\
    &=3e^{2\psi_{2}}-4e^{\psi_{1}-\psi_{2}}+e^{-2\psi_{1}}\|q\|_{\sigma}^{2} \\
    &\geq 3e^{2\psi_{2}}-4e^{\psi_{1}-\psi_{2}} \\
    &=e^{\psi_{1}-\psi_{2}}(3e^{f}-4) \ .
\end{align*}
By the maximum principle, 
\[
    e^{3\psi_{2}-\psi_{1}}=e^{f} \leq \frac{4}{3} 
\]
as claimed. 
\end{proof}

Together with Proposition \ref{prop:neg_curv}, the above estimates allow to study the coarse behavior of the area of the maximal surface in terms of the area of the flat metric $4|q|^{\frac{1}{2}}$:

\begin{prop}\label{prop:area_coarse} For any open region $R \subseteq S$, we have
\[
    \Area(R,4|q|^{\frac{1}{2}}) \leq \Area(R,g) \leq \frac{3}{2}\Area(R,4|q|^{\frac{1}{2}})+6\pi|\chi(S)| \ .
\]
\end{prop}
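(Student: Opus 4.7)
The lower bound is immediate from Proposition \ref{prop:sub_ineq}: since $4|q|^{\frac{1}{2}} \leq g$ pointwise as symmetric tensors, area monotonicity yields $\Area(R, 4|q|^{\frac{1}{2}}) \leq \Area(R, g)$ for every measurable $R \subseteq S$.

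For the upper bound, the strategy is to express the pointwise gap between the two area forms in terms of $-\kappa(g)\,dA_g$ and apply Gauss--Bonnet. Set $F = e^{\psi_1-\psi_2}$ and $Q = \|q\|_\sigma^{1/2}$, so that $g = 4F\sigma$ with $dA_g = 4F\,dA_\sigma$, and $4|q|^{\frac{1}{2}} = 4Q\sigma$. The curvature computation in the proof of Proposition \ref{prop:neg_curv} (equivalently, the Gauss equation combined with the difference of the two lines of (\ref{eq:PDE})) furnishes the pointwise identity
\[
-\kappa(g)\, dA_g \;=\; 2\bigl(2F - e^{-2\psi_1}\|q\|_\sigma^{2} - e^{2\psi_2}\bigr)\, dA_\sigma.
\]

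The next step is to bound the bracket from below by a multiple of $F - \tfrac{3}{2}Q$ using the a priori estimates already at hand. Proposition \ref{prop:sub_flat} gives $e^{\psi_1} \geq \|q\|_\sigma^{3/4}$, hence $e^{-2\psi_1}\|q\|_\sigma^{2} \leq Q$. Lemma \ref{lm:bound} rewrites as $e^{2\psi_2} = e^{3\psi_2-\psi_1}\, e^{\psi_1-\psi_2} \leq \tfrac{4}{3}F$. Substituting both estimates into the identity and clearing denominators yields the pointwise inequality
\[
4\bigl(F - \tfrac{3}{2}Q\bigr)\, dA_\sigma \;\leq\; -3\kappa(g)\, dA_g.
\]

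Finally, Proposition \ref{prop:neg_curv} makes the right-hand side a nonnegative measure, so integrating over $R \subseteq S$ and applying Gauss--Bonnet on the whole surface gives
\[
\int_R 4\bigl(F - \tfrac{3}{2}Q\bigr)\, dA_\sigma \;\leq\; -3\int_S \kappa(g)\, dA_g \;=\; -6\pi\chi(S) \;=\; 6\pi|\chi(S)|,
\]
which rearranges to $\Area(R, g) \leq \tfrac{3}{2}\Area(R, 4|q|^{\frac{1}{2}}) + 6\pi|\chi(S)|$. The one step requiring genuine care is the curvature identity in the second paragraph; after that the proof is an algebraic combination of the three pointwise bounds already established in this subsection together with the Gauss--Bonnet formula.
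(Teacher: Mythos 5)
Your proof is correct and follows essentially the same route as the paper: both arguments combine the curvature formula from Proposition \ref{prop:neg_curv} with the bounds $e^{-2\psi_1}\|q\|_\sigma^2 \leq \|q\|_\sigma^{1/2}$ (Proposition \ref{prop:sub_flat}) and $e^{2\psi_2} \leq \tfrac{4}{3}e^{\psi_1-\psi_2}$ (Lemma \ref{lm:bound}), then invoke negativity of $\kappa(g)$ and Gauss--Bonnet. The only difference is bookkeeping — you rearrange the pointwise identity before integrating, while the paper integrates first and then absorbs the resulting $\tfrac{2}{3}\Area(R,g)$ term — and the constants come out identically.
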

\begin{proof} The first inequality immediately follows from Proposition \ref{prop:sub_ineq}.\\
As for the second inequality, we start from the formula for the curvature of $g$ found in Proposition \ref{prop:neg_curv}:
\[
    \kappa(g)=\frac{e^{-2\psi_{1}}\|q\|_{\sigma}^{2}}{2e^{\psi_{1}-\psi_{2}}}+\frac{e^{2\psi_{2}}}{2e^{\psi_{1}-\psi_{2}}}-1 \ .
\]
Integrating over $R$, we find that
\begin{align*}
    \Area(R,g)&=\int_{R}\frac{e^{-2\psi_{1}}\|q\|_{\sigma}^{2}}{2e^{\psi_{1}-\psi_{2}}} dA_{g}+ \int_{R}\frac{e^{2\psi_{2}}}{2e^{\psi_{1}-\psi_{2}}} dA_{g} +\int_{R} -\kappa(g) dA_{g} \\
    &\leq 2\int_{R}e^{-2\psi_{1}}\|q\|_{\sigma}^{2} dA_{\sigma} + \frac{1}{2}\int_{R} e^{3\psi_{2}-\psi_{1}} dA_{g} + \int_{S} -\kappa(g) dA_{g} \\
    &= 2\int_{R}e^{-2\psi_{1}}\|q\|_{\sigma}^{2} dA_{\sigma} + \frac{1}{2}\int_{R} e^{3\psi_{2}-\psi_{1}} dA_{g} + 2\pi|\chi(S)| \ .
\end{align*}
Now, by Proposition \ref{prop:sub_flat}, we have
\[
    \int_{R} e^{-2\psi_{1}}\|q\|_{\sigma}^{2} dA_{\sigma} \leq \int_{R} \|q\|_{\sigma}^{-\frac{3}{2}}\|q\|_{\sigma}^{2} dA_{\sigma}=\Area(R, |q|^{\frac{1}{2}})
\]
and by Lemma \ref{lm:bound}, we can estimate
\[
    \int_{R} e^{3\psi_{2}-\psi_{1}} dA_{g} \leq \frac{4}{3}\Area(R,g) \ .
\]
Combining all, we get
\[
    \Area(R,g)\leq \frac{1}{2}\Area(R, 4|q|^{\frac{1}{2}})+\frac{2}{3}\Area(R,g)+2\pi|\chi(S)| \ ,
\]
which gives the desired upper-bound.
\end{proof}

We can finally show as promised that the space of induced metrics $\Ind(S)$ is parametrized by $\mathcal{Q}^{4}/S^{1}$.
\begin{prop}\label{prop:par_Ind} The map 
\begin{align*}
    \mathcal{G}: \mathcal{Q}^{4}/S^{1} &\rightarrow \Ind(S) \\
    (\sigma, [q]) & \mapsto 4e^{\psi_{1}-\psi_{2}}\sigma 
\end{align*}
is a homeomorphism.
\end{prop}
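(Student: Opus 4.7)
The previous proposition has already established that $\mathcal{G}$ is a bijection, so the plan is to show $\mathcal{G}$ is continuous and proper. For continuity I appeal to the implicit function theorem for the semilinear elliptic system (\ref{eq:PDE}). The uniform sub- and super-solutions from Lemmas \ref{lm:subsolution} and \ref{lm:supersolution} trap the solution in an order interval depending continuously on $(\sigma, q)$, the linearization at any solution is invertible by the maximum-principle uniqueness of the previous proposition, and Schauder theory then yields smooth dependence of $(\psi_{1}, \psi_{2})$ on $(\sigma, q)$. Since only $|q|^{2}$ enters the system, the map descends continuously to the $S^{1}$-quotient.

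For properness I start from a sequence $g_{n} = \mathcal{G}(\sigma_{n}, [q_{n}])$ converging smoothly to some $g_{\infty} \in \Ind(S)$ and extract a convergent subsequence of $(\sigma_{n}, [q_{n}])$. Smooth convergence of $g_{n}$ on the fixed smooth surface $S$ forces smooth convergence of the associated almost complex structures, and continuity of uniformization then gives $\sigma_{n} \to \sigma_{\infty}$ smoothly. In particular the conformal factor $e^{\psi_{1}^{n} - \psi_{2}^{n}} = g_{n}/(4\sigma_{n})$ converges smoothly to a smooth positive function.

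Next I disentangle $\psi_{1}^{n}$ and $\psi_{2}^{n}$ individually. Applying the maximum principle at extrema of $\psi_{2}^{n}$ in the second Hitchin equation, using only the just-obtained control on $\sigma_{n}$ and $e^{\psi_{1}^{n}-\psi_{2}^{n}}$, yields uniform two-sided $L^{\infty}$ bounds on $\psi_{2}^{n}$. Elliptic bootstrapping then gives uniform $C^{k,\alpha}$ bounds, so $\psi_{2}^{n}$ subconverges in $C^{\infty}$; the uniqueness argument from the previous proposition (the second equation admits a unique solution once $\sigma$ and $\psi_{1}-\psi_{2}$ are prescribed) identifies the limit as $\psi_{2}^{\infty}$, and hence $\psi_{1}^{n} = (\psi_{1}^{n} - \psi_{2}^{n}) + \psi_{2}^{n}$ converges as well. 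Rearranging the first Hitchin equation then expresses $\|q_{n}\|_{\sigma_{n}}^{2}$ as a smooth functional of $\sigma_{n}, \psi_{1}^{n}, \psi_{2}^{n}$, all of which converge smoothly, so $\|q_{n}\|_{\sigma_{n}}^{2} \to \|q_{\infty}\|_{\sigma_{\infty}}^{2}$. Since the modulus of a non-identically-zero holomorphic quartic differential determines it up to the $S^{1}$-action, this finally gives $[q_{n}] \to [q_{\infty}]$ in $\mathcal{Q}^{4}/S^{1}$.

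The step I expect to be the main obstacle is obtaining uniform $L^{\infty}$ bounds on $\psi_{2}^{n}$ in the absence of any a priori control on $\|q_{n}\|_{\sigma_{n}}$: the sub- and super-solutions of Lemmas \ref{lm:subsolution} and \ref{lm:supersolution} depend on $\max_{S} \|q\|_{\sigma}$, which is exactly what we are trying to bound. The resolution is to bypass them and feed the smooth convergence of $e^{\psi_{1}^{n} - \psi_{2}^{n}}$ directly into the maximum principle for the second equation alone, which does not involve $q$. Once this decoupling is achieved, the remaining convergence statements reduce to continuous dependence for a single scalar elliptic PDE followed by an algebraic inversion of the first Hitchin equation.
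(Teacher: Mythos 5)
Your proof is correct, but the properness half takes a genuinely different route from the paper's. For continuity the paper does not invoke the implicit function theorem: it uses the sub-/super-solution bounds to get uniform $C^{1,\alpha}$ control on $\psi_{i,n}$, extracts a weak limit, identifies it by uniqueness, and upgrades to smooth convergence by elliptic regularity. Your IFT route is legitimate and is essentially what the paper does later (Lemma \ref{lm:smooth_dependence}) for the $t$-dependence along rays, but note that invertibility of the linearization does not follow formally from ``maximum-principle uniqueness of the nonlinear problem''; you need the negative-definiteness of the linearized operator $\Delta_{\sigma}+M$ (the paper cites \cite{lopez-gomez1994} for this), so you should justify that step the same way. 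For properness the paper argues much more quickly: smooth convergence of $g_{n}$ gives $\sigma_{n}\to\sigma$, Proposition \ref{prop:area_coarse} bounds $\Area(S,4|q_{n}|^{\frac{1}{2}})$ by $\Area(S,g_{n})$, compactness of holomorphic differentials over the thick part then gives a convergent subsequence $q_{n}\to q'$, and continuity plus injectivity of $\mathcal{G}$ forces $[q']=[q]$. Your decoupling argument --- $L^{\infty}$ bounds on $\psi_{2}^{n}$ from the maximum principle applied to the second ($q$-free) equation, bootstrap, identification of the limit by scalar uniqueness, and then algebraic recovery of $\|q_{n}\|_{\sigma_{n}}^{2}$ from the first equation --- is sound and yields strictly more (pointwise smooth convergence of $|q_{n}|$ rather than subsequential convergence of $q_{n}$), at the cost of more PDE work; the one step you elide is the passage from $|q_{n}|\to|q_{\infty}|$ to $[q_{n}]\to[q_{\infty}]$ in the quotient topology, which still requires precompactness of $\{q_{n}\}$ in the fibers (bounded sup norm over uniformly thick complex structures), i.e.\ exactly the compactness ingredient the paper's shorter argument is built on.
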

\begin{proof} We already know that $\mathcal{G}$ is bijective, so we only need to show that $\mathcal{G}$ is continuous and proper. Let $(\sigma_{n}, [q_{n}])$ be a sequence converging to $(\sigma, [q])$ in $\mathcal{Q}^{4}/S^{1}$. Let $\psi_{i,n}$ and $\psi_{i}$ for $i=1,2$ denote the solutions to Equation (\ref{eq:PDE}) with data $(\sigma_{n}, [q_{n}])$ and $(\sigma, [q])$, respectively. By Lemmas \ref{lm:subsolution} and \ref{lm:supersolution}, there is a uniform bound on $\Delta_{\sigma_{n}}\psi_{i,n}$, and hence a uniform $C^{1,\alpha}$ bound on $\psi_{i,n}$. We deduce that $\psi_{1,n}$ and $\psi_{2,n}$ converge to weak solutions of Equation (\ref{eq:PDE}) with data $(\sigma, [q])$, which, by uniqueness, consist of the pair $(\psi_{1},\psi_{2})$. By elliptic regularity, the convergence is actually smooth, and $\mathcal{G}$ is continuous.\\
\indent As for properness, assume that  $g_{n}=\mathcal{G}(\sigma_{n},[q_{n}])$ converges smoothly to $g=\mathcal{G}([\sigma,[q])$. It is clear that $\sigma_{n}$ converges to $\sigma$ because those are the hyperbolic metrics in the conformal class of $g_{n}$ and $g$, which converge. By Proposition \ref{prop:area_coarse}, the quantities $\Area(S,4|q_{n}|^{\frac{1}{2}})$ are uniformly bounded. Because $q_{n}$ are holomorphic differentials with respect to uniformly thick complex structures, the sequence $q_{n}$ is lies in a compact set, and thus converges, up to subsequences, to some $q'$. By continuity of $\mathcal{G}$, we find that $\mathcal{G}(\sigma_{n}, [q_{n}])$ converges to $\mathcal{G}(\sigma, [q'])$ and by injectivity, we have $[q']=[q]$. Hence, the sequence $(\sigma_{n}, [q_{n}])$ converges to $(\sigma, [q])$. 
\end{proof}

\subsection{Comparison at high energy}
We now improve some of the estimates of the previous section under the assumption that the area $\|q\|=\Area(S,|q|^{\frac{1}{2}})$ of the quartic differential metric is large. \\

We define the functions
\[
    u_{1}=\psi_{1}-\log(\|q\|_{\sigma}^{\frac{3}{4}}) \ \ \ \text{and} \ \ \ u_{2}=\psi_{2}-\log(\|q\|_{\sigma}^{\frac{1}{4}}) \ ,
\]
which represent the errors between the solutions to Equations (\ref{eq:PDE}) and the subsolutions found in Proposition \ref{prop:sub_flat}. Note that $u_{1}$ and $u_{2}$ are well-defined outside the zeros of $q$ and positive. Moreover, the induced metric can be rewritten as $g=4e^{u_{1}-u_{2}}|q|^{\frac{1}{2}}$. Because $g$ is negatively curved, the function $u_{1}-u_{2}$ is subharmonic and we have the following estimates outside the zeros of $q$:

\begin{lemma}\label{lm:upperbound}
Let $p \in S$ be at distance at least $r>0$ from the zeros of $q$. Then
\[
    (u_{1}-u_{2})(p) \leq \log\left( \frac{\Area(S,g)}{4\pi r^{2}}\right) \ .
\]
In particular, if $r_{0}$ denotes the distance for the renormalized metric $|q|^{\frac{1}{2}}/\|q\|$ of unit area, we have
\[
    (u_{1}-u_{2})(p) \leq \log\left( \frac{\Area(S,g)}{4\pi \|q\|r_{0}^{2}}\right) \ .
\]
\end{lemma}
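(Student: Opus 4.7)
The starting point is that the induced metric may be written as $g=4e^{u_{1}-u_{2}}|q|^{\frac{1}{2}}$, which expresses $g$ as a conformal rescaling of the (smooth, outside the zeros of $q$) flat metric $|q|^{\frac{1}{2}}$. A direct computation of the curvature under conformal change gives, away from the zeros of $q$,
\[
\kappa_{g}\;=\;-\frac{1}{8e^{u_{1}-u_{2}}}\,\Delta_{|q|^{\frac{1}{2}}}(u_{1}-u_{2}),
\]
so Proposition \ref{prop:neg_curv} (negative curvature of $g$) translates into $\Delta_{|q|^{\frac{1}{2}}}(u_{1}-u_{2})>0$. Thus $u_{1}-u_{2}$ is subharmonic with respect to the flat Laplacian, and since $t\mapsto e^{t}$ is convex and increasing, the function $e^{u_{1}-u_{2}}$ is subharmonic as well.

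Now work in natural flat coordinates near $p$ (those in which $q=dz^{4}$, i.e.\ $|q|^{\frac{1}{2}}=|dz|^{2}$). Since $p$ is at flat distance at least $r$ from the zeros of $q$, the metric disk $B_{r}(p)$ lies entirely in the smooth flat region, and the standard sub-mean-value inequality applied to $e^{u_{1}-u_{2}}$ gives
\[
e^{(u_{1}-u_{2})(p)}\;\leq\;\frac{1}{\pi r^{2}}\int_{B_{r}(p)} e^{u_{1}-u_{2}}\,dA_{|q|^{\frac{1}{2}}}.
\]
Using the area identity $dA_{g}=4e^{u_{1}-u_{2}}\,dA_{|q|^{\frac{1}{2}}}$ the right-hand side becomes $\frac{1}{4\pi r^{2}}\Area(B_{r}(p),g)$, and bounding by the total area yields
\[
e^{(u_{1}-u_{2})(p)}\;\leq\;\frac{\Area(S,g)}{4\pi r^{2}},
\]
whence the first inequality follows by taking logarithms.

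For the second inequality, observe that rescaling the flat metric to the unit-area metric $|q|^{\frac{1}{2}}/\|q\|$ contracts all lengths by the factor $1/\sqrt{\|q\|}$. Hence, if $r_{0}$ denotes the distance from $p$ to the zeros measured in the rescaled metric, $r=\sqrt{\|q\|}\,r_{0}$ and therefore $r^{2}=\|q\|\,r_{0}^{2}$; plugging this into the previous bound yields the stated estimate.

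\textbf{Expected main obstacle.} The only genuinely delicate point is justifying that the flat disk $B_{r}(p)$ is a bona fide Euclidean disk on which the mean-value inequality applies. Because the cone angles of $|q|^{\frac{1}{2}}$ at the zeros are $2\pi+\frac{k\pi}{2}\geq 2\pi$, the universal cover $(\widetilde{S},|q|^{\frac{1}{2}})$ is CAT(0) and one may work with a lift $\tilde p$; the ball of radius $r$ around $\tilde p$ is then truly Euclidean (no cone singularities are hit, by hypothesis on $r$), the exponential map is a diffeomorphism onto it, and the subharmonicity and area computations pull back isometrically. One then pushes the bound down to $S$, trading the multiplicity of the projection for the inequality $\Area(B_{r}(p),g)\leq\Area(S,g)$ used above.
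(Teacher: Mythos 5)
Your proof is correct and follows essentially the same route as the paper: both deduce from the negative curvature of $g=4e^{u_{1}-u_{2}}|q|^{\frac{1}{2}}$ that $u_{1}-u_{2}$ is flat-subharmonic, apply a sub-mean-value inequality on the flat $r$-ball about $p$, convert the average of $e^{u_{1}-u_{2}}$ into $\tfrac{1}{4\pi r^{2}}\Area(B,g)\leq\tfrac{1}{4\pi r^{2}}\Area(S,g)$, and rescale via $r^{2}=\|q\|r_{0}^{2}$. The only cosmetic difference is that you apply the sub-mean-value inequality directly to the subharmonic function $e^{u_{1}-u_{2}}$, whereas the paper applies it to $u_{1}-u_{2}$ and then exponentiates via Jensen's inequality; these are equivalent, and your CAT(0) justification of the Euclidean ball is a reasonable elaboration of a point the paper leaves implicit.
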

\begin{proof} Let $B$ be the ball centered at $p$ and radius $r$ for the flat metric $|q|^{\frac{1}{2}}$ that does not contain any zeros of $q$. By Jensen's inequality, we have
\[
    e^{(u_{1}-u_{2})(p)} \leq e^{\fint_{B} u_{1}-u_{2} dA_{q}} \leq \fint_{B} e^{u_{1}-u_{2}}dA_{q}= \frac{1}{4\pi r^{2}}\int_{B} dA_{g} \leq \frac{\Area(S,g)}{4\pi r^{2}} \ ,
\]
and the first estimate follows.\\
The second statement is a simple reformulation, using the fact that $r^{2}=r_{0}^{2}\|q\|$.
\end{proof}

In addition, because $3u_{2}-u_{1}=3\psi_{2}-\psi_{1}$ outside the zeros of $q$, combining the previous estimate with Lemma \ref{lm:bound}, we obtain a bound for $u_{1}+u_{2}$ in terms of the distance from the zeros:
\begin{equation}\label{eq:bound_error}
    u_{1}+u_{2}=(3u_{2}-u_{1})+2(u_{1}-u_{2})\leq \log\left(\frac{4}{3}\right)+2\log\left( \frac{\Area(S,g)}{4\pi \|q\|r_{0}^{2}}\right) \ .
\end{equation}
Note that, for $r_{0}$ fixed, the last term is uniformly bounded as the area of the quartic differential metric tends to infinity thanks to Proposition \ref{prop:area_coarse}. 

\begin{lemma}\label{lm:decay} Let $p$ be a point at distance at least $r_{0}>0$ from the zeros of $q$ with respect to the unit area metric $|q|^{\frac{1}{2}}/\|q\|$. Then, there are constants $c,C>0$, only depending on $r_{0}$, such that
\[
    (u_{1}+u_{2})(p)\leq Ce^{-c\|q\|^{\frac{1}{2}}r_{0}} \ ,
\]
for $\|q\|$ sufficiently large.
\end{lemma}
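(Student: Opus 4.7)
The plan is to reformulate the problem in terms of the intrinsic flat geometry of $|q|^{\frac{1}{2}}$ and reduce it to a linear elliptic comparison on a Euclidean disk. The key idea is that $u_{1}+u_{2}$ satisfies a semilinear equation that is essentially linear near the subsolution $u_{1}=u_{2}=0$; linearizing using the already proven upper bounds, the exponential decay is produced by a radially symmetric modified Bessel function supersolution.

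First I would derive the PDE for $u_{1}+u_{2}$. Away from the zeros of $q$, $\log|q|^{2}$ is harmonic, and combined with the curvature formula for $\sigma$ this gives $\Delta_{\sigma}\log\|q\|_{\sigma}^{\frac{3}{4}}=\tfrac{3}{4}\kappa(\sigma)$ and $\Delta_{\sigma}\log\|q\|_{\sigma}^{\frac{1}{4}}=\tfrac{1}{4}\kappa(\sigma)$. Subtracting these from the Hitchin equations (\ref{eq:PDE}), the curvature terms cancel exactly and one obtains
\begin{align*}
    \Delta_{\sigma}u_{1} &= \|q\|_{\sigma}^{\frac{1}{2}}\bigl(e^{u_{1}-u_{2}} - e^{-2u_{1}}\bigr),\\
    \Delta_{\sigma}u_{2} &= \|q\|_{\sigma}^{\frac{1}{2}}\bigl(e^{2u_{2}} - e^{u_{1}-u_{2}}\bigr).
\end{align*}
Summing and conformally rescaling to the flat metric $|q|^{\frac{1}{2}} = \|q\|_{\sigma}^{\frac{1}{2}}\sigma$, for which $\Delta_{|q|^{\frac{1}{2}}} = \|q\|_{\sigma}^{-\frac{1}{2}}\Delta_{\sigma}$, yields the clean equation
\[
    \Delta_{|q|^{\frac{1}{2}}}(u_{1}+u_{2}) = e^{2u_{2}} - e^{-2u_{1}},
\]
whose right-hand side vanishes precisely at the subsolution and is nonnegative by Proposition \ref{prop:sub_flat}.

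Next I would linearize using the estimates already at hand. Applying Equation (\ref{eq:bound_error}) at every point in the ball $B$ of $|q|^{\frac{1}{2}}$-radius $\rho := \frac{1}{2}\|q\|^{\frac{1}{2}}r_{0}$ centered at $p$, and using Proposition \ref{prop:area_coarse} to control $\Area(S,g)/\|q\|$, one obtains a uniform bound $u_{1}+u_{2}\leq C_{0}(r_{0})$ on $B$ for all $\|q\|$ large. Lemma \ref{lm:bound} then gives $u_{1}\leq 3(u_{1}+u_{2})\leq 3C_{0}$ on $B$. Writing $e^{2u_{2}}-e^{-2u_{1}} = e^{-2u_{1}}(e^{2(u_{1}+u_{2})}-1) \geq 2e^{-6C_{0}}(u_{1}+u_{2})$, the function $w := u_{1}+u_{2}$ satisfies the linear elliptic inequality $\Delta_{|q|^{\frac{1}{2}}}w\geq \alpha w$ on $B$ with $\alpha := 2e^{-6C_{0}}>0$ depending only on $r_{0}$.

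Finally I would close the estimate by a Bessel comparison. Since $B$ contains no zeros of $q$, the exponential map at $p$ for the flat metric $|q|^{\frac{1}{2}}$ identifies $B$ locally isometrically with the Euclidean disk $D(0, \rho)$. On this disk, the radially symmetric function
\[
    v(x) = C_{0}\,\frac{I_{0}(\sqrt{\alpha}\,|x|)}{I_{0}(\sqrt{\alpha}\,\rho)},
\]
where $I_{0}$ is the modified Bessel function of the first kind, satisfies $\Delta v = \alpha v$ and $v\equiv C_{0}\geq w$ on $\partial D(0,\rho)$. The weak maximum principle for the uniformly elliptic operator $\Delta - \alpha$ (whose zero-order coefficient is negative) gives $w\leq v$ on $B$, hence at the center
\[
    w(p) \leq \frac{C_{0}}{I_{0}(\sqrt{\alpha}\,\rho)} \leq C\, e^{-c\,\|q\|^{\frac{1}{2}}r_{0}}
\]
by the asymptotic $I_{0}(t)\sim e^{t}/\sqrt{2\pi t}$ as $t\to\infty$, for constants $c, C>0$ depending only on $r_{0}$. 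The main obstacle is verifying that $C_{0}(r_{0})$ is genuinely uniform in $\|q\|$: this requires the precise interplay between the logarithmic bound (\ref{eq:bound_error}) and the linear area estimate of Proposition \ref{prop:area_coarse}, which together keep the relevant ratio $\Area(S,g)/(\|q\|r_{0}^{2})$ bounded as $\|q\|\to\infty$. A secondary subtlety is that $B$ may fail to be embedded once $\rho$ exceeds the injectivity radius; this is handled by pulling back $w$ along the exponential map to the tangent plane at $p$, where the PDE is preserved since it is local and $\exp_{p}$ is a local isometry.
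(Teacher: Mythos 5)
Your proposal is correct and follows essentially the same route as the paper: derive $\Delta_{|q|^{1/2}}(u_1+u_2)=e^{2u_2}-e^{-2u_1}$, linearize to $\Delta w\geq \alpha w$ using the uniform bound from (\ref{eq:bound_error}) together with Proposition \ref{prop:area_coarse}, and compare with the radial $I_0$ supersolution on a flat disk to extract the exponential decay. Your use of the half-radius ball (so that every point of $\overline{B}$ stays a definite unit-area distance from the zeros, making the boundary bound genuinely uniform) is a small but welcome tightening of the paper's argument, which works on the full-radius ball.
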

\begin{proof} Let $B$ be the ball centered at $p$ of radius $r=\|q\|^{\frac{1}{2}}r_{0}$ which does not contain any zeros of $q$. On this ball, the function $u_{1}+u_{2}$ satisfies
\begin{align*}
    \Delta_{\sigma}(u_{1}+u_{2})&=\Delta_{\sigma}\psi_{1}+\Delta_{\sigma}\psi_{2}-2\Delta_{\sigma}\log(\sigma)\\
    &=e^{2\psi_{2}}-e^{-2\psi_{1}}\|q\|_{\sigma}^{2}
    =(e^{2u_{2}}-e^{-2u_{1}})\|q\|_{\sigma}^{\frac{1}{2}} \ ,
\end{align*}
which, choosing a coordinate $z$ so that $q=dz^{4}$ and $z(p)=0$, can be written as
\begin{align*}
    \bar{\partial}\partial(u_{1}+u_{2})=\Delta_{q}(u_{1}+u_{2}) =(e^{2u_{2}}-e^{-2u_{1}}) \ .
\end{align*}
Note that from Equation \ref{eq:bound_error}, there is a constant $a>0$, only depending on $r_{0}$ for $\|q\|$ large, such that
\[
    \bar{\partial}\partial(u_{1}+u_{2})=e^{-2u_{1}}(e^{2u_{2}+2u_{1}}-1)\geq 2a(u_{1}+u_{2}) \ .
\]
Moreover, again by Equation \ref{eq:bound_error}, the function $u_{1}+u_{2}$ is uniformly bounded by a constant $C>0$ on the boundary of $B$ that we can arrange to only depend on $r_{0}$ for $\|q\|$ large. By the maximum principle, we deduce that $u_{1}+u_{2}$ is bounded above by the radial solution of the Dirichlet problem
\[
    \begin{cases}
        \bar{\partial}{\partial}\eta=2a\eta \\
        \eta_{|_{\partial B}}=C
    \end{cases} \ .
\]
It is well-known that $\eta$ can be expressed in terms of the modified Bessel function of the first kind
\[
    \eta(\rho)=C\frac{I_{0}(2\sqrt{2a}\rho)}{I_{0}(2\sqrt{2a}r)}=C\frac{I_{0}(2\sqrt{2a}\rho)}{I_{0}(2\sqrt{2a}\|q\|^{\frac{1}{2}}r_{0})}\ .
\]
Recalling that
\[
    I_{0}(x)\sim \frac{e^{x}}{(2\pi x)^{\frac{1}{2}}} \ \ \ \text{as \ $x \to +\infty$} \ ,
\]
we find 
\[
    (u_{1}+u_{2})(p)\leq\eta(0) \leq Ce^{-c\|q\|^{\frac{1}{2}}r_{0}}
\]
for some $c>0$ and $\|q\|$ sufficiently large.
\end{proof}

\begin{cor}\label{cor:decay} Let $q_{n}$ be a sequence of holomorphic quartic differentials such that $\|q_{n}\| \to +\infty$ and $|q_{n}|^{\frac{1}{2}}/\|q\|$ converges geometrically to a flat metric induced by a meromorphic quartic differential $q'$ on a subsurface $S'\subset S$. Fix $\epsilon>0$. Then there is $n_{0} \in \mathbb{N}$ such that for all $n\geq n_{0}$ and all $p\in S'$ at $q'$-distance at least $2\epsilon$ from the zeros and poles of $q'$ we have
\[
    (u_{1,n}+u_{2,n})(p) \leq Ce^{-c\|q\|^{\frac{1}{2}}\epsilon} \ .
\]
In particular, $u_{1}$ and $u_{2}$ decay exponentially to $0$ outside the zeros and poles of $q'$.
\end{cor}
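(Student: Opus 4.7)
The plan is to derive Corollary \ref{cor:decay} as a direct consequence of Lemma \ref{lm:decay} together with the geometric convergence assumption. The key observation is that a point $p \in S'$ which is at $q'$-distance at least $2\epsilon$ from the zeros and poles of $q'$ must, for $n$ sufficiently large, be at distance at least $\epsilon$ from the zeros of $q_n$ with respect to the renormalized metric $|q_n|^{\frac{1}{2}}/\|q_n\|$. Once this distance control is established, the lemma applies with $r_{0}=\epsilon$, and the corollary follows immediately.

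First I would translate geometric convergence into a statement about the location of zeros. By the definition of geometric convergence on $\mathcal{Q}^{4}$ (Theorem \ref{thm:geometric_limit} and the discussion preceding it), on any compact subset $K \subset S'$ disjoint from the punctures (which carry the poles of $q'$), lifts of the rescaled differentials converge uniformly to the lift of $q'$ on compact sets of the universal cover. Since $q'$ has only isolated zeros in $K$, Hurwitz's theorem implies that for $n$ large the zeros of $q_n$ meeting a neighborhood of $K$ are confined to an arbitrarily small neighborhood of the zeros of $q'$. Moreover, the flat metrics $|q_n|^{\frac{1}{2}}/\|q_n\|$ converge uniformly to $|q'|^{\frac{1}{2}}$ on $K$, so distances with respect to these metrics agree up to an arbitrarily small multiplicative error on $K$.

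Next I would exploit this on the compact set
\[
    U_{\epsilon}=\{p \in S' \ | \ d_{q'}(p,\text{zeros and poles of }q') \geq 2\epsilon\} \ ,
\]
which lies in the thick part of $S'$. For $n$ sufficiently large, every zero of $q_n$ within a fixed $q'$-neighborhood of $U_\epsilon$ lies within $q'$-distance $\epsilon/2$ of some zero of $q'$, and the renormalized metric $|q_n|^{\frac{1}{2}}/\|q_n\|$ is close enough to $|q'|^{\frac{1}{2}}$ on this neighborhood that the $|q_n|^{\frac{1}{2}}/\|q_n\|$-distance from any $p \in U_\epsilon$ to the nearest zero of $q_n$ is at least $\epsilon$. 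Equivalently, with respect to the unrescaled flat metric $|q_n|^{\frac{1}{2}}$ this distance is at least $\|q_n\|^{\frac{1}{2}}\epsilon$.

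Finally, applying Lemma \ref{lm:decay} with $r_{0}=\epsilon$ at each such $p$ yields the bound
\[
    (u_{1,n}+u_{2,n})(p)\leq Ce^{-c\|q_n\|^{\frac{1}{2}}\epsilon} \ ,
\]
with $C,c>0$ depending only on $\epsilon$. Because $u_{1,n}$ and $u_{2,n}$ are non-negative by Proposition \ref{prop:sub_flat}, the same exponential bound controls each term individually, giving the "in particular" statement. The only genuinely nontrivial step is the first one, namely extracting convergence of the zero sets and uniform closeness of the flat metrics from the geometric convergence hypothesis; this is standard but must be invoked carefully since one must stay uniformly away from the punctures.
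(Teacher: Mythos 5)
Your proposal is correct and follows essentially the same route as the paper: use geometric convergence of the renormalized flat metrics to confine the zeros of $q_n$ near the singularities of $q'$, deduce that points at $q'$-distance $2\epsilon$ from the singular set are at renormalized distance at least $\epsilon$ from the zeros of $q_n$, apply Lemma \ref{lm:decay} with $r_0=\epsilon$ (using $\|q_n\|\to\infty$ to satisfy its largeness hypothesis), and invoke positivity of $u_{1,n},u_{2,n}$ for the individual bounds. The extra detail you supply on Hurwitz's theorem and uniform metric comparison is the step the paper only asserts, so nothing is missing.
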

\begin{proof} By assumption the normalized metrics $|q_{n}|^{\frac{1}{2}}/\|q\|$ converge uniformly on compact sets of $S'$ outside the cone singularities of $q'$. Therefore, for $n$ sufficiently large, the zeros of $q_{n}$ are all contained in balls of $q'$-radius $\epsilon/2$ centered at the singularities of $|q'|^{\frac{2}{3}}$. We deduce that there is $n_{0} \in \mathbb{N}$ such that for all $n\geq n_{0}$ the following conditions hold: (i) all $p\in S'$ at $q'$-distance at least $2\epsilon$ from the zeros and poles of $q'$ are at distance at least $\epsilon$ from the zeros of $q_{n}$ with respect to the unit area metrics $|q_{n}|^{\frac{1}{2}}/\|q\|$; (ii) the area $\|q_{n}\|$ is sufficiently large so that the estimate in Lemma \ref{lm:bound} holds with $r_{0}=\epsilon$. This gives the desired decay for $u_{1,n}$ and $u_{2,n}$ at $p$ because they are positive, hence smaller than $u_{1,n}+u_{2,n}$. 
\end{proof}

\begin{cor}\label{cor:conformal_factor} Under the assumptions of Corollary \ref{cor:decay}, the conformal factor of the induced metrics $g_{n}$ with respect to the flat metrics $4|q_{n}|$ converges to $1$ uniformly on compact sets outside the cone singularities of $q'$.
\end{cor}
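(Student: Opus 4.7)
The plan is to observe that the conformal factor of $g_{n}$ with respect to $4|q_{n}|^{\frac{1}{2}}$ is exactly $e^{u_{1,n}-u_{2,n}}$, and then control this quantity using the exponential decay of $u_{1,n}+u_{2,n}$ established in Corollary \ref{cor:decay}.

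First I would unpack the definitions. Writing $|q_{n}|^{\frac{1}{2}}=\|q_{n}\|_{\sigma_{n}}^{\frac{1}{2}}\sigma_{n}$ and $g_{n}=4e^{\psi_{1,n}-\psi_{2,n}}\sigma_{n}$, the ratio of the two metrics equals
\[
\frac{g_{n}}{4|q_{n}|^{\frac{1}{2}}}=\frac{e^{\psi_{1,n}-\psi_{2,n}}}{\|q_{n}\|_{\sigma_{n}}^{\frac{1}{2}}}=e^{u_{1,n}-u_{2,n}},
\]
where $u_{i,n}$ are the error functions introduced in the previous subsection. Thus the claim reduces to showing that $u_{1,n}-u_{2,n}\to 0$ uniformly on compact subsets of $S'$ away from the singularities of $q'$.

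The main observation is that Proposition \ref{prop:sub_flat} guarantees $u_{1,n}\geq 0$ and $u_{2,n}\geq 0$ wherever they are defined (i.e.\ away from the zeros of $q_{n}$). Therefore, for any compact set $K\subset S'$ that avoids a $2\epsilon$-neighborhood of the zeros and poles of $q'$, Corollary \ref{cor:decay} yields
\[
0\leq u_{1,n},\ u_{2,n}\leq u_{1,n}+u_{2,n}\leq Ce^{-c\|q_{n}\|^{\frac{1}{2}}\epsilon}
\]
on $K$ for all $n$ large. In particular $|u_{1,n}-u_{2,n}|\to 0$ uniformly on $K$, and continuity of the exponential gives $e^{u_{1,n}-u_{2,n}}\to 1$ uniformly on $K$, which is precisely the claim.

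Since the two ingredients (positivity from Proposition \ref{prop:sub_flat} and the exponential decay from Corollary \ref{cor:decay}) are already in hand, there is no real obstacle: the statement is essentially an immediate corollary once one recognizes the conformal factor as $e^{u_{1,n}-u_{2,n}}$ and uses that the sum $u_{1,n}+u_{2,n}$ dominates each of the non-negative summands.
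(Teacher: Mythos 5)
Your proof is correct and follows exactly the paper's argument: the conformal factor is $e^{u_{1,n}-u_{2,n}}$, and the positivity of $u_{1,n}$, $u_{2,n}$ together with the exponential decay of their sum from Corollary \ref{cor:decay} forces $u_{1,n}-u_{2,n}\to 0$ uniformly away from the singularities. The paper states this in one line; you have merely spelled out the same reasoning in more detail.
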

\begin{proof} This follows immediately from the fact that $g_{n}=4e^{u_{1,n}-u_{2,n}}|q_{n}|^{\frac{1}{2}}$ and the decay of $u_{1,n}$ and $u_{2,n}$ in the previous corollary.
\end{proof}

We now use this asymptotic decay to improve the estimate on the behavior of the area of the induced metric.
\begin{lemma}\label{lm:area_asymptotics} Let $q_{n}$ be a sequence of holomorphic quartic differentials such that 
\begin{enumerate}
    \item the area $\|q_{n}\|$ diverges to $+\infty$;
    \item the renormalized metrics $|q_{n}|^{\frac{1}{2}}/\|q\|$ converge to a non purely laminar mixed structure $\eta$.
\end{enumerate}
Then,
\[
    \lim_{n\to +\infty} \frac{\Area(S,g_{n})}{\Area(S,4|q_{n}|^{\frac{1}{2}})}=1 \ .
\]
\end{lemma}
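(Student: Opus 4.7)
\medskip

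\noindent\textbf{Proof proposal.} The lower bound
$$\Area(S,g_{n})\geq \Area(S,4|q_{n}|^{\frac{1}{2}})=4\|q_{n}\|$$
is immediate from Proposition \ref{prop:sub_ineq} (or from the lower half of Proposition \ref{prop:area_coarse}), so the ratio is always at least $1$. My plan is to sharpen the upper half of Proposition \ref{prop:area_coarse} by splitting the surface into a \emph{good} region where Corollary \ref{cor:conformal_factor} gives an essentially optimal comparison, and a \emph{bad} region whose $|q_{n}|^{\frac{1}{2}}$-mass becomes negligible in the renormalized limit.

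For fixed $\epsilon>0$ let $K_{\epsilon}\subset S'$ denote the set of points at $|q'|^{\frac{1}{4}}$-distance at least $2\epsilon$ from the zeros and poles of $q'$; this is a compact subset of $S'$. By Corollary \ref{cor:conformal_factor}, the conformal factor $e^{u_{1,n}-u_{2,n}}$ of $g_{n}$ with respect to $4|q_{n}|^{\frac{1}{2}}$ tends to $1$ uniformly on $K_{\epsilon}$, so for any $\delta>0$ and $n$ large enough
$$\Area(K_{\epsilon},g_{n})\leq (1+\delta)\,\Area(K_{\epsilon},4|q_{n}|^{\frac{1}{2}}).$$
On the complement $S\setminus K_{\epsilon}$ I invoke Proposition \ref{prop:area_coarse} to obtain
$$\Area(S\setminus K_{\epsilon},g_{n})\leq \tfrac{3}{2}\,\Area(S\setminus K_{\epsilon},4|q_{n}|^{\frac{1}{2}})+6\pi|\chi(S)|.$$
Summing the two estimates and dividing by $4\|q_{n}\|$ gives
$$\frac{\Area(S,g_{n})}{4\|q_{n}\|}\leq (1+\delta)\,\alpha_{n}(\epsilon)+\tfrac{3}{2}\bigl(1-\alpha_{n}(\epsilon)\bigr)+\frac{6\pi|\chi(S)|}{4\|q_{n}\|},$$
where $\alpha_{n}(\epsilon)=\Area(K_{\epsilon},|q_{n}|^{\frac{1}{2}})/\|q_{n}\|$.

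Next I would prove that $\alpha_{n}(\epsilon)\to 1$ in the iterated limit $\lim_{\epsilon\to 0}\lim_{n\to\infty}$. For this I use that, on $S'$ away from the cone points of $q'$, the normalized flat metrics $|q_{n}|^{\frac{1}{2}}/\|q_{n}\|$ converge geometrically to $|q'|^{\frac{1}{2}}$, hence $\alpha_{n}(\epsilon)\to \Area(K_{\epsilon},|q'|^{\frac{1}{2}})$ as $n\to\infty$. Then I would show that $\Area(S',|q'|^{\frac{1}{2}})=1$ by combining the continuity of the intersection form with the identity $i(L_{q'},L_{q'})=\tfrac{\pi}{2}\Area(S',|q'|^{\frac{1}{2}})$ and the fact that the limit mixed structure satisfies $i(\eta,\eta)=\tfrac{\pi}{2}$: since the lamination part $\lambda$ of $\eta$ has $i(\lambda,\lambda)=0$ and $i(L_{q'},\lambda)=0$ (disjoint supports), the whole self-intersection is carried by the flat part. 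Hence $\Area(K_{\epsilon},|q'|^{\frac{1}{2}})\to 1$ as $\epsilon\to 0$ by monotone convergence in $\epsilon$, and letting first $n\to\infty$, then $\epsilon\to 0$, and finally $\delta\to 0$, we obtain
$$\limsup_{n\to\infty}\frac{\Area(S,g_{n})}{4\|q_{n}\|}\leq 1,$$
which together with the lower bound yields the claim.

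The main technical point is the coordinated double limit: one must choose $\epsilon$ so that the bad region is small in the \emph{limit} measure, and then take $n$ large enough that the geometric convergence on $K_{\epsilon}$ is effective and the bad region's renormalized area is close to its limiting value. The coarse factor $3/2$ in Proposition \ref{prop:area_coarse} is harmless because it multiplies a quantity that tends to zero. The hypothesis that $\eta$ be non-purely laminar is used here only to know the flat piece $L_{q'}$ is non-empty, but what the argument really exploits is the stronger consequence $\Area(S',|q'|^{\frac{1}{2}})=1$ derived above, i.e.\ that \emph{all} the self-intersection mass of the limit is flat.
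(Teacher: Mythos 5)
Your proof is correct, and at its core it is the same strategy as the paper's: split $S$ into a good region where the exponential decay of $u_{1,n},u_{2,n}$ makes the comparison with $4|q_n|^{\frac12}$ sharp, and a bad region (neighborhoods of the singularities of $q'$ plus the non-flat part) handled by the coarse bound of Proposition \ref{prop:area_coarse} together with the fact that its renormalized flat area tends to $0$. Two differences are worth recording. First, you apply Corollary \ref{cor:conformal_factor} directly to $dA_{g_n}=4e^{u_{1,n}-u_{2,n}}dA_{|q_n|^{1/2}}$ on $K_\epsilon$, whereas the paper re-runs the curvature-integral decomposition $\Area(S,g_n)=I_1+I_2+2\pi|\chi(S)|$ and isolates the term $\tfrac{1}{2}\int e^{2u_{2,n}}dA_{q_n}$; your route is more direct and avoids that bookkeeping, at the harmless cost of the factor $\tfrac32$ on a region of vanishing normalized area. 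Second, you actually justify the identity $\Area(S',|q'|^{\frac12})=1$ via $i(\eta,\eta)=\tfrac{\pi}{2}$, the area formula for $L_{q'}$, and the vanishing of the laminar and cross terms (note $i(L_{q'},\delta_{\gamma_i})=0$ for the boundary curves, so the cross term vanishes even if $\lambda$ charges them); the paper merely asserts this with ``recall that $S'$ has unit area.'' The one point to make explicit is that this derivation of $i(\eta,\eta)=\tfrac{\pi}{2}$ uses that hypothesis (2) means convergence of the unit-area-normalized currents in $\mathcal{C}(S)$, not just in $\Pp\mathcal{C}(S)$ --- which is indeed how the lemma is invoked in the proof of Theorem \ref{thm:limit_induced}.
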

\begin{proof} By Proposition \ref{prop:area_coarse}, we already know that 
\[
    \frac{\Area(S,g_{n})}{\Area(S,4|q_{n}|^{\frac{1}{2}})} \geq 1
\]
for every $n\in \mathbb{N}$. It is thus sufficient to show that for every $\delta>0$, there is $n_{0} \in \mathbb{N}$ such that for every $n\geq n_{0}$ we have
\begin{equation}\label{eq:claim_bound}
    \frac{\Area(S,g_{n})}{\Area(S,4|q_{n}|^{\frac{1}{2}})} \leq 1+3\delta \ .
\end{equation}
From the proof of Proposition \ref{prop:area_coarse}, the area of the induced metric can be expressed as sum of three terms
\[
    \Area(S,g_{n})=\int_{S}\frac{e^{-2\psi_{1,n}}\|q_{n}\|_{\sigma_{n}}^{2}}{2e^{\psi_{1,n}-\psi_{2,n}}} dA_{g_{n}}+ \int_{S}\frac{e^{2\psi_{2,n}}}{2e^{\psi_{1,n}-\psi_{2,n}}} dA_{g_{n}} + 2\pi|\chi(S)|
\]
and we already showed that the first integral can be bounded above by
\[
    \int_{S}\frac{e^{-2\psi_{1,n}}\|q_{n}\|_{\sigma_{n}}^{2}}{2e^{\psi_{1,n}-\psi_{2,n}}} dA_{g_{n}} \leq 2\|q_{n}\| \ .
\]
The estimate (\ref{eq:claim_bound}) follows if we prove that for $n$ sufficiently large 
\begin{equation}\label{eq:claim2}
    \frac{1}{8\|q_{n}\|}\int_{S} e^{3\psi_{2,n}-\psi_{1,n}} dA_{g_{n}} \leq \frac{1}{2}+3\delta \ . 
\end{equation}
The non purely laminar mixed structure $\eta$ furnishes a decomposition of $S$ into incompressible subsurfaces. We denote by $S'$ the union of all subsurfaces on which $|q_{n}|^{\frac{1}{2}}/\|q\|$ converges geometrically to a flat metric induced by a meromorphic quartic differential $q'$ and we let $S''=S\setminus S'$. Recall that $S'$ has unit area for the flat metric $|q'|^{\frac{1}{2}}$. Choose $\epsilon>0$ such that the $|q'|^{\frac{1}{2}}$ area of the union $B$ of the balls of radius $\epsilon$ centered at the singularities of $q'$ is at most $\delta/2$. We observe that
\[
    1=\frac{\Area(S,|q_{n}|^{\frac{1}{2}})}{\|q_{n}\|}=\frac{1}{\|q_{n}\|}\left(\Area(S'\setminus B,|q_{n}|^{\frac{1}{2}})+\Area(B,|q_{n}|^{\frac{1}{2}})+\Area(S'',|q_{n}|^{\frac{1}{2}})\right)
\]
and 
\[
    \Area(S'\setminus B,|q_{n}|^{\frac{1}{2}}/\|q_{n}\|) \to \Area(S'\setminus B,|q'|^{\frac{1}{2}}) \geq 1-\delta/2
\]
hence 
\[
    \Area(B,|q_{n}|^{\frac{1}{2}}/\|q_{n}\|)+\Area(S'',|q_{n}|^{\frac{1}{2}}/\|q_{n}\|) \leq \delta \ 
\]
for $n$ sufficiently large.
We now split the domain of integration 
\[
    \int_{S} e^{3\psi_{2,n}-\psi_{1,n}} dA_{g_{n}}=\int_{S''} e^{3\psi_{2,n}-\psi_{1,n}} dA_{g_{n}}+\int_{B} e^{3\psi_{2,n}-\psi_{1,n}} dA_{g_{n}}+\int_{S'\setminus B} e^{3\psi_{2,n}-\psi_{1,n}} dA_{g_{n}}
\]
and proceed to estimate each term.  
By Lemma \ref{lm:bound} and Proposition \ref{prop:area_coarse} we have
\begin{align*}
    \frac{1}{8\|q_{n}\|}\int_{S''} e^{3\psi_{2,n}-\psi_{1,n}} dA_{g_{n}}&\leq \frac{1}{6\|q_{n}\|}\Area(S'',g_{n}) \\
    & \leq \frac{1}{\|q_{n}\|}\Area(S'',|q_{n}|^{\frac{1}{2}})+\frac{\pi|\chi(S)|}{\|q_{n}\|} \leq \delta+\frac{\delta}{3}
\end{align*}
for $n$ sufficiently large as $\|q_{n}\| \to +\infty$. Similarly,
\begin{align*}
    \frac{1}{8\|q_{n}\|}\int_{B} e^{3\psi_{2,n}-\psi_{1,n}} dA_{g_{n}} & \leq \frac{1}{6\|q_{n}\|}\Area(B,g_{n})\\
    &\leq \frac{1}{\|q_{n}\|}\Area(B,|q_{n}|^{\frac{1}{2}})+\frac{\pi|\chi(S)|}{\|q_{n}\|} \leq \delta+ \frac{\delta}{3}
\end{align*}
for $n$ large enough. For the last integral, we rewrite it in terms of $u_{1,n}$ and $u_{2,n}$
\begin{align*}
    \frac{1}{8\|q_{n}\|}\int_{S'\setminus B} e^{3\psi_{2,n}-\psi_{1,n}}dA_{g_{n}} &= \frac{1}{2\|q_{n}\|}\int_{S'\setminus B} e^{3u_{2,n}-u_{1,n}}e^{u_{1,n}-u_{2,n}} dA_{q_{n}} \\
    &= \frac{1}{2\|q_{n}\|}\int_{S'\setminus B} e^{2u_{2,n}} dA_{q_{n}} \ .
\end{align*}
By Corollary \ref{cor:decay}, the sequence $u_{2,n}$ converges to 0 uniformly on $S'\setminus B$ and $dA_{q_{n}}/\|q_{n}\|$ converges to $dA_{q'}$, hence the integral can be bounded by
\begin{align*}
    \frac{1}{2\|q_{n}\|}\int_{S'\setminus B} e^{2u_{2,n}} dA_{q_{n}} &\leq \frac{1}{2}\left(1+\frac{2}{3}\delta\right)\Area(S'\setminus B, |q_{n}|^{\frac{1}{2}}/\|q_{n}\|) \\
    &\leq \frac{1}{2}\left(1+\frac{2}{3}\delta\right)\Area(S, |q_{n}|^{\frac{1}{2}}/\|q_{n}\|) =\frac{1}{2}+\frac{\delta}{3}
\end{align*}
for $n$ sufficiently large and the estimate in Equation \ref{eq:claim2} follows. 
\end{proof}

\subsection{Limits of induced metrics} By Proposition \ref{prop:neg_curv}, the induced metrics on maximal surfaces are negatively curved, thus we can embed $\Ind(S)$ into the space of geodesic currents. Given an induced metric $g$, we will denote by $L_{g}$ the associated geodesic current. We recall the two main feature of this current:
\begin{enumerate}[i)]
    \item for every curve $\gamma \in \pi_{1}(S)$, we have $\ell_{g}(\gamma)=i(L_{g},\delta_{\gamma})$, where $\ell_{g}(\gamma)$ denotes the length of the unique geodesic representative of $\gamma$ for $g$;
    \item $i(L_{g},L_{g})=\frac{\pi}{2}\Area(S,g)$ \ .
\end{enumerate}

\noindent We have now all the ingredients to prove our main theorem.
\begin{thm} \label{thm:limit_induced} Let $(\sigma_{n},q_{n})\in \mathcal{Q}^{4}(S)$ be a sequence leaving every compact set. Let $g_{n}$ be the corresponding sequence of induced metrics. Then there is a sequence of positive real numbers $t_{n}$ and a mixed structure $\eta$ so that $t_{n}L_{g_{n}}\to \eta$. 
\end{thm}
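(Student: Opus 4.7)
The plan is to extract a limiting current via Bonahon's compactness of $\mathbb{P}\mathcal{C}(S)$ and then identify it as a mixed structure by comparing with the limit of the associated flat metrics $|q_n|^{\frac{1}{2}}$.

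First, I would normalize by self-intersection. Let $t_n = \bigl(\tfrac{\pi}{2}\Area(S,g_n)\bigr)^{-1/2}$; by Proposition \ref{prop:neg_curv} and Lemma \ref{lm:domination_constant}, the areas are finite and bounded below by $6\pi|\chi(S)|$, so $t_n$ is well-defined and bounded above. Then $i(t_n L_{g_n}, t_n L_{g_n}) = 1$, and passing to a subsequence yields $t_n L_{g_n} \to L_\infty$ in $\mathcal{C}(S)$, with $i(L_\infty,L_\infty) \leq 1$. In parallel, set $s_n = \bigl(\tfrac{\pi}{2}\|q_n\|\bigr)^{-1/2}$ so that $s_n L_{q_n}$ is the current of the unit-area flat metric $|q_n|^{\frac{1}{2}}/\|q_n\|$; by Theorem \ref{thm:limit_flat}, after a further subsequence $s_n L_{q_n} \to \mu$, a mixed structure $\mu = L_{q'} + \lambda$ with flat part coming from a meromorphic quartic differential $q'$ on a subsurface $S' \subset S$ and lamination $\lambda$ on the complement.

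Second, I would exploit the domination $L_{g_n} \geq 2 L_{q_n}$ coming from Proposition \ref{prop:sub_ineq}. Proposition \ref{prop:area_coarse} ensures that $t_n/s_n$ stays bounded, and in the high-energy regime $\|q_n\| \to \infty$ Lemma \ref{lm:area_asymptotics} refines this to $t_n/s_n \to 1/2$; combined with the pointwise inequality of lengths, this gives $L_\infty \geq \mu$ as functionals on closed curves in the limit. To identify $L_\infty$ as a mixed structure, I would apply the Bonahon-Iozzi-Parreau-Pozzetti current decomposition exactly as in Part I of the proof of Theorem \ref{thm:limit_flat}:
\[
L_\infty = \sum_{W} \nu_W + \sum_{\alpha \in \mathcal{E}} c_\alpha\, \delta_\alpha,
\]
where $\mathcal{E}$ collects the simple closed curves $\alpha$ with $i(L_\infty,\delta_\alpha)=0$ (satisfying the non-isolation condition) and $\{W\}$ are the components of $S \setminus \mathcal{E}$. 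On each $W$: if $\sys_W(L_\infty) = 0$ then $\nu_W$ is a measured lamination by \cite[Theorem 1.1]{BIPP}; if $\sys_W(L_\infty) > 0$, the inequality $L_\infty \geq \mu$ forces $W \subset S'$, and Corollary \ref{cor:conformal_factor} implies that the rescaled induced metrics $t_n^2 g_n$ converge uniformly on compact subsets of $W$ disjoint from the cone singularities of $q'$ to $|q'|^{\frac{1}{2}}|_W$; hence $\nu_W$ is itself the current of the flat metric associated with $q'|_W$.

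The case in which $\|q_n\|$ remains bounded away from zero (forcing $\sigma_n$ to degenerate in Teichm\"uller space) proceeds identically, since $t_n$ and $s_n$ stay comparable. In the degenerate regime $\|q_n\| \to 0$, Lemma \ref{lm:domination_constant} yields a two-sided comparison between $g_n$ and the hyperbolic metric $2\sigma_n$, reducing the problem to the classical Thurston limit of hyperbolic metrics, which is a measured lamination and hence a (purely laminar) mixed structure.

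The main obstacle is the identification of $\nu_W$ on subsurfaces with positive systole. Corollary \ref{cor:conformal_factor} only gives uniform control of the conformal factor between $g_n$ and $4|q_n|^{\frac{1}{2}}$ away from the cone singularities of $q'$, so promoting this to convergence of the length spectrum requires verifying that geodesic representatives for $g_n$ of non-peripheral curves in $W$ stay concentrated away from the singularities as $n \to \infty$. This should follow by combining the pointwise conformal-factor bound in Lemma \ref{lm:upperbound} with the exponential decay of $u_{1,n}+u_{2,n}$ from Lemma \ref{lm:decay} and Corollary \ref{cor:decay}, which together ensure that any tube around a singularity contributes negligibly to the length of a geodesic representative in the limit.
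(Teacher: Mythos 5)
Your overall architecture (compactness of $\mathbb{P}\mathcal{C}(S)$, comparison with the limiting flat structure, conformal-factor convergence on the flat pieces) is the paper's, but two steps fail as written.

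First, normalizing by self-intersection does not give subsequential convergence in $\mathcal{C}(S)$. If $\sup_n\|q_n\|<\infty$, then $(\sigma_n,q_n)$ leaving every compact set forces $\sigma_n$ to diverge in Teichm\"uller space, and Lemma \ref{lm:domination_constant} gives $g_n\geq -3\kappa(\sigma_n)\sigma_n$, so the length spectrum of $L_{g_n}$ blows up while $\Area(S,g_n)$ stays bounded; your $t_n$ is then bounded away from $0$ and $t_nL_{g_n}$ has no limit in $\mathcal{C}(S)$. The correct constants tend to $0$ and the limit is a measured lamination with \emph{zero} self-intersection. Consequently your middle case (``$\|q_n\|$ bounded away from $0$ and $\infty$'') does not ``proceed identically'' to the high-energy case --- it is exactly the purely laminar case --- and the ``two-sided comparison'' with $2\sigma_n$ you invoke when $\|q_n\|\to 0$ is not available: Lemma \ref{lm:domination_constant} is one-sided, and the supersolution constants of Lemma \ref{lm:supersolution} depend on $\max_S\|q_n\|^2_{\sigma_n}$, which is not controlled along a degenerating sequence.

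Second, and more seriously, the assertion that ``$\sys_W(L_\infty)>0$ and $L_\infty\geq\mu$ forces $W\subset S'$'' goes the wrong way. The domination $g_n\geq 4|q_n|^{\frac{1}{2}}$ only gives the \emph{lower} bound $i(L_\infty,\delta_\gamma)\geq i(\mu,\delta_\gamma)$, which is perfectly consistent with $L_\infty$ having positive systole on a component where $\mu$ is laminar or zero: the conformal factor $e^{u_{1,n}-u_{2,n}}\geq 1$ is controlled only away from the zeros of $q_n$, and those zeros accumulate precisely in the collapsing part of the flat metrics. Ruling this out requires an upper bound, and that is exactly the role of Lemma \ref{lm:area_asymptotics}: since $\Area(S,g_n)/\Area(S,4|q_n|^{\frac{1}{2}})\to 1$, one gets $i(\hat{L}_\infty,\hat{L}_\infty)=\frac{\pi}{2}=\sum_j i(L_{\tilde{q}_j},L_{\tilde{q}_j})$, so the residual current supported off the flat pieces of $\mu$ has vanishing self-intersection and is therefore a lamination. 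This is why the paper decomposes the surface along the flat limit $\mu$ rather than along $L_\infty$; the area computation is the ingredient your argument is missing, and without it the positive-systole components of $L_\infty$ outside $S'$ cannot be identified. Your closing concern about geodesics avoiding the singularities is legitimate but is disposed of in the paper by quoting that uniform convergence of metrics on compact sets away from the singularities implies convergence of the length spectrum (\cite[Proposition 5.3]{Charles_dPSL}).
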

\begin{proof} We distinguish two cases, depending on whether the area $\|q_{n}\|$ is uniformly bounded or not. \\
\underline{\textit{First case:} $\sup\|q_{n}\|<\infty$.} By Proposition \ref{prop:area_coarse}, the self-intersection $i(L_{g_{n}},L_{g_{n}})$ is uniformly bounded. We notice that, because $(\sigma_{n},q_{n})$ leaves every compact set, the sequence of constant curvature metrics $\sigma_{n}$ must necessarily diverge. Otherwise, up to subsequences, we could assume that $\sigma_{n}\to \sigma \in \mathcal{T}(S)$ and we could write $q_{n}=\tau_{n}\tilde{q}_{n}$, where 
\[
    \tau_{n}=\|q_{n}\|_{\infty}:=\max_{S}\frac{|q_{n}|^{2}}{\sigma_{n}^{4}} \to +\infty
\]
and $\tilde{q}_{n}$ converges uniformly to a non-vanishing quartic differential $\tilde{q}_{\infty} \in \mathcal{Q}(S,\sigma_{\infty})$, as unit spheres of holomorphic differentials over the thick part of Teichm\"uller space are compact. But then we would have
\[
    \|q_{n}\|=\int_{S}|\tau_{n}|^{\frac{1}{2}}|\tilde{q}_{n}|^{\frac{1}{2}}=|\tau_{n}|^{\frac{1}{2}}\int_{S} \frac{|\tilde{q}_{n}|^{\frac{1}{2}}}{\sigma_{n}} dA_{\sigma_{n}} \to +\infty
\]
because 
\[
    \frac{|\tilde{q}_{n}|^{\frac{1}{2}}}{\sigma_{n}} dA_{\sigma_{n}} \to \frac{|\tilde{q}_{\infty}|^{\frac{1}{2}}}{\sigma_{\infty}} dA_{\sigma_{\infty}} \neq 0 \ ,
\]
which would contradict our assumption that $\sup\|q_{n}\|<\infty$. Therefore, the sequence $2\sigma_{n}$ of hyperbolic metrics in the conformal class of $g_{n}$ diverges. Then, by Lemma \ref{lm:domination_constant}, the length spectrum of $L_{g_{n}}$ is unbounded. Since $\Pp\mathcal{C}(S)$ is compact, there exists a sequence $t_{n}\to 0$ such that $t_{n}L_{g_{n}}\to L_{\infty}$. We easily deduce that $i(L_{\infty},L_{\infty})=0$, hence $L_{\infty}$ is a measured lamination, that we can interpret as a mixed structure with no flat parts. \\
\underline{\textit{Second case:} $\sup\|q_{n}\|=\infty$.} By Proposition \ref{prop:area_coarse}, the self-intersection of $L_{g_{n}}$ diverges as $4\|q_{n}\|$. Since every geodesic current has finite self-intersection, we need to rescale $L_{g_{n}}$ at least by $\frac{1}{2\sqrt{\|q_{n}\|}}$. Let us denote
\[
    \hat{L}_{g_{n}}=\frac{1}{2\sqrt{\|q_{n}\|}}L_{g_{n}} \ .
\]
If the length spectrum of $\hat{L}_{g_{n}}$ is still unbounded, then there is a sequence $t_{n}\to 0$ such that $t_{n}\hat{L}_{g_{n}} \to \hat{L}_{\infty}$, which has vanishing self-intersection, thus $L_{h_{n}}$ converges projectively to a measured lamination.

If the length spectrum of $\hat{L}_{g_{n}}$ is uniformly bounded, then by Proposition \ref{prop:sub_flat}, the length spectrum of the unit area flat metrics $|q_{n}|^{\frac{1}{2}}/\|q_{n}\|$ is uniformly bounded as well. Thus, from the proof of Theorem \ref{thm:limit_flat}, the geodesic currents $L_{q_{n}}$ converges in $\Pp\mathcal{C}(S)$ to a mixed structure $\mu$ that is not purely laminar. This furnishes an orthogonal decomposition (for the intersection form $i$)  of the surface $S$ into a collection of incompressible subsurfaces $\{S_{j}'\}_{j=1}^{m}$, obtained by cutting $S$ along the simple closed curves $\gamma_{i}$, for which $\mu$ is induced by a flat metric on each $S_{j}'$ and is a measure lamination on the complement. Moreover, we can assume that each simple closed curve $\gamma_{i}$ bounds at least one flat part, induced by a meromorphic quartic differential $\tilde{q}_{j}$. On each $S_{j}'$, by Corollary \ref{cor:conformal_factor}, the conformal factor of the induced metric $g_{n}$ with respect to the flat metric $4|q_{n}|$ converges to $1$ uniformly on compact sets outside the cone singularities of $|\tilde{q}_{j}|^{\frac{1}{2}}$. This implies that
\[
    \frac{g_{n}}{4\|q_{n}\|} \to |\tilde{q}_{j}|^{\frac{1}{2}}
\]
uniformly on compact sets outside the zeros and the poles of $\tilde{q}_{j}$. We deduce that on each $S_{j}'$, we have
\[
    \hat{L}_{\infty}=\lim_{n \to +\infty}\hat{L}_{g_{n}}=\lim_{n\to +\infty}\frac{1}{\|q_{n}\|}L_{q_{n}}=L_{\tilde{q}_{j}} \ ,
\]
because uniform convergence of metrics implies convergence in the length spectrum (\cite[Proposition 5.3]{Charles_dPSL}). In particular, we notice that
\[
    \lim_{n \to +\infty}i(\hat{L}_{g_{n}}, \delta_{\gamma_{i}})=0
\]
so that the same collection of curves $\gamma_{i}$ can be used for the orthogonal decomposition of $\hat{L}_{\infty}$. Therefore, we can write
\[
    \hat{L}_{\infty}=\sum_{j=1}^{m}L_{\tilde{q}_{j}}+\lambda \ ,
\]
where $\lambda$ is a geodesic current supported in the complement of $\bigcup_{j}S_{j}'$. We claim that $\lambda$ is a measure lamination: in fact by Lemma \ref{lm:area_asymptotics}
\begin{align*}
    \frac{\pi}{2}&=\frac{\pi}{2}\lim_{n\to +\infty}\frac{\Area(S,g_{n})}{\|q_{n}\|}
    =\lim_{n \to +\infty}i(\hat{L}_{g_{n}},\hat{L}_{g_{n}})\\
    &=i(\hat{L}_{\infty}, \hat{L}_{\infty})
    =\sum_{j=1}^{m} i(L_{\tilde{q}_{j}}, L_{\tilde{q}_{j}})+i(\lambda, \lambda)\\
    &=\lim_{n\to +\infty} \frac{1}{\|q_{n}\|}i(L_{q_{n}},L_{q_{n}})+i(\lambda, \lambda)=\frac{\pi}{2}+i(\lambda, \lambda) \ ,
\end{align*}
so $\lambda$ has vanishing self-intersection. 
\end{proof}

\begin{proof}[Proof of Theorem \ref{thmA}] By Theorem \ref{thm:limit_induced}, we know that $\partial \overline{\Ind(S)}\subseteq \Pp\Mix(S)$. Consider now the family of induced metrics $g_{t}$ associated to a ray $(\sigma, tq) \in \mathcal{Q}^{4}(S)$ for a fixed unit area quartic differential. By Lemma \ref{lm:bound} and the proof of Theorem \ref{thm:limit_induced}, we know that $L_{g_{t}}$ converges projectively to $L_{q}$ as $t\to +\infty$. Therefore, $\partial \overline{\Ind(S)}\supseteq \overline{\Flat_{4}(S)}=\Pp\Mix(S)$, which proves the Theorem.
\end{proof}

\section{Entropy degeneration}
In this section we study the behavior of the volume entropy of the maximal surfaces along rays of quartic differentials. We will show that the entropy is maximal at the Fuchsian locus and monotonically decreases to zero along a ray.\\

We start with some background material. Let $g$ be a Riemannian metric on $S$, possibly with cone singularities, and denote by $\tilde{g}$ its lift to the universal cover $\widetilde{S}$. Fix a base point $p_{0}$ on $\widetilde{S}$. The volume entropy of $g$ can be defined as
\[
    \mathrm{Ent}(g)=\limsup_{r\to +\infty} \frac{\log(\#\{ p \in \widetilde{S} \ | \ p \in (\pi_{1}(S)\cdot p_{0}) \cap B_{\tilde{g}}(p_{0}, r)\})}{r} \ , 
\]
where $\pi_{1}(S)\cdot p_{0}$ denotes the orbit of $p_{0}$ under the action of $\pi_{1}(S)$ by deck transformations. It follows immediately from the definition that the volume entropy enjoys a scaling property: for any $t>0$,
\[
    \mathrm{Ent}(tg)=t^{-1}\mathrm{Ent}(g) \ ,
\]
and a monotonicity property: if $h$ is another Riemannian metric such that $h\leq g$, then 
\[
        \mathrm{Ent}(g)\leq \mathrm{Ent}(h) \ . 
\]
These facts are already sufficient to show the asymptotic behavior of the volume entropy of the maximal surfaces along a ray:
\begin{prop} Let $g_{t}$ be the induced metrics on the maximal surfaces associated to the parameters $(\sigma, tq) \in \mathcal{Q}^{4}(S)$ for some $q\neq 0$ and $t\in \R$. Then 
\[
    \lim_{t\to +\infty} \mathrm{Ent}(g_{t})=0 \ . 
\]
\end{prop}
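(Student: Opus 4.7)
The plan is to combine the pointwise domination of the induced metric by the flat quartic differential metric (Proposition~\ref{prop:sub_ineq}) with the two structural properties of volume entropy recalled in the paragraph just above the proposition: scaling and monotonicity.

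First, I would apply Proposition~\ref{prop:sub_ineq} to the datum $(\sigma, tq)$ for $t>0$ to obtain the pointwise inequality
$$g_t \;\geq\; 4\,|tq|^{1/2} \;=\; 4\sqrt{t}\,|q|^{1/2}$$
on $S$. The monotonicity property then gives $\mathrm{Ent}(g_t) \leq \mathrm{Ent}\bigl(4\sqrt{t}\,|q|^{1/2}\bigr)$, and the scaling property reduces this to
$$\mathrm{Ent}(g_t) \;\leq\; \frac{1}{4\sqrt{t}}\,\mathrm{Ent}\bigl(|q|^{1/2}\bigr).$$
As soon as $\mathrm{Ent}(|q|^{1/2})$ is finite, taking $t \to +\infty$ yields the desired $\mathrm{Ent}(g_t) \to 0$. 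Since $q$ is fixed and nonzero, the case $t<0$ is identical after noting $|tq|^{1/2}=|t|^{1/2}|q|^{1/2}$.

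The only remaining point is the finiteness of the entropy of the flat cone metric $|q|^{1/2}$ on the closed surface $S$. This is a standard fact about singular flat metrics with conical singularities of angle $2\pi + k\pi/2$: such a metric is nonpositively curved (its universal cover is CAT$(0)$) and $\pi_1(S)$ acts cocompactly, so the identity map from $(S,|q|^{1/2})$ to $(S,\sigma_0)$, for any background hyperbolic metric $\sigma_0$, lifts to a $\pi_1(S)$-equivariant quasi-isometry between the two universal covers (the cone angles are bounded above and bounded away from zero, and $S$ is closed). Since volume entropy is a quasi-isometry invariant and $\mathrm{Ent}(\sigma_0)$ is finite, we conclude $\mathrm{Ent}(|q|^{1/2})<\infty$. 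I do not anticipate any genuine obstacle in the argument; the whole proof is essentially a one-line consequence of Proposition~\ref{prop:sub_ineq}, with the finiteness of $\mathrm{Ent}(|q|^{1/2})$ being a routine but necessary verification.
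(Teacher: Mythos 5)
Your proposal is correct and follows essentially the same argument as the paper: apply Proposition \ref{prop:sub_ineq} to get $g_t \geq 4t^{1/2}|q|^{1/2}$, then use monotonicity and the scaling property of volume entropy to conclude $\mathrm{Ent}(g_t) \leq t^{-1/2}\,\mathrm{Ent}(4|q|^{1/2}) \to 0$. Your added remark on the finiteness of $\mathrm{Ent}(|q|^{1/2})$ is a routine verification the paper leaves implicit.
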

\begin{proof} By Proposition \ref{prop:sub_ineq}, we know that $g_{t}\geq 4t^{\frac{1}{2}}|q|^{\frac{1}{2}}$, hence
\[
    0\leq \mathrm{Ent}(g_{t}) \leq \mathrm{Ent}(4t^{\frac{1}{2}}|q|^{\frac{1}{2}})
    =t^{-\frac{1}{2}}\mathrm{Ent}(4|q|^{\frac{1}{2}}) \xrightarrow{t\to +\infty} 0 \ .
\]
\end{proof}

In order to improve this result to Theorem \ref{thmC}, we need the following useful formula for the variation of the volume entropy along a path of smooth Riemannian metrics: 

\begin{thm}[\cite{variation_entropy}]\label{thm:variation_entropy} Let $g_{t}$ be a smooth path of negatively curved Riemannian metrics on a closed manifold $S$. Then 
\[
    \frac{d}{dt}\mathrm{Ent}(g_{t})_{|_{t=t_{0}}}=-\frac{\mathrm{Ent}(g_{t_{0}})}{2}\int_{T^{1}S}\frac{d}{dt}g_{t}(v,v)_{|_{t=t_{0}}} d\mu_{t_{0}} \ ,
\]
where $\mu_{t_{0}}$ denotes the Bowen-Margulis measure on the unit tangent bundle $T^{1}S$ of $(S,g_{t_{0}})$. 
\end{thm}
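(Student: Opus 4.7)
The plan is to reduce the variation of volume entropy to the variation of the topological entropy of the family of geodesic flows, and then to differentiate Margulis' prime orbit theorem. By a classical result of Manning, in negative curvature $\mathrm{Ent}(g_t)$ equals the topological entropy $h(g_t)$ of the geodesic flow $\phi^s_{g_t}$ on $T^1_{g_t}S$. To work with a fixed underlying manifold, I would identify the unit tangent bundles via the smooth diffeomorphisms $F_t:T^1_{g_{t_0}}S \to T^1_{g_t}S$, $F_t(v) = v/\sqrt{g_t(v,v)}$. Pulling back $\phi^s_{g_t}$ by $F_t$ yields a smooth family $\widetilde{\phi}^s_t$ of Anosov flows on the fixed manifold $T^1_{g_{t_0}}S$ with $h_{top}(\widetilde{\phi}^s_t)=h(g_t)$, and in which the period of the closed orbit associated to a free homotopy class $c$ equals the $g_t$-length of its $g_t$-geodesic representative.

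The first key step is the first variation of arc length. If $c$ is the closed $g_{t_0}$-geodesic, parameterized by $g_{t_0}$-arclength over $[0,L]$, then
\[
\ell_{g_t}(c) = \int_0^L \sqrt{g_t(\dot c,\dot c)}\,du,\qquad \frac{d}{dt}\ell_{g_t}(c)\Big|_{t=t_0} = \frac{1}{2}\int_0^L \dot g_t(\dot c,\dot c)\big|_{t=t_0}\,du.
\]
The second key step is the equidistribution of closed geodesics with respect to the Bowen–Margulis measure: the empirical measures supported on primitive closed $g_{t_0}$-geodesics of length $\leq T$, suitably normalized, converge to $\mu_{t_0}$ on $T^1_{g_{t_0}}S$. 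Applied to $f(v)=\tfrac{1}{2}\dot g_t(v,v)|_{t=t_0}$, this shows that a typical closed $g_{t_0}$-geodesic of length $\sim T$ has $g_t$-length approximately $T(1+(t-t_0)A)$, where $A=\frac{1}{2}\int \dot g_t(v,v)|_{t=t_0}\,d\mu_{t_0}$. Combined with Margulis' prime orbit asymptotic $N_{g_t}(T)\sim e^{h(g_t)T}/(h(g_t)T)$, this forces $h(g_t)\sim h(g_{t_0})/(1+(t-t_0)A)$, whence $\frac{d h}{dt}\big|_{t=t_0} = -h(g_{t_0})A$, which is exactly the claimed formula.

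The main technical obstacle is the rigorous differentiation of the Margulis asymptotic: one needs uniform control on its error terms across the family $g_t$ and smoothness in $t$ of the orbit-counting function. This can be handled either via the smooth (meromorphic) dependence of the Ruelle zeta function on the Anosov flow (Pollicott–Ruelle, Dyatlov–Zworski), whose abscissa of convergence is exactly $h$, or more directly via the smoothness of the topological pressure $P_{\widetilde{\phi}^s_t}(\cdot)$ for H\"older potentials, combined with the variational characterization of $h$ as $P(0)$ and the fact that the Bowen–Margulis measure is the unique equilibrium state at the zero potential. With either approach, the computation above is legitimized as a first-order expansion of the pressure function.
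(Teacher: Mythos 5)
The paper does not prove this statement: it is quoted verbatim as an external result from the cited reference (Katok--Knieper--Weiss, ``Formulas for the derivative and critical points of topological entropy for Anosov and geodesic flows''), so there is no in-paper argument to compare against. Your outline is, in substance, a correct reconstruction of that reference's proof: Manning's identification of volume entropy with the topological entropy of the geodesic flow, transfer to a fixed unit tangent bundle by the rescaling maps $F_t$, the first variation of length producing the potential $\tfrac12\dot g_{t_0}(v,v)$, and the rigorous backbone being the pressure equation $P_{\phi_{t_0}}(-s\,\tau_t)=0$ characterizing $h(g_t)$, differentiated via the fact that the derivative of pressure is integration against the equilibrium state, which at the zero potential is the (normalized) Bowen--Margulis measure. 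Two small points deserve explicit mention if you write this up: first, your formula $\frac{d}{dt}\ell_{g_t}(c)\big|_{t=t_0}=\tfrac12\int_0^L\dot g_{t_0}(\dot c,\dot c)\,du$ computes the variation of the length of the \emph{fixed} curve $c$, and you need the standard observation that this agrees with the variation of the marked length spectrum because the $g_{t_0}$-geodesic is a critical point of the $g_{t_0}$-length functional; second, the ``differentiate Margulis' asymptotic'' paragraph is only a heuristic (equidistribution controls typical orbits, not the full counting function uniformly in $t$), so the pressure/equilibrium-state argument you name at the end is not an optional alternative but the actual proof -- which is exactly how the cited paper proceeds. With that understood, your proposal is sound.
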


\noindent Because the induced metrics $g_{t}$ can be written as $g_{t}=4e^{\psi_{1}^{t}-\psi_{2}^{t}}\sigma$, with $\psi_{1}^{t}$ and $\psi_{2}^{t}$ being the solutions to the system
\begin{equation}\label{eq:PDE_ray}
\begin{cases}
\Delta_{\sigma} \psi_{1}^{t} =e^{\psi_{1}^{t} - \psi_{2}^{t}}-e^{-2\psi_{1}^{t}} \|tq\|^{2}_{\sigma} +\frac{3}{4}\kappa(\sigma) \\ 
\Delta_{\sigma} \psi_{2} =e^{2\psi_{2}^{t}}- e^{\psi_{1}^{t} - \psi_{2}^{t}} + \frac{1}{4}\kappa(\sigma)
\end{cases} \ ,
\end{equation}
Theorem \ref{thmC} follows if we show that $\psi_{1}^{t}-\psi_{2}^{t}$ depends smoothly on $t$ and its derivative is always positive. This is the content of the next two lemmas.

\begin{lemma}\label{lm:smooth_dependence} For any $(\sigma, q)\in \mathcal{Q}^{4}(S)$, the solutions $(\psi_{1}^{t}, \psi_{2}^{t})$ to the system (\ref{eq:PDE_ray}) depend smoothly on $t\in \R$. 
\end{lemma}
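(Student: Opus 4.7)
The plan is to apply the implicit function theorem to a Banach space formulation of (\ref{eq:PDE_ray}). Fix $\alpha \in (0,1)$ and define the smooth map
\[
F \colon \R \times C^{2,\alpha}(S) \times C^{2,\alpha}(S) \longrightarrow C^{0,\alpha}(S) \times C^{0,\alpha}(S),
\]
whose two components send $(t,\psi_1,\psi_2)$ to $\Delta_\sigma \psi_i$ minus the right-hand side of (\ref{eq:PDE_ray}); note that $F$ is polynomial in $t$ and real-analytic in $(\psi_1,\psi_2)$. Since $F(t,\psi_1^t,\psi_2^t) = 0$ by definition, smoothness of $t \mapsto (\psi_1^t,\psi_2^t)$ near each $t_0 \in \R$ will follow, provided the partial derivative
\[
L := D_{(\psi_1,\psi_2)}F(t_0,\psi_1^{t_0},\psi_2^{t_0})
\]
is a topological isomorphism.

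A direct linearization yields $L\vec\phi = \Delta_\sigma \vec\phi - M\vec\phi$ with $\vec\phi = (\dot\psi_1,\dot\psi_2)^T$ and
\[
M = \begin{pmatrix} e^{\psi_1-\psi_2} + 2e^{-2\psi_1}\|t_0 q\|_\sigma^2 & -e^{\psi_1-\psi_2} \\ -e^{\psi_1-\psi_2} & e^{\psi_1-\psi_2} + 2e^{2\psi_2} \end{pmatrix}.
\]
The matrix $M$ is symmetric with positive diagonal, and a short expansion gives
\[
\det M = 2e^{\psi_1+\psi_2} + 2\|t_0 q\|_\sigma^2\left(e^{-\psi_1-\psi_2} + 2e^{2\psi_2-2\psi_1}\right) > 0,
\]
so $M$ is pointwise positive definite. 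If $L\vec\phi=0$, then $\Delta_\sigma\vec\phi = M\vec\phi$; combining the identity $\Delta_\sigma|\vec\phi|^2 = 2\langle\vec\phi,\Delta_\sigma\vec\phi\rangle + 2|\nabla\vec\phi|^2_\sigma$ with $\int_S \Delta_\sigma u\, dA_\sigma = 0$ on the closed surface $S$, one obtains
\[
0 = 2\int_S \langle\vec\phi,M\vec\phi\rangle\, dA_\sigma + 2\int_S |\nabla\vec\phi|^2_\sigma\, dA_\sigma,
\]
and both non-negative summands must vanish identically, forcing $\vec\phi \equiv 0$ by positive definiteness of $M$. Since $L$ is elliptic, formally self-adjoint (because $M$ is symmetric), and a compact zeroth-order perturbation of $\Delta_\sigma \oplus \Delta_\sigma$, it is a Fredholm operator of index zero between the chosen Hölder spaces, and triviality of its kernel then forces bijectivity. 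The implicit function theorem produces a smooth local branch $t \mapsto (\psi_1^t,\psi_2^t)$ near $t_0$, which coincides with the global assignment by the uniqueness of the solution to (\ref{eq:PDE_ray}); elliptic bootstrapping upgrades the Hölder-space dependence to $C^\infty$ smoothness in $t$.

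The main obstacle is the kernel triviality of $L$, which rests entirely on the pointwise positive definiteness of the coupling matrix $M$. This is a clean structural feature of the cyclic $\SO_0(2,3)$-Hitchin system: the positive diagonal contributions $2e^{-2\psi_1}\|t_0 q\|_\sigma^2$ and $2e^{2\psi_2}$ arising from the differentiation of the nonlinearities always dominate the off-diagonal $-e^{\psi_1-\psi_2}$ term, even at the zeros of $q$ where the first contribution vanishes.
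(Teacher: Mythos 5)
Your proof is correct and follows essentially the same route as the paper: both set up the same map $F$ on H\"older spaces and apply the implicit function theorem after checking that the linearization at $(\psi_1^{t_0},\psi_2^{t_0})$ is an isomorphism. The only difference is that where the paper cites a reference for the invertibility of the linearized operator, you supply a self-contained verification (pointwise positive definiteness of $M$, integration by parts to kill the kernel, and Fredholm index zero), and that verification is sound.
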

\begin{proof} Consider the map $F:C^{2,\alpha}(S)\times C^{2,\alpha}(S)\times \R \rightarrow C^{0,\alpha}(S)\times C^{0,\alpha}(S)$ defined by
\[
    F(u_{1},u_{2},t)=\begin{pmatrix}
    \Delta_{\sigma}u_{1}-e^{u_{1}-u_{2}}+e^{-2u_{1}}\|tq\|^{2}_{\sigma}-\frac{3}{4}\kappa(\sigma) \\
    \Delta_{\sigma}u_{2}-e^{2u_{2}}+e^{u_{1}-u_{2}}-\frac{1}{4}\kappa(\sigma)
     \end{pmatrix} \ .
\]
Note that the solution $(\psi_{1}^{t},\psi_{2}^{t})$ to Equation (\ref{eq:PDE_ray}) satisfies $F(\psi_{1}^{t},\psi_{2}^{t},t)=0$. At any $t_{0}\in \R$, the linearization
\begin{align*}
    &dF_{(\psi_{1},\psi_{2})}(\psi_{1}^{t_{0}},\psi_{2}^{t_{0}},t_{0}): C^{2,\alpha}(S)\times C^{2,\alpha}(S)\rightarrow C^{0,\alpha}(S)\times C^{0,\alpha}(S) \\
    \begin{pmatrix} \dot{\psi}_{1} \\ \dot{\psi}_{2}
    \end{pmatrix}& \mapsto  \left[\Delta_{\sigma}+
    \begin{pmatrix}
    -e^{\psi_{1}^{t_{0}}-\psi_{2}^{t_{0}}}-2e^{\psi_{1}^{t_{0}}}\|t_{0}q\|^{2} & e^{\psi_{1}^{t_{0}}-\psi_{2}^{t_{0}}} \\
    e^{\psi_{1}^{t_{0}}-\psi_{2}^{t_{0}}} & 
    -2e^{2\psi_{2}^{t_{0}}}-e^{\psi_{1}^{t_{0}}-\psi_{2}^{t_{0}}}
    \end{pmatrix}\right]\begin{pmatrix} \dot{\psi}_{1} \\ \dot{\psi}_{2}
    \end{pmatrix}
\end{align*}    
is a negative definite operator (\cite{lopez-gomez1994}), hence invertible. By the Implicit Function Theorem for Banach spaces, the solution to Equation (\ref{eq:PDE_ray}) depends smoothly on $t$ in a neighborhood of $t_{0}$. 
\end{proof}

\begin{lemma}\label{lm:monotonicity} For every $t_{0}>0$, we have
\[
    \frac{d}{dt}(\psi_{1}^{t}-\psi_{2}^{t})_{|_{t=t_{0}}}:=\dot{\psi}_{1}^{t_{0}}-\dot{\psi}_{2}^{t_{0}}>0
\]
at every point on $S$. 
\end{lemma}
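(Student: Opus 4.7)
The plan is to apply the maximum principle to $\varphi:=\dot{\psi}_{1}^{t_{0}}-\dot{\psi}_{2}^{t_{0}}$, combined with the pointwise a priori bounds from Proposition \ref{prop:sub_ineq} and Lemma \ref{lm:bound}. Differentiating the ray system (\ref{eq:PDE_ray}) at $t=t_{0}$ gives a linear elliptic system for $(\dot{\psi}_{1},\dot{\psi}_{2})$; writing $\gamma=e^{\psi_{1}-\psi_{2}}$, $\delta=e^{-2\psi_{1}}\|t_{0}q\|_{\sigma}^{2}$ and $\epsilon=e^{2\psi_{2}}$ at the solution $(\psi_{1}^{t_{0}},\psi_{2}^{t_{0}})$, it reads
\begin{align*}
    \Delta_{\sigma}\dot{\psi}_{1} &= (\gamma+2\delta)\dot{\psi}_{1}-\gamma\dot{\psi}_{2}-2\delta/t_{0}, \\
    \Delta_{\sigma}\dot{\psi}_{2} &= -\gamma\dot{\psi}_{1}+(\gamma+2\epsilon)\dot{\psi}_{2},
\end{align*}
and subtracting yields
\[
    \Delta_{\sigma}\varphi-2\gamma\varphi=2\delta\dot{\psi}_{1}-2\epsilon\dot{\psi}_{2}-2\delta/t_{0}.
\]

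As a preliminary step I would show $\dot{\psi}_{1},\dot{\psi}_{2}\geq 0$ on $S$ via a two-variable maximum principle: at a point $p_{0}$ achieving $\min(\min\dot{\psi}_{1},\min\dot{\psi}_{2})$, say on $\dot{\psi}_{1}$, one has $\dot{\psi}_{2}\geq\dot{\psi}_{1}$ at $p_{0}$, so $\Delta_{\sigma}\dot{\psi}_{1}(p_{0})\geq 0$ combined with the first linearized equation forces $\delta(p_{0})(\dot{\psi}_{1}(p_{0})-1/t_{0})\geq 0$; either $\dot{\psi}_{1}(p_{0})\geq 1/t_{0}>0$, or $\delta(p_{0})=0$ and then $p_{0}$ is simultaneously a minimum of $\dot{\psi}_{2}$ for which the second equation collapses to $2\epsilon(p_{0})\dot{\psi}_{1}(p_{0})\geq 0$, again giving $\dot{\psi}_{1}(p_{0})\geq 0$. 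The other orientation is symmetric.

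The core of the argument is a contradiction on $\varphi$. Assume $\varphi$ attains its minimum at $p_{0}$ with $\varphi(p_{0})\leq 0$. When $q(p_{0})\neq 0$: combining $\Delta_{\sigma}\varphi(p_{0})\geq 0$, $\varphi(p_{0})\leq 0$, $\dot{\psi}_{2}\geq\dot{\psi}_{1}\geq 0$ at $p_{0}$ and a short algebraic manipulation yields $(\delta-\epsilon)\dot{\psi}_{1}(p_{0})\geq\delta(p_{0})/t_{0}>0$, hence $\delta(p_{0})>\epsilon(p_{0})$, i.e., $\|t_{0}q\|_{\sigma}^{2}(p_{0})>e^{2\psi_{1}+2\psi_{2}}(p_{0})$. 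But the estimate $e^{4\psi_{2}-4\psi_{1}}\|t_{0}q\|_{\sigma}^{2}\leq 1$ proved inside Proposition \ref{prop:sub_ineq} gives $\|t_{0}q\|_{\sigma}^{2}\leq e^{4\psi_{1}-4\psi_{2}}$, and combining yields $\psi_{1}>3\psi_{2}$ at $p_{0}$, contradicting Lemma \ref{lm:bound}. When $q(p_{0})=0$: the same inequality forces $\varphi(p_{0})=\dot{\psi}_{1}(p_{0})=\dot{\psi}_{2}(p_{0})=0$, so $\varphi\geq 0$ on $S$; the second linearized equation $(\Delta_{\sigma}-2\epsilon)\dot{\psi}_{2}=-\gamma\varphi\leq 0$ combined with $\dot{\psi}_{2}\geq 0$ and $\dot{\psi}_{2}(p_{0})=0$ triggers the strong maximum principle to give $\dot{\psi}_{2}\equiv 0$; then $\varphi\equiv 0$, $\dot{\psi}_{1}\equiv 0$, and the first linearized equation forces $q\equiv 0$, a contradiction. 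The main obstacle is the first case: it closes only because the bounds of Proposition \ref{prop:sub_ineq} and Lemma \ref{lm:bound} together are just sharp enough to prevent $\delta$ from exceeding $\epsilon$ at any candidate minimum of $\varphi$.
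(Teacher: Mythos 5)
Your proof is correct, but it takes a genuinely different route from the paper. The paper disposes of this lemma in two lines: it invokes the pointwise monotonicity $g_{t_1}<g_{t_2}$ for $0<t_1<t_2$ from \cite[Proposition 4.1]{QL_cyclic}, so that $t\mapsto(\psi_1^t-\psi_2^t)(p)$ is increasing and smooth, hence has nonnegative derivative. You instead give a self-contained maximum-principle argument on the linearized system, which is more work but buys two things: it does not outsource the key monotonicity to an external reference, and it directly yields the \emph{strict} inequality $\dot\psi_1^{t_0}-\dot\psi_2^{t_0}>0$ at every point (an increasing smooth function can a priori have vanishing derivative at isolated parameters, so the paper's argument as literally written only gives $\geq 0$; strictness is what the proof of Theorem~\ref{thmC} actually uses). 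I checked your linearization and both maximum-principle steps and they close. Two small points deserve a line of justification in a final write-up. First, in the preliminary step, the claim that $\delta(p_0)=0$ makes $p_0$ ``simultaneously a minimum of $\dot\psi_2$'' is true but not immediate: from $0\le\gamma(\dot\psi_1-\dot\psi_2)(p_0)$ together with $\dot\psi_2(p_0)\ge\dot\psi_1(p_0)$ you get $\dot\psi_1(p_0)=\dot\psi_2(p_0)$, and since this common value is the global minimum of the pair it is in particular $\min\dot\psi_2$. Second, your contradiction in the main case rests on the lower bound $3\psi_2-\psi_1\ge 0$ of Lemma~\ref{lm:bound}; that bound is part of the lemma's statement (attributed to \cite{QL_cyclic}), but note that the displayed proof in the paper only establishes the upper bound, so your argument, like the paper's, ultimately leans on \cite{QL_cyclic} at one point --- just a different result from it.
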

\begin{proof} It is shown in \cite[Proposition 4.1]{QL_cyclic} that, if $0<t_{1}<t_{2}$, then $g_{t_{1}}<g_{t_{2}}$ pointwise. At any point $p\in S$, the function $(\psi_{1}^{t}-\psi_{2}^{t})(p)$ is then increasing for $t\in \R^{+}$ and smooth by Lemma \ref{lm:smooth_dependence}, thus it has positive first derivative.
\end{proof}

\begin{proof}[Proof of Theorem \ref{thmC}] For every unit tangent vector $v$, we have
\[
    \frac{d}{dt}g_{t}(v,v)_{|_{t=t_{0}}}=4e^{\dot{\psi}_{1}^{t_{0}}-\dot{\psi}_{2}^{t_{0}}}g_{t_{0}}(v,v)>0 
\]
by Lemma \ref{lm:monotonicity}. Because the induced metrics $g_{t}$ on the maximal surfaces are negatively curved by Proposition \ref{prop:neg_curv}, we can apply Theorem \ref{thm:variation_entropy} and deduce that the volume entropy of the maximal surfaces strictly decreases along a ray, since the Bowen-Margulis measure is positive on all non-empty open sets.
\end{proof}

\bibliographystyle{alpha}
\bibliography{bs-bibliography}

\newcommand{\etalchar}[1]{$^{#1}$}
\begin{thebibliography}{LGMM94}

\bibitem[Ama76]{Amann}
Herbert Amann.
\newblock Fixed point equations and nonlinear eigenvalue problems in ordered
  {B}anach spaces.
\newblock {\em SIAM Rev.}, 18(4):620--709, 1976.

\bibitem[Bar09]{Baraglia_thesis}
David Baraglia.
\newblock G2 geometry and integrable systems.
\newblock {\em arXiv:1002.1767}, 2009.

\bibitem[Bar15]{Baraglia_cyclic}
David Baraglia.
\newblock Cyclic {H}iggs bundles and the affine {T}oda equations.
\newblock {\em Geom. Dedicata}, 174:25--42, 2015.

\bibitem[BCG{\etalchar{+}}19]{strata}
Matt Bainbridge, Dawei Chen, Quentin Gendron, Samuel Grushevsky, and Martin
  M\"{o}ller.
\newblock Strata of {$k$}-differentials.
\newblock {\em Algebr. Geom.}, 6(2):196--233, 2019.

\bibitem[BIPP17]{BIPP}
Marc Burger, Alessandra Iozzi, Anne Parreau, and Maria~Beatrice Pozzetti.
\newblock A structure theorem for geodesic currents and length spectrum
  compactifications.
\newblock {\em arXiv:1710.07060}, 2017.

\bibitem[BIPP20]{burger2020real}
Marc Burger, Alessandra Iozzi, Anne Parreau, and Marie~Beatrice Pozzetti.
\newblock The real spectrum compactification of character varieties:
  characterizations and applications.
\newblock {\em arXiv preprint arXiv:2005.00120}, 2020.

\bibitem[BL18]{Rigidity_flat}
Anja Bankovic and Christopher~J. Leininger.
\newblock Marked-length-spectral rigidity for flat metrics.
\newblock {\em Trans. Amer. Math. Soc.}, 370(3):1867--1884, 2018.

\bibitem[Bon86]{bonahonbouts}
Francis Bonahon.
\newblock Bouts des vari\'et\'es hyperboliques de dimension {$3$}.
\newblock {\em Ann. of Math. (2)}, 124(1):71--158, 1986.

\bibitem[Bon88]{Bonahon_currents}
Francis Bonahon.
\newblock The geometry of {T}eichm\"{u}ller space via geodesic currents.
\newblock {\em Invent. Math.}, 92(1):139--162, 1988.

\bibitem[Col16]{Collier_thesis}
Brian Collier.
\newblock {\em Finite order automorphisms of {H}iggs {B}sundles: {T}heory and
  application}.
\newblock ProQuest LLC, Ann Arbor, MI, 2016.
\newblock Thesis (Ph.D.)--University of Illinois at Urbana-Champaign.

\bibitem[Cor88]{Corlette}
Kevin Corlette.
\newblock Flat {$G$}-bundles with canonical metrics.
\newblock {\em J. Differential Geom.}, 28(3):361--382, 1988.

\bibitem[CTT19]{BTT}
Brian Collier, Nicolas Tholozan, and J\'{e}r\'{e}my Toulisse.
\newblock The geometry of maximal representations of surface groups into {${\rm
  SO}_0(2,n)$}.
\newblock {\em Duke Math. J.}, 168(15):2873--2949, 2019.

\bibitem[DL20]{QL_cyclic}
Song Dai and Qiongling Li.
\newblock On cyclic {H}iggs bundles.
\newblock {\em Math. Ann.}, 376(3-4):1225--1260, 2020.

\bibitem[DLR10]{DLR_flat}
Moon Duchin, Christopher~J. Leininger, and Kasra Rafi.
\newblock Length spectra and degeneration of flat metrics.
\newblock {\em Invent. Math.}, 182(2):231--277, 2010.

\bibitem[Don87]{Donaldson_selfdual}
S.~K. Donaldson.
\newblock Twisted harmonic maps and the self-duality equations.
\newblock {\em Proc. London Math. Soc. (3)}, 55(1):127--131, 1987.

\bibitem[Fra12]{Frazier_thesis}
Jeffrey Frazier.
\newblock Length spectral rigidity of non-positively curved surfaces.
\newblock {\em arXiv:1207.6157}, 2012.

\bibitem[Gui08]{Guichard_Hitchin}
Olivier Guichard.
\newblock Composantes de {H}itchin et repr\'{e}sentations hyperconvexes de
  groupes de surface.
\newblock {\em J. Differential Geom.}, 80(3):391--431, 2008.

\bibitem[Gui18]{guichard2018introduction}
Olivier Guichard.
\newblock An introduction to the differential geometry of flat bundles and of
  higgs bundles.
\newblock {\em The Geometry, Topology and Physics of Moduli Spaces of Higgs
  Bundles, Lect. Notes Ser. Inst. Math. Sci. Natl. Univ. Singap}, 36:1--63,
  2018.

\bibitem[GW08]{GW_PSL4}
Olivier Guichard and Anna Wienhard.
\newblock Convex foliated projective structures and the {H}itchin component for
  {${\rm PSL}_4({\bf R})$}.
\newblock {\em Duke Math. J.}, 144(3):381--445, 2008.

\bibitem[Hit87]{Hitchin_selfdual}
N.~J. Hitchin.
\newblock The self-duality equations on a {R}iemann surface.
\newblock {\em Proc. London Math. Soc. (3)}, 55(1):59--126, 1987.

\bibitem[Hit92]{Hitchin_Teichmuller}
N.~J. Hitchin.
\newblock Lie groups and {T}eichm\"{u}ller space.
\newblock {\em Topology}, 31(3):449--473, 1992.

\bibitem[HP97]{Hers_Paulin}
Sa'ar Hersonsky and Fr\'{e}d\'{e}ric Paulin.
\newblock On the rigidity of discrete isometry groups of negatively curved
  spaces.
\newblock {\em Comment. Math. Helv.}, 72(3):349--388, 1997.

\bibitem[KKW91]{variation_entropy}
Anatole Katok, Gerhard Knieper, and Howard Weiss.
\newblock Formulas for the derivative and critical points of topological
  entropy for {A}nosov and geodesic flows.
\newblock {\em Comm. Math. Phys.}, 138(1):19--31, 1991.

\bibitem[Lab17]{Labourie_cyclic}
Fran\c{c}ois Labourie.
\newblock Cyclic surfaces and {H}itchin components in rank 2.
\newblock {\em Ann. of Math. (2)}, 185(1):1--58, 2017.

\bibitem[LGMM94]{lopez-gomez1994}
Julián López-Gómez and Marcela Molina-Meyer.
\newblock The maximum principle for cooperative weakly coupled elliptic systems
  and some applications.
\newblock {\em Differential Integral Equations}, 7(2):383--398, 1994.

\bibitem[Li19]{QL_survey}
Qiongling Li.
\newblock An introduction to higgs bundles via harmonic maps.
\newblock {\em SIGMA. Symmetry, Integrability and Geometry: Methods and
  Applications}, 15:035, 2019.

\bibitem[Lof10]{loftin2010survey}
J~Loftin.
\newblock Survey on affine spheres, handbook of geometric analysis, no. 2,
  2010.

\bibitem[LTW20]{LTW}
Fran\c{c}ois Labourie, J\'er\'emy Toulisse, and Michael Wolf.
\newblock Maximal surfaces in pseudo-hyperbolic spaces.
\newblock {\em arXiv:2006.12190}, 2020.

\bibitem[LW18]{LW_Fuchsianlocus}
Fran\c{c}ois Labourie and Richard Wentworth.
\newblock Variations along the {F}uchsian locus.
\newblock {\em Ann. Sci. \'{E}c. Norm. Sup\'{e}r. (4)}, 51(2):487--547, 2018.

\bibitem[Mar19]{martone2019sequences}
Giuseppe Martone.
\newblock Sequences of hitchin representations of tree-type.
\newblock {\em arXiv preprint arXiv:1910.13585}, 2019.

\bibitem[McM89]{McMullen_Poincareseries}
Curt McMullen.
\newblock Amenability, {P}oincar\'{e} series and quasiconformal maps.
\newblock {\em Invent. Math.}, 97(1):95--127, 1989.

\bibitem[MZ20]{Rafe_flat}
Rafe Mazzeo and Xuwen Zhu.
\newblock Conical metrics on {R}iemann surfaces {I}: {T}he compactified
  configuration space and regularity.
\newblock {\em Geom. Topol.}, 24(1):309--372, 2020.

\bibitem[Nie15]{Nie_rays}
Xin Nie.
\newblock Entropy degeneration of convex projective surfaces.
\newblock {\em Conform. Geom. Dyn.}, 19:318--322, 2015.

\bibitem[OT20]{OT}
Charles Ouyang and Andrea Tamburelli.
\newblock Limits of blaschke metrics.
\newblock {\em arXiv:1911.02119. To appear in Duke Math. J.}, 2020.

\bibitem[Ota90]{Otal}
Jean-Pierre Otal.
\newblock Le spectre marqu\'{e} des longueurs des surfaces \`a courbure
  n\'{e}gative.
\newblock {\em Ann. of Math. (2)}, 131(1):151--162, 1990.

\bibitem[Ouy19]{Charles_dPSL}
Charles Ouyang.
\newblock High energy harmonic maps and degeneration of minimal surfaces.
\newblock {\em arXiv:1910.06999}, 2019.

\bibitem[Sch93]{Schoenharmonic}
Richard~M. Schoen.
\newblock The role of harmonic mappings in rigidity and deformation problems.
\newblock In {\em Complex geometry ({O}saka, 1990)}, volume 143 of {\em Lecture
  Notes in Pure and Appl. Math.}, pages 179--200. Dekker, New York, 1993.

\bibitem[Sim92]{Simpson_Higgs}
Carlos~T. Simpson.
\newblock Higgs bundles and local systems.
\newblock {\em Inst. Hautes \'{E}tudes Sci. Publ. Math.}, (75):5--95, 1992.

\bibitem[Tam20]{Tambu_rays}
Andrea Tamburelli.
\newblock Degeneration of globally hyperbolic maximal anti-de {S}itter
  structures along rays.
\newblock {\em arXiv:1710.05827. To appear in Comm. Anal. Geom.}, 2020+.

\bibitem[TW20]{TW}
Andrea Tamburelli and Michael Wolf.
\newblock Planar minimal surfaces with polynomial growth in the
  {S}p(4,{R})-symmetric space.
\newblock {\em arXiv:2002.07295}, 2020.

\bibitem[Wen16]{wentworth2016higgs}
Richard Wentworth.
\newblock Higgs bundles and local systems on riemann surfaces.
\newblock In {\em Geometry and quantization of moduli spaces}, pages 165--219.
  Springer, 2016.

\bibitem[Wie18]{Wienhard_ICM}
Anna Wienhard.
\newblock An invitation to higher \text{T}eichm\"uller theory.
\newblock {\em Proc. Int. Cong. of Math. Rio de Janeiro 1}, pages 1007--1034,
  2018.

\bibitem[Wol89]{Wolf_harmonic}
Michael Wolf.
\newblock The {T}eichm\"uller theory of harmonic maps.
\newblock {\em J. Differential Geom.}, 29(2):449--479, 1989.

\end{thebibliography}

\bigskip
\noindent \footnotesize \textsc{Department of Mathematics and Statistics, University of Massachusetts, Amherst}\\
\emph{E-mail address:}  \verb|ouyang@math.umass.edu|

\bigskip
\noindent \footnotesize \textsc{Department of Mathematics, Rice University}\\
\emph{E-mail address:} \verb|andrea_tamburelli@libero.it|

\end{document}